\documentclass[a4paper,11pt]{article}

\usepackage{enumerate}
\usepackage{caption}

\usepackage{amsmath}
\usepackage{amsthm}
\usepackage{mathrsfs}
\usepackage{epsfig}
\usepackage{verbatim}
\usepackage[T1]{fontenc}
\usepackage{amsfonts}
\usepackage{float}
\usepackage{caption}
\usepackage{graphicx}
\usepackage{color}
\usepackage{fullpage}

\usepackage[latin2]{inputenc}
\usepackage{url}

\newtheorem{theorem}{Theorem}[section]

\newtheorem{proposition}[theorem]{Proposition}
\newtheorem{lemma}[theorem]{Lemma}
\newtheorem{corollary}[theorem]{Corollary}
\newtheorem{definition}[theorem]{Definition}

\newtheorem{assumptions}[theorem]{Assumptions}

\newtheorem{remark}[theorem]{Remark}

\makeindex

\begin{document}
\title{Transfer of energy in Camassa-Holm and related models by use of nonunique characteristics \\
\small{}}
\author{{Grzegorz Jamr\'oz} \\
{\it \small Institute of Mathematics, University of Basel, Spiegelgasse 1,
4051 Basel, Switzerland}\\ {\it \small  Institute of Mathematics, Polish Academy of Sciences, ul. \'Sniadeckich 8, 00-656 Warszawa, Poland}\\
{\it \small e-mail: jamroz@impan.pl}
}

\maketitle

\abstract{We study the propagation of energy density in finite-energy weak solutions of the Camassa-Holm and related equations. Developing the methods based on generalized nonunique characteristics, we show that the parts of energy related to positive and negative slopes are one-sided weakly continuous and of bounded variation, which allows us to define certain measures of dissipation of both parts of energy. The result is a step towards the open problem of uniqueness of dissipative solutions of the Camassa-Holm equation.}
\newline \, \\
{\bf Keywords:}  Camassa-Holm, dissipative solution, uniqueness, generalized characteristics\\
{\bf MSC Classfication 2010:} 35L65, 37K10

\section{Introduction}
\label{Sec_Intro}
The Camassa-Holm equation, 

\begin{equation}
\label{Eq_CHoriginal}
u_t - u_{xxt} + 3uu_x = 2u_x u_{xx} + uu_{xxx},
\end{equation}
introduced in \cite{CH}, is an important model of fluid dynamics, which describes water waves in shallow canals. Here, $t \ge 0$ denotes time, $x \in \mathbb{R}$ is the space variable and $u(t,x): [0, \infty) \times \mathbb{R} \to \mathbb{R}$ is the 
horizontal velocity of water surface at time $t$ and position $x$ or, asymptotically equivalently, height of water surface over a flat bed, see \cite{CL} for a detailed discussion.
The key feature of the Camassa-Holm equation, which has brought about considerable interest of both physical and mathematical communities, is the fact that it accounts both for solitons (see \cite{CH}) and wave breaking phenomena (\cite{CE3}). In contrast, the celebrated KdV equation, used for modeling  similar phenomena, admits solitons, however preserves smoothness of solutions, thus prohibiting wave breaking (see e.g. \cite{KPV}). Solitons in the Camassa-Holm equation are peaked with an angle at their crest and for this reason are called peakons. Importantly, peaked waves are encountered also in the context of irrotational solutions of the Euler equations (whose approximation is \eqref{Eq_CHoriginal}), where they are called \emph{extreme Stokes waves} (see \cite{CIMA} and references therein). This and the fact that peakons as soliton solutions of \eqref{Eq_CHoriginal} are stable (and thus in principle observable, see \cite{CStrauss,Len0}) under small perturbations of their shape  provides further  rationale for the feasibility of the Camassa-Holm model as approximation of the Euler equations of inviscid fluid dynamics.

Well-posedness theory for Camassa-Holm was initiated in \cite{CE1}, where Constantin and Escher proved local well-posedness of \eqref{Eq_CHoriginal} in $H^3(\mathbb{R})$, showing however that, as hinted at in \cite{CH}, for a large class of initial data -- antisymmetric with $u_x(t=0,x=0)<0$ -- the solution blows up in finite time in such a way that $u_x(t,0) \to -\infty$ as $t \to T_{max}$. This type of blow-up has a clear physical interpretation of a wave profile, whose slope becomes steeper and steeper leading eventually to breaking in finite time.  On the other hand, in \cite{CE2} the same authors provided relatively general conditions for global existence of smooth solutions.

For smooth solutions, the Camassa-Holm equation can equivalently be expressed, by applying the operator $(I-\partial_{xx})^{-1}$ to \eqref{Eq_CHoriginal}, in the conservative form 
\begin{eqnarray}
\partial_t u + \partial_x (u^2 / 2) + P_x &=& 0, \label{eq_weakCH1}\\
P &=& \frac 1 2 e^{-|x|}* \left(u^2 + \frac {u_x^2}{2}\right),\\
u(t=0, \cdot) &=& u_0. \label{eq_weakCH3}
\end{eqnarray}

Formulation \eqref{eq_weakCH1}-\eqref{eq_weakCH3} is very convenient for studying weak solutions, whose introduction is indispensable if one wants to encompass the behaviour of a wave after the breaking time. It is also necessary to capture the non-smooth solitons (which are called peakons), see e.g. \cite{BC}. 
Indeed, one can see, that for general wave profiles with finite energy  ($u \in L^{\infty}([0,\infty),H^1(\mathbb{R}$))), the terms $u_x u_{xx}$ and $uu_{xxx}$ in \eqref{Eq_CHoriginal} are a priori ill-defined as distributions, whereas \eqref{eq_weakCH1}-\eqref{eq_weakCH3} makes perfect sense in this class of regularity. 

Using formulation \eqref{eq_weakCH1}-\eqref{eq_weakCH3} Xin and Zhang proved the following theorem.
\begin{theorem}(Existence of weak solutions, Xin-Zhang, \cite{XZ}). Assume that $u_0 \in H^1(\mathbb{R})$. Then the Cauchy problem \eqref{eq_weakCH1}-\eqref{eq_weakCH3} has an admissible weak solution $u = u(t,x)$ in the sense that
\begin{itemize}
\item $u(t,x) \in C([0,\infty) \times \mathbb{R}) \cap L^{\infty}([0,\infty), H^1(\mathbb{R})),$ 
\item $\|u(t,\cdot)\|_{H^1(\mathbb{R})} \le  \|u(0,\cdot)\|_{H^1(\mathbb{R})}$ for every $t > 0$,
\item $u(t,x)$ satisfies \eqref{eq_weakCH1}-\eqref{eq_weakCH3} in the sense of distributions.
\end{itemize}
\end{theorem}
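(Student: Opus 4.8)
The plan is to prove existence of admissible weak solutions by a vanishing viscosity approximation together with compactness arguments, following the Xin--Zhang strategy. I would regularize the conservative system \eqref{eq_weakCH1}--\eqref{eq_weakCH3} by adding a parabolic term, studying
\begin{equation}
\partial_t u^\varepsilon + \partial_x\bigl((u^\varepsilon)^2/2\bigr) + P_x^\varepsilon = \varepsilon\, \partial_{xx} u^\varepsilon,
\end{equation}
with $P^\varepsilon = \tfrac12 e^{-|x|} * \bigl((u^\varepsilon)^2 + (u_x^\varepsilon)^2/2\bigr)$ and mollified initial data $u_0^\varepsilon \to u_0$ in $H^1$. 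For fixed $\varepsilon>0$ this is a semilinear parabolic equation with a nonlocal source; standard parabolic theory gives a smooth global-in-time solution $u^\varepsilon$, so the first block of work is purely to set up these approximants and record their regularity.

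The heart of the matter is obtaining $\varepsilon$-uniform a priori bounds. First I would derive the energy estimate: multiplying the regularized equation by $u^\varepsilon$ and by $-u_{xx}^\varepsilon$ and integrating, the convolution structure of $P^\varepsilon$ makes the convective and pressure terms cancel in the $H^1$ energy, leaving a dissipative viscous term of a fixed sign. This yields $\|u^\varepsilon(t,\cdot)\|_{H^1} \le \|u_0^\varepsilon\|_{H^1} \le C$ uniformly in $\varepsilon$ and $t$, which delivers both the $L^\infty([0,\infty),H^1)$ bound and the energy-decay inequality asserted in the theorem. From the uniform $H^1$ bound and Morrey/Sobolev embedding in one dimension I get a uniform $L^\infty$ bound and uniform Hölder-$\tfrac12$ continuity in $x$. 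The pressure $P^\varepsilon$ and $P^\varepsilon_x$ are then uniformly bounded in $L^\infty$ by Young's inequality for convolutions, and one reads off from the equation a uniform bound on $\partial_t u^\varepsilon$ in, say, $H^{-1}$ locally, giving equicontinuity in time.

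With spatial Hölder continuity and temporal equicontinuity in hand, an Arzel\`a--Ascoli argument produces a subsequence converging locally uniformly to a limit $u \in C([0,\infty)\times\mathbb{R}) \cap L^\infty([0,\infty),H^1)$, and weak-$*$ lower semicontinuity of the $H^1$ norm transfers the energy inequality to $u$. The genuinely hard step is passing to the limit in the nonlinear terms, because uniform $H^1$ control only gives \emph{weak} convergence of the derivatives $u_x^\varepsilon$, so that the quadratic expressions $(u_x^\varepsilon)^2$ appearing in $P^\varepsilon$ need not converge to $(u_x)^2$; this is exactly where energy can concentrate and where the admissibility (one-sided energy) subtleties of the whole paper originate. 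I would resolve this by establishing a one-sided (Oleinik-type) bound on $u_x^\varepsilon$ — differentiating the equation for $u_x^\varepsilon$ and using $P^\varepsilon \ge \tfrac12 (u^\varepsilon)^2$ to control the sign of the quadratic self-interaction — which upgrades the weak convergence of $(u_x^\varepsilon)^2$ to a usable form and allows the convolution terms to pass to the limit. The viscous term $\varepsilon\,\partial_{xx}u^\varepsilon$ vanishes in the distributional formulation as $\varepsilon \to 0$ since $\varepsilon\,u_x^\varepsilon \to 0$ in $L^2$ by the uniform $H^1$ bound, and collecting these limits shows the limit $u$ satisfies \eqref{eq_weakCH1}--\eqref{eq_weakCH3} in distributions, completing the three asserted properties.
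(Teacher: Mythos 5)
The paper itself does not prove this theorem: it is quoted from Xin--Zhang \cite{XZ}, so your proposal can only be measured against the strategy of that cited work. You have correctly reconstructed its outline, and the first two-thirds of your sketch are sound: the viscous regularization, the exact cancellation of the convective and pressure terms when the equation is tested against $u^\varepsilon - u^\varepsilon_{xx}$ (giving the $\varepsilon$-uniform $H^1$ decay), the spatial H\"older and temporal $H^{-1}$ equicontinuity, Arzel\`a--Ascoli, and weak lower semicontinuity for the energy inequality are all standard and correct, and they match how \cite{XZ} begins.

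The genuine gap is the final step, where you pass to the limit in $P^\varepsilon$. You assert that a one-sided Oleinik bound $u_x^\varepsilon \le C(1+1/t)$ ``upgrades the weak convergence of $(u_x^\varepsilon)^2$ to a usable form,'' but this implication is false as stated: the Oleinik estimate controls only the \emph{positive} part of the slope, whereas the obstruction to the limit passage is concentration of $((u_x^\varepsilon)^-)^2$, i.e.\ the wave-breaking scenario in which negative slopes blow up in $L^\infty$ while remaining bounded in $L^2$ --- precisely the phenomenon illustrated by the peakon--antipeakon example in Section \ref{Sec_Intro}, which is fully compatible with the one-sided bound. A priori one only knows $(u_x^\varepsilon)^2 \rightharpoonup u_x^2 + \mu$ for some nonnegative defect measure $\mu$, and if $\mu \neq 0$ the limit $u$ solves \eqref{eq_weakCH1} with $P$ replaced by $P + \frac 1 4 e^{-|x|}*\mu$, hence is \emph{not} a weak solution of \eqref{eq_weakCH1}--\eqref{eq_weakCH3}. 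Closing this gap is the bulk of \cite{XZ}: it requires a uniform higher-integrability estimate ($u_x^\varepsilon$ bounded in $L^p_{loc}$ for some $p>2$) and a Young-measure/renormalization argument on the transport--diffusion equation satisfied by $q^\varepsilon = u_x^\varepsilon$ to show that the defect vanishes; alternatively, as the paper remarks immediately after the theorem, Coclite and Karlsen \cite{CocK} obtain strong compactness of the nonlinear terms directly, without any use of \eqref{eq_Oleinik}. Without one of these arguments your sketch does not establish the third bullet of the theorem.
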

\noindent Solutions constructed by Xin and Zhang by the method of vanishing viscosity satisfy
\begin{equation}
\label{eq_Oleinik}
\partial_x u(t,x) \le const \left(1+ \frac 1 t\right),
\end{equation}
which is reminiscent of the Oleinik entropy condition in conservation laws (see e.g. \cite[Section 3b]{EVANS}). Interestingly, direct use of \eqref{eq_Oleinik} in the proof of existence is not necessary, since (see \cite{CocK}) the vanishing viscosity method yields in fact strong compactness of the nonlinear terms. Admissible weak solutions of Camassa-Holm equation satisfying additionally \eqref{eq_Oleinik}  are called {\bf dissipative}. Uniqueness of dissipative solutions is an outstanding open problem. 

In \cite{BC2} a unique global semigroup of dissipative solutions was constructed, based on a special transformation of variables leading to a system of ordinary differential equations. As remarked by the authors, this does not provide uniqueness of solutions, since a different constructive procedure may lead to distinct dissipative solutions. Similar approach, based however on a transformation to Lagrangian variables, was followed by Holden and Raynaud in \cite{HR}.
On the other hand, uniqueness of the related class of conservative weak solutions, i.e. solutions conserving locally the energy (see e.g. \cite{BC}) was proven in \cite{BF}, based on construction of a Lipschitz metric for the semigroup of solutions. Analogous strategy  in Lagrangian variables was used by Grunert, Holden and Raynaud in \cite{HR2,GHR, GHR1} and a proof using generalized characteristics has been recently presented by Bressan, Chen and Zhang in \cite{BCZ}. Let us here also mention a recent paper \cite{GHR2}, where another special class of weak solutions, which interpolates between conservative and dissipative solutions was introduced and studied. Finally, let us note that one can study various limits of solutions of the Camassa-Holm equation, such as e.g. the convergence of solutions of \eqref{Eq_CHoriginal} to the entropy solutions of Burgers equation in the vanishing diffusion limit, see the recent paper by Coclite and di Ruvo \cite{CocR} and references therein.

Our approach is different from those in the above mentioned papers. Namely, we study \emph{arbitrary} admissible weak solutions of the Camassa-Holm equation without any further assumptions such as conservativeness or dissipativity. Denoting $f^+:= \max(f,0)$ and $f^- := \max(-f,0)$, we show that the 'square of derivative density' (SDD), $u_x^2$ (equal to the energy density, $u_x^2 + u^2$, up to the regular term $u^2$)
can be decomposed into the positive part, $(u_x^+)^2$ and negative part $(u_x^-)^2$, which satisfy:
\begin{itemize}
\item function $t \mapsto (u_x^+(t,\cdot))^2$ is weakly ladcag and of bounded variation, 
\item function $t \mapsto (u_x^-(t,\cdot))^2$ is weakly cadlag and of bounded variation.
\end{itemize} 
Here, 'ladcag' denotes functions which are left continuous with right limits and 'cadlag' -- right continuous with left limits.

This allows us to define the discrete (in time) part of the time-dependent dissipation/accretion measure for, separately, negative SDD and positive SDD by  
\begin{equation*}
\int_{\mathbb{R}} \phi(x) d\mu^+(t,dx) :=  \lim_{s \to t^+} \int_{\mathbb{R}} \phi(x) (u_x^+(s,x))^2 dx  -  \int_{\mathbb{R}} \phi(x) (u_x^+(t,x))^2 dx,
\end{equation*}
and 
\begin{equation*}
\int_{\mathbb{R}} \phi(x) d\mu^-(t,dx) :=  \int_{\mathbb{R}} \phi(x) (u_x^-(t,x))^2 dx - \lim_{s \to t^-} \int_{\mathbb{R}} \phi(x) (u_x^-(s,x))^2 dx,
\end{equation*}
where $\phi \in C_c(\mathbb{R})$ is an arbitrary continuous compactly supported test function. 
These two measures might be useful in obtaining better insights into the structure of solutions and thus approaching the uniqueness issues. The structure and properties of  measures $\mu^{\pm}$ as well as their relation to dissipativity are subject of a forthcoming paper, \cite{GJ}. Here let us only consider for illustration the example of conservative \emph{peakon-antipeakon} interaction. Namely, it turns out 
that (see \cite{BC} for a detailed discussion) the function of the form:
\begin{equation}
\label{Eq_upeakon}
u(t,x) = p_1(t) e^{-|x-q_1(t)|} - p_1(t)e^{-|x+q_1(t)|}
\end{equation}
where $q(0)<0, p_1(0)>0, p_1(t)= \frac 1 2 p(t), q_1(t) = \frac 1 2 q(t)$ and 
\begin{eqnarray*}
p(t) &=& H_0 \frac {[p(0) + H_0]+ [p(0) - H_0]e^{H_0 t}}{[p(0)+H_0] - [p(0)-H_0]e^{H_0 t}}, \\
q(t) &=& q(0) - 2 \log \frac {[p(0)+H_0]e^{-H_0t \slash 2} + [p(0) - H_0]e^{H_0t \slash 2}}{2p(0)},\\
H_0^2 &=& p(0)^2 (1 - e^{q(0)}) 
\end{eqnarray*}
is a weak solution of the Camassa-Holm equation. This solution has a singularity at time $T = \frac 1 {H_0} \log \frac{p(0)+H_0}{p(0)-H_0}$, which is characterized by 
\begin{eqnarray*}
\lim_{t \to T^-} \sup_x |u(t,x)| = 0
\end{eqnarray*}
and
\begin{eqnarray*}
\lim_{t \to T^-} e(t) = \lim_{t \to T^-} \frac 1 2 (u^2(t,x) + u_x^2(t,x)) = \lim_{t \to T^-} \frac 1 2  ((u_x^-)^2(t,x)) = H_0^2\delta_0(dx),
\end{eqnarray*}
where the limits are taken in the weak sense and $e(t)$ is the energy density. The solution can be prolonged beyond the blow-up time $T$ in a conservative way, i.e. so that it conserves locally the energy, by setting for $t>T$ 
$$u(t,x) = -u(2T- t,x).$$
We easily obtain then
\begin{eqnarray*}
\lim_{t \to T^+} e(t) = \lim_{t \to T^+} \frac 1 2 (u^2(t,x) + u_x^2(t,x))dx = \lim_{t \to T^+} \frac 1 2  ((u_x^+)^2(t,x))dx = H_0^2\delta_0(dx).
\end{eqnarray*}

Thus, the SDD of \eqref{Eq_upeakon} is a.e. weakly continuous (in fact it is continuous except for $t=0$, where it vanishes), which, however, does not imply that the discrete parts of dissipation/accretion measures vanish. Indeed, $\mu^+ = 2H_0^2\delta_0(dx)\delta_0(dt)$ and $\mu^- = -2H_0^2\delta_{0}(dx)\delta_0(dt)$, which means that at time $t=0$ the whole SDD is transferred from negative SDD to positive SDD as a singular package in $x=0$.
In this paper we show that \emph{any weak} solution of the Camassa-Holm equation, not necessarily the conservative one, can be studied in a similar fashion.

Another measure of dissipation can be, due to $BV$ regularity for fixed $\phi$, defined by
\begin{equation*}
\nu^+_{\phi}(dt) = \frac d {dt} \int_{\mathbb{R}} \phi(x)(u_x^+(t,x))^2 dx
\end{equation*} 
and, similarly, 
\begin{equation*}
\nu^-_{\phi}(dt) = \frac d {dt} \int_{\mathbb{R}} \phi(x)(u_x^-(t,x))^2 dx.
\end{equation*} 

Measures, $\nu_{\phi}^+$ and $\nu_{\phi}^-$ can, in contrast to measures $\mu^+, \mu^-$, account also for a continuous in time dissipation/accretion of energy, averaged out by function $\phi$. The examples of such continuous transfer of energy are stumpons and other travelling waves  considered in \cite{Len}. Let us again emphasize that, as we demonstrate in this paper, measures $\nu_{\phi}^+$, $\nu_{\phi}^-$ exist for \emph{any} weak solution of the Camassa-Holm equation.

To prove our results, we develop the theory of nonunique characteristics, initiated in the context of conservation laws by C. Dafermos, \cite{DafGC} and applied by him to the  Hunter-Saxton equation, \cite{DafCont, DafHS, DafMax}. This theory was then further developed by T. Cie\'{s}lak and the author in \cite{TCGJ} which led us to positive verification of the hypothesis of Zhang and Zheng, see \cite{ZhangZheng}, stating that dissipative solutions of the Hunter-Saxton equation dissipate energy at the highest possible rate. 
A similar question is also valid for the Camassa-Holm: 
\\ \, 

\emph{Does the principle of maximal energy/entropy dissipation (see \cite{DafMAX}) select the (unique) dissipative solution of the Camassa-Holm equation?}
\\ \, \\
 This question is widely open, not least due to lack of proof of uniqueness of dissipative solutions, but also because of technical complexity of studying maximal dissipation of energy/entropy, see  \cite{TCGJ} for the Hunter-Saxton case.

In the present paper, basing on the framework from \cite{TCGJ} we go beyond the Hunter-Saxton equation, and develop a more general framework for studying weak solutions by use of nonunique characteristics. In particular, considering equations of the form
\begin{equation*}
\mbox{(G) } u_t + uu_x = \int_{\mathbb{R}} A(x,y)[ au^2(t,y) + bu_x^2(t,y)]dy,
\end{equation*}
we include the Camassa-Holm equation, for which, in contrast to the Hunter-Saxton equation, $u_x$ may propagate nonlocally.
The hallmarks of our framework are:
\begin{itemize}
\item making use of transport of various quantities along highly non-unique characteristics,
\item proving certain properties for \emph{every} solution in a large class of weak solutions. 
\end{itemize}
The second aspect is particularly important for approaching questions involving the whole class of weak solutions such as the above mentioned criterion of maximal dissipation of entropy/energy.

To prove the weak cadlag/ladcag/BV properties of $(u_x^+)^2$ and $(u_x^-)^2$ and, consequently, existence of $\mu^{\pm}$ and $\nu^{\pm}$ we specialize, however, to the Camassa-Holm equation. This is due to the specific nature of the estimates. Nevertheless, similar considerations should be possible for related equations of the form (G) upon suitable modification of the arguments.

\, 

{\bf Acknowledgements.} The author was supported by the Swiss Government within a Swiss Government Excellence Scholarship for Foreign Scholars and Artists for the Academic Year 2015-2016. The author is furthermore grateful to his host, Gianluca Crippa, from the University of Basel, for supporting this research.

\section{Main results}
Let $A: \mathbb{R} \times \mathbb{R} \to \mathbb{R}$ be a bounded measurable function and $a,b \in \mathbb{R}$ satisfy $a \ge 0, b>0$. We consider the general equation
\begin{equation*}
\mbox{(G) } u_t + uu_x = \int_{\mathbb{R}} A(x,y)[ au^2(t,y) + bu_x^2(t,y)]dy
\end{equation*}
with the initial condition $u(t=0,x) = u_0(x)$. 
\begin{definition}[Weak solutions]
A function $u: [0,\infty)\times \mathbb{R} \to \mathbb{R}$ is called a weak solution of (G) if 
\begin{itemize}
\item $u(t,x) \in C([0,\infty) \times \mathbb{R}),$
\item $\int_{\mathbb{R}} (au^2(t,x) + bu_x^2(t,x))dx \in L^{\infty}([0,\infty)),$ 
\item $u(t=0,x) = u_0(x)$ for $x \in \mathbb{R}$,
\item $u(t,x)$ satisfies (G) in the sense of distributions.
\end{itemize}
\end{definition}
\noindent Let us  now impose the following assumptions on $A$. 
\begin{assumptions}
\label{Ass1}
\begin{eqnarray*}
\frac {A(x_2,y) - A(x_1,y)}{x_2-x_1} &\ge& -L, 
\end{eqnarray*}
and
for every $f \in L^1(\mathbb{R})$, the mapping $x \mapsto \int_{\mathbb{R}} A(x,y)f(y)dy$ is continuous.
\end{assumptions}

\begin{remark}
Coefficients $a,b,A$ for the Camassa-Holm equation and the related Hunter-Saxton equation satisfy $a\ge 0, b>0$ and Assumptions \ref{Ass1}, see Lemma \ref{Lem_Coef}.
\end{remark}

For weak solutions of (G) Assumptions \ref{Ass1} imply, by \cite[Lemma 3.1]{DafHS} the existence of characteristics. More precisely, we have the following straightforward result.
\begin{proposition}
\label{Prop_char}
Let $u$ be a weak solution of (G) under Assumptions \ref{Ass1}. Then for every $\zeta \in \mathbb{R}$ there exists a (nonunique) characteristic of $u$ emanating from $\zeta$, i.e. a function $\zeta : [0,\infty) \to \mathbb{R}$, which satisfies:
\begin{itemize}
\item $\zeta(0)=\zeta$,
\item $\frac d {dt} \zeta(t) = u(t,\zeta(t)),$
\item $\frac d {dt} u(t,\zeta(t)) = \int_{\mathbb{R}} A(\zeta(t),y)[ au^2(t,y) + bu_x^2(t,y)]dy.$
\end{itemize}
\end{proposition}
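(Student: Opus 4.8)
The plan is to treat (G) as a balance law $u_t + u u_x = F(t,x)$ with the continuous, bounded source $F(t,x) := \int_{\mathbb{R}} A(x,y)[au^2(t,y)+bu_x^2(t,y)]\,dy$, and to construct the characteristic together with the evolution of $u$ along it by spatial mollification and compactness, following the scheme of \cite[Lemma 3.1]{DafHS}. First I would record the basic properties of $F$: since $\int(au^2+bu_x^2)\,dx$ is bounded in time and $A$ is bounded, $F$ is bounded; by the continuity hypothesis in Assumptions~\ref{Ass1}, $x\mapsto F(t,x)$ is continuous; and because $au^2+bu_x^2\ge 0$ together with the one-sided bound on $A$, the source inherits a one-sided Lipschitz estimate $\frac{F(t,x_2)-F(t,x_1)}{x_2-x_1}\ge -L'$ with $L'=L\sup_t\int(au^2+bu_x^2)\,dx$. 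This last property is the structural ingredient that makes the construction work.

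Next I would regularize, $u^\epsilon:=u(t,\cdot)*\rho_\epsilon$. Then $u^\epsilon$ is smooth in $x$, Lipschitz in $t$, and satisfies $u^\epsilon_t + u^\epsilon u^\epsilon_x = F^\epsilon + S^\epsilon$, where $F^\epsilon = F*\rho_\epsilon$ still obeys the one-sided bound and $S^\epsilon := u^\epsilon u^\epsilon_x - (u u_x)^\epsilon$ is the commutator. Since $u\in H^1_{loc}$ embeds into $C^{0,1/2}$, a Friedrichs-type estimate gives $S^\epsilon\to 0$ in $L^1_{loc}$. As $u^\epsilon$ is spatially Lipschitz, the ODE $\dot\zeta^\epsilon = u^\epsilon(t,\zeta^\epsilon)$, $\zeta^\epsilon(0)=\zeta$, has a unique solution generating a smooth flow $\eta\mapsto\zeta^\epsilon_\eta(t)$, and along it $v^\epsilon(t):=u^\epsilon(t,\zeta^\epsilon(t))$ satisfies $\dot v^\epsilon = F^\epsilon(t,\zeta^\epsilon) + S^\epsilon(t,\zeta^\epsilon)$. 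The bound $|\dot\zeta^\epsilon|\le\|u\|_\infty$ (finite for Camassa-Holm by $H^1\hookrightarrow L^\infty$, and locally in the general case) makes $\{\zeta^\epsilon\}$ equi-Lipschitz, so Arzela--Ascoli yields $\zeta^\epsilon\to\zeta^*$ uniformly on compacts, and passing to the limit in $\zeta^\epsilon(t)=\zeta+\int_0^t u^\epsilon(s,\zeta^\epsilon)\,ds$ shows $\zeta^*$ is a characteristic, i.e. the first property.

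For the second property I would integrate the equation for $v^\epsilon$ and let $\epsilon\to 0$. The left-hand side converges to $u(\tau,\zeta^*(\tau))-u_0(\zeta)$, and the $F^\epsilon$ term to $\int_0^\tau F(t,\zeta^*(t))\,dt$ by continuity of $F$ in $x$ and dominated convergence, so everything reduces to showing $\int_0^\tau S^\epsilon(t,\zeta^\epsilon(t))\,dt\to 0$. This evaluation of an $L^1_{loc}$-small quantity along a single curve is the main obstacle, and here the one-sided bound is decisive: comparing two $\epsilon$-characteristics, their separation $p=\zeta^\epsilon_2-\zeta^\epsilon_1$ satisfies $\ddot p\ge -L'p$ up to the vanishing commutator, which prevents characteristics from merging before a definite time and thus yields a lower bound $J^\epsilon(t,\eta)=\partial_\eta\zeta^\epsilon_\eta(t)\ge c>0$ on a short interval $[0,T_0]$, uniformly in $\epsilon$. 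Averaging over starting points and changing variables $x=\zeta^\epsilon_\eta(t)$ then gives $\int_I\int_0^{T_0}|S^\epsilon(t,\zeta^\epsilon_\eta)|\,dt\,d\eta\le c^{-1}\int_0^{T_0}\int_{K}|S^\epsilon(t,x)|\,dx\,dt\to 0$, so the evolution identity holds for a.e. starting point; a final Arzela--Ascoli argument over good starting points $\eta_n\to\zeta$ produces a characteristic emanating from the prescribed $\zeta$ itself for which the identity persists in the limit. Iterating from $T_0$ extends the conclusion to all of $[0,\infty)$. Alternatively, and more briefly, one may simply verify that $u$ and $F$ meet the hypotheses of \cite[Lemma 3.1]{DafHS}, namely $u$ continuous and bounded, and $F$ bounded, continuous in $x$, and one-sided Lipschitz, and invoke that lemma directly.
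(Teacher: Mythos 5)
Your closing fallback is, in fact, the paper's entire proof: Proposition \ref{Prop_char} is justified there by the single remark that, for weak solutions of (G), Assumptions \ref{Ass1} permit one to invoke \cite[Lemma 3.1]{DafHS}, and nothing more is written. Your verification of the hypotheses of that lemma is correct and is the real content of this step: the source $F(t,x)=\int_{\mathbb{R}}A(x,y)[au^2(t,y)+bu_x^2(t,y)]\,dy$ is bounded (bounded $A$ against uniformly finite energy), $x\mapsto F(t,x)$ is continuous (the second part of Assumptions \ref{Ass1} applied to the $L^1$ function $au^2(t,\cdot)+bu_x^2(t,\cdot)$), and $F$ is one-sided Lipschitz with constant $L\,\sup_t\int(au^2+bu_x^2)\,dy$ (the first part of Assumptions \ref{Ass1} together with $a\ge0$, $b>0$). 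So the short route is correct and coincides with the paper's.

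The long constructive route, by contrast, has a genuine gap at its decisive step. The one-sided Lipschitz bound on $F$ controls the relative \emph{acceleration} of two mollified characteristics, not their relative velocity, and the inequality $\ddot p\ge -L'p$ does not prevent merging: the comparison solution with $p(0)=p_0>0$ and $\dot p(0)=\omega_0p_0$ is $p_0[\cos(\sqrt{L'}\,t)+(\omega_0/\sqrt{L'})\sin(\sqrt{L'}\,t)]$, which vanishes at a time of order $1/|\omega_0|$ when $\omega_0<0$ is large. Since $H^1$ initial data have difference quotients unbounded below on every interval, there exist no $T_0>0$ and $c>0$, uniform in the starting point $\eta$ and in $\epsilon$, such that $J^\epsilon(t,\eta)\ge c$ on $[0,T_0]$; this is precisely Camassa--Holm wave breaking, which focuses characteristics in arbitrarily short time. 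Consequently the change-of-variables estimate $\int_I|S^\epsilon(t,\zeta^\epsilon_\eta(t))|\,d\eta\le c^{-1}\int_0^{T_0}\int_{K}|S^\epsilon(t,x)|\,dx\,dt$ collapses exactly where it is needed. The paper's own development confirms that this obstruction is substantive rather than technical: every Jacobian or change-of-variables argument there (Definitions \ref{Def_OI} and \ref{Def_Lt}, Propositions \ref{Prop14} and \ref{Prop_General54}) is restricted to starting points whose difference quotients are quantitatively bounded; the lower bound of Lemma \ref{Lem2} degenerates as $\omega(t_0)\to-\infty$; and the statement that the excluded starting points form a null set is the paper's main technical result (Lemma \ref{Lem5}), which moreover holds only inside the set $I_t$ of points with $u_x(0,\zeta)>\Omega(t)$, not on all of $\mathbb{R}$. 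Dafermos's lemma sidesteps all of this because its proof does not mollify the flow: it works with an arbitrary Peano solution of $\dot\zeta=u(t,\zeta)$ and exploits the spatial H\"older-$1/2$ regularity of $u$ in the weak formulation, so it needs no nondegeneracy of the flow map. The correct proof is therefore your one-sentence fallback; the mollification scheme as designed cannot be completed without first reconstructing something like Lemma \ref{Lem5}, and even then only off a null set of starting points rather than for every $\zeta\in\mathbb{R}$ as the proposition requires.
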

\noindent To introduce the technical results of this paper we need some definitions. For $\zeta, \eta \in \mathbb{R}$, $\eta \neq \zeta$ let $\zeta(t), \eta(t)$ be arbitrary characteristics emanating from $\zeta$ and $\eta$, respectively.
Define quantities
\begin{eqnarray*}
h(t) &:=& \eta(t) - \zeta(t),\\
p(t) &:=& u(t,\eta(t)) - u(t,\zeta(t)),\\
\omega(t) &:=& p(t)/h(t),\\
\end{eqnarray*}
and 
\begin{equation}
K_t(y) := \frac {A(\eta(t),y)-A(\zeta(t),y)}{h(t)}.
\label{Eq_Ktdef}
\end{equation}
Note that Assumptions \ref{Ass1} easily imply 
\begin{equation}
\label{Eq_Ktl}
K_t(y) \ge -L.
\end{equation}

In the following we will also consider sets $S_T \subset \mathbb{R}$ such that for every $\zeta \in S_T$ and $0\le t \le T$ the characteristic $\zeta(t)$ is unique and $K_t$, defined by \eqref{Eq_Ktdef}, can for $\eta > \zeta$ be decomposed as
\begin{equation}
K_t(y) = L(\zeta(t),y) + L_t^{1}(y)+L_t^{2}(y) + L_t^{3}(y),  \label{Eq_Kt}
\end{equation}
where $L$ is independent of $\eta(t)$ and for some real constants $C_1,C_2,C_3$
\begin{eqnarray}
L_t^{1}(y) &=& C_1\frac {1}{h(t)} \bold{1}_{[\zeta(t), \eta(t)]} (y), \label{Eq_L1} \\
|L_t^{2}(y)| &\le& C_2  \bold{1}_{[\zeta(t),\eta(t)]}(y),\label{Eq_L2}\\
|L_t^{3}(y)| &\le& C_3 h(t) \label{Eq_L3}. 
\end{eqnarray}
Our first result shows that not only $u$, but also $u_x$ evolves along characteristics.
\begin{proposition} 
\label{Prop_L}
Let $u$ be a weak solution of (G), where $(G)$ is such that Assumptions \ref{Ass1} are satisfied and that decomposition \eqref{Eq_Kt}-\eqref{Eq_L3} holds for any $y \in \mathbb{R}$,  any characteristics $\zeta(t),\eta(t)$ and universal constants $C_1,C_2,C_3$. 
Then there exists a family of sets $\{S_T\}_{T>0}$ such that for every $\zeta \in S_T$ and $0 \le t < T$
\begin{equation}
\label{Eq_PropL}
\dot{v}(t) = - v^2(t) + \int_{\mathbb{R}} L(\zeta(t),y)  (au^2(t,y) + bu_x^2(t,y))dy  +  C_1 (au^2(t,\zeta(t)) + bv^2(t)),
\end{equation}
where $v(t) := u_x(t,\zeta(t))$.
Moreover, $S_{T_1} \subset S_{T_2}$ for $T_1 > T_2$ and $|\mathbb{R} \backslash \bigcup_{T>0} S_T| = 0$.
\end{proposition}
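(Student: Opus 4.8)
\emph{Proof sketch (plan).} The plan is to read off the evolution of $v$ as the $\eta\to\zeta^+$ limit of the evolution of the difference quotient $\omega=p/h$ along a pair of characteristics, and to construct the sets $S_T$ so that this passage to the limit is legitimate for almost every starting point. First I would fix $\zeta$ and a neighbouring $\eta>\zeta$ and differentiate the auxiliary quantities: the velocity law $\dot\zeta(t)=u(t,\zeta(t))$ of Proposition~\ref{Prop_char} gives $\dot h=p$, while subtracting the two characteristic equations for $u$ and recalling \eqref{Eq_Ktdef} gives \[ \dot p = h(t)\int_{\mathbb{R}}K_t(y)\,[au^2(t,y)+bu_x^2(t,y)]\,dy. \] Dividing, $\omega=p/h$ satisfies the exact pre-limit identity \[ \dot\omega = -\omega^2 + \int_{\mathbb{R}}K_t(y)\,[au^2(t,y)+bu_x^2(t,y)]\,dy, \] which is the template for \eqref{Eq_PropL}.

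Next I would insert the decomposition \eqref{Eq_Kt}. The term built from $L$ is already independent of $\eta$ and reproduces the integral in \eqref{Eq_PropL}; by \eqref{Eq_L1} the term $L^1_t$ contributes $C_1 h(t)^{-1}\int_{\zeta(t)}^{\eta(t)}[au^2+bu_x^2]\,dy$, that is, $C_1$ times the average of the nonnegative energy density over $[\zeta(t),\eta(t)]$; and by \eqref{Eq_L2}--\eqref{Eq_L3} the two remaining terms are bounded by $C_2\int_{\zeta(t)}^{\eta(t)}[au^2+bu_x^2]\,dy$ and $C_3 h(t)E_0$, where $E_0$ bounds the total energy $\int_{\mathbb{R}}(au^2+bu_x^2)\,dx$. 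Two analytic inputs then drive the limit. First, Jensen's inequality $h(t)^{-1}\int_{\zeta(t)}^{\eta(t)}u_x^2\,dy\ge\omega^2$ ties the singular averaging term back to $\omega^2$ and, together with \eqref{Eq_Ktl}, produces a one-sided Riccati inequality for $\omega$ that keeps it from escaping to infinity on short time intervals. Second, at a Lebesgue point the average converges, $h(t)^{-1}\int_{\zeta(t)}^{\eta(t)}[au^2+bu_x^2]\,dy\to au^2(t,\zeta(t))+bu_x^2(t,\zeta(t))$, while the $L^2_t$ and $L^3_t$ contributions vanish as $h(t)\to0$.

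I would then define $S_T$ to consist of those $\zeta$ whose characteristic is unique on $[0,T]$, for which $\zeta(t)$ is a Lebesgue point of both $u_x(t,\cdot)$ and $u_x^2(t,\cdot)$ for every $t\in[0,T]$, and for which the associated $\omega$ stay uniformly bounded on $[0,T]$ as $\eta\to\zeta^+$. For such $\zeta$ one has $h(t)=h(0)\exp\!\big(\int_0^t\omega\big)\to0$, so that $\omega(t)=h(t)^{-1}\int_{\zeta(t)}^{\eta(t)}u_x(t,y)\,dy\to u_x(t,\zeta(t))=v(t)$; passing to the limit in the integrated form of the $\omega$-identity (using the uniform bound on $\omega^2$ and a maximal-function majorant for the averaging term to invoke dominated convergence) yields \eqref{Eq_PropL}, with the $L^1_t$ term producing exactly $C_1(au^2(t,\zeta(t))+bv^2(t))$. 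The nesting $S_{T_1}\subset S_{T_2}$ for $T_1>T_2$ is immediate, since each defining property on $[0,T_1]$ restricts to $[0,T_2]$.

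The main obstacle I expect is the measure-theoretic and uniqueness content of the final claim $|\mathbb{R}\setminus\bigcup_{T>0}S_T|=0$. The work lies in two places. First, establishing forward uniqueness of characteristics from almost every $\zeta$, which I would extract from the one-sided bound on $\omega$ together with the one-sided Lipschitz structure \eqref{Eq_Ktl}. Second, showing that for almost every $\zeta$ the characteristic meets the Lebesgue points of $u_x(t,\cdot)$ and $u_x^2(t,\cdot)$ at the relevant times; this I would obtain by a Fubini-type argument pulling back the (measure-zero) non-Lebesgue sets through the flow map $\zeta\mapsto\zeta(t)$, and it therefore requires absolute-continuity / Lusin-type control on that map. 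Since $u_x(0,\cdot)\in L^2$ is finite almost everywhere, for a.e. $\zeta$ the characteristic does not break instantaneously and remains good on some $[0,T)$, so a.e. $\zeta$ lands in some $S_T$; reconciling this with possible non-uniqueness away from a null set is the crux of the argument.
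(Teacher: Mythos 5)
Your overall architecture matches the paper's: derive the exact identity $\dot\omega = -\omega^2 + \int_{\mathbb{R}} K_t(y)[au^2+bu_x^2]\,dy$, insert the decomposition \eqref{Eq_Kt}--\eqref{Eq_L3}, and pass to the limit $\eta\to\zeta^+$ on a full-measure family of good starting points. However, two of your key steps have genuine gaps.

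First, the limit passage. You propose to treat, for fixed $\zeta$, the term $\int_\sigma^\tau h(t)^{-1}\int_{\zeta(t)}^{\eta(t)}(au^2+bu_x^2)\,dy\,dt$ by pointwise convergence at Lebesgue points plus ``a maximal-function majorant'' and dominated convergence. No such majorant is available: boundedness of $\omega$ controls the average of $u_x$ over $[\zeta(t),\eta(t)]$, not the average of $u_x^2$ (Jensen goes the wrong way, giving only a lower bound), and the maximal function of a function that is merely $L^1$ in $x$ need not be integrable along the curve $t\mapsto\zeta(t)$ --- the time slices $u_x^2(t,\cdot)$ are different functions and their bad sets can track the characteristic. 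This is exactly why the paper does not argue pointwise in $\zeta$: it proves Theorem \ref{Th_conv} (via Proposition \ref{Prop_General54}), an averaged-in-$\zeta$ statement obtained by integrating over $L_T^{unique,N}$, changing variables through the flow map $M_t$ (whose difference quotients are bounded above and below on that set), and invoking $L^1$-continuity of translation; the convergence for a.e.\ $\zeta$ is then extracted only along a subsequence $\epsilon_k\to 0$, which is all that is needed and all that the paper's proof uses.

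Second, the full-measure claim $|\mathbb{R}\setminus\bigcup_{T>0}S_T|=0$. You correctly isolate it as the crux, but your proposed mechanism --- forward uniqueness ``from the one-sided bound on $\omega$ together with the one-sided Lipschitz structure \eqref{Eq_Ktl}'' --- only yields one side. Condition \eqref{Eq_Ktl} gives the Riccati lower bound $\dot\omega\ge-\omega^2-LC$ (Lemma \ref{Lem2}), which prevents $\omega\to-\infty$ in short time; it says nothing about $\omega\to+\infty$, and the good sets are defined by uniform \emph{two-sided} bounds on $\omega$ for all nearby $\eta$. The paper's Lemma \ref{Lem5} supplies the missing upper bound for a.e.\ $\zeta$ by a Vitali covering argument: if difference quotients exceeded every threshold $M$ on a set of positive measure, the Riccati lower bound would propagate largeness of $\omega$ forward in time and force the corresponding image intervals to carry more of $\int u_x^2\,dx$ than the finite total energy allows. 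Without this energy/covering argument your sets $S_T$ are well defined but there is no proof that they exhaust almost every point. (Your Fubini step for Lebesgue points along characteristics is fine and corresponds to the paper's Lemma \ref{lem_IL}.)
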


Our second result depends more on the structure of the equation, so we formulate and prove it for the Camassa-Holm equation only. Nevertheless, the same strategy may be useful for studying related equations of the form (G).

\begin{theorem}
\label{Th_continuity}
Let $[\alpha,\beta] \subset \mathbb{R}$ be a compact interval. Let $u$ be a weak solution of the Camassa-Holm equation. 
Then
\begin{equation*}
\lim_{t \to 0^+} \int_{[\alpha(t),\beta(t)]} (u_x^-(t,x))^2 dx = \int_{[\alpha,\beta]} (u_x^-(0,x))^2 dx
\end{equation*}
where $\alpha(t)$ and $\beta(t)$ are arbitrary characteristics emanating from $\alpha$ and $\beta$, respectively.
\end{theorem}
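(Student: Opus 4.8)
The plan is to pass to Lagrangian coordinates along characteristics and to exploit a cancellation that is special to the Camassa--Holm nonlinearity. First I would record that the Camassa--Holm equation is of the form (G) with $a=1$, $b=\tfrac12$ and $A(x,y)=\tfrac12\,\mathrm{sgn}(x-y)e^{-|x-y|}$, and that (as in Lemma~\ref{Lem_Coef}) the decomposition \eqref{Eq_Kt}--\eqref{Eq_L3} holds with $C_1=1$ and $L(\zeta,y)=-\tfrac12 e^{-|\zeta-y|}$, the constant $C_1=1$ being dictated by the unit jump of $A(\cdot,y)$ across $x=y$. Feeding this into Proposition~\ref{Prop_L} and using $(1-\partial_{xx})P=u^2+\tfrac12 u_x^2$, the evolution of $v(t):=u_x(t,\zeta(t))$ along a characteristic $\zeta\in S_T$ becomes
\begin{equation*}
\dot v(t)=-\tfrac12 v^2(t)+u^2(t,\zeta(t))-P(t,\zeta(t)).
\end{equation*}
Since $u\in L^\infty([0,\infty),H^1(\mathbb{R}))\hookrightarrow L^\infty$, I would note that $R(t):=u^2(t,\zeta(t))-P(t,\zeta(t))$ is bounded, $|R|\le M$, with $M$ depending only on $\|u_0\|_{H^1}$ (here $P\ge0$ and $u^2\le\|u\|_\infty^2\le C\|u_0\|_{H^1}^2$).

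Next I would introduce the Jacobian $\xi(t):=\exp\big(\int_0^t v(s)\,ds\big)$ of the characteristic flow $\Phi_t:\zeta\mapsto\zeta(t)$, which satisfies $\dot\xi=v\xi$, $\xi(0)=1$, and set up the change of variables
\begin{equation*}
\int_{[\alpha(t),\beta(t)]}(u_x^-(t,x))^2\,dx=\int_{[\alpha,\beta]\cap S_T}(v^-(t;\zeta))^2\,\xi(t;\zeta)\,d\zeta .
\end{equation*}
Justifying this identity is the heart of the matter and, I expect, the main obstacle: because characteristics are highly non-unique one must control both the failure of injectivity of $\Phi_t$ (focusing) and of surjectivity onto $[\alpha(t),\beta(t)]$ (de-focusing fans). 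The observation that makes the \emph{negative} part work is that for small $t$ no focusing of positive $\zeta$-measure can occur -- since $v(0;\zeta)=u_x(0,\zeta)$ is finite for a.e. $\zeta$, collisions $\zeta_1(t)=\zeta_2(t)$ happen only at times bounded away from $0$ off a null set -- so $\Phi_t$ is essentially injective, while the portion of $[\alpha(t),\beta(t)]$ not covered by $\Phi_t([\alpha,\beta])$ consists precisely of de-focusing fans emanating from measure-zero sets of initial points, which carry \emph{nonnegative} slope and hence contribute nothing to the negative SDD. It is exactly this last point that fails for $(u_x^+)^2$, where such fans do contribute; this is the mechanism behind the jump of the positive SDD at $t=0$ in the peakon--antipeakon example.

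With the identity in hand, the proof reduces to showing right-continuity of the Lagrangian integral. Here I would use the cancellation
\begin{equation*}
2\dot v+v^2=2\big(u^2(t,\zeta(t))-P(t,\zeta(t))\big)=2R(t),
\end{equation*}
which gives, on the set where $v<0$,
\begin{equation*}
\frac{d}{dt}\big[(v^-)^2\xi\big]=v\xi\,(2\dot v+v^2)=2R\,v\,\xi,\qquad\Big|\frac{d}{dt}\big[(v^-)^2\xi\big]\Big|\le 2M\,v^-\xi .
\end{equation*}
Since $(v^-)^2\xi$ is continuous in $t$ and equals $(u_x^-(0,\zeta))^2$ at $t=0$, integrating in $t$, then in $\zeta$, and applying Fubini yields
\begin{equation*}
\int_{[\alpha,\beta]\cap S_T}\big|(v^-(t))^2\xi(t)-(u_x^-(0,\zeta))^2\big|\,d\zeta\le 2M\int_0^t\!\!\int_{[\alpha,\beta]\cap S_T}v^-(s;\zeta)\,\xi(s;\zeta)\,d\zeta\,ds .
\end{equation*}
By the same change of variables the inner integral equals $\int_{[\alpha(s),\beta(s)]}u_x^-(s,x)\,dx$, which by Cauchy--Schwarz is at most $\|u_0\|_{H^1}\sqrt{\beta(s)-\alpha(s)}$; since $\dot\alpha,\dot\beta=u(s,\cdot)$ are bounded, $\beta(s)-\alpha(s)$ stays bounded on $[0,1]$, so the whole right-hand side is $\le Ct$. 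Letting $t\to0^+$ gives the claim. The remaining points to check are that $v^-(0;\zeta)=u_x^-(0,\zeta)$ a.e. (so the $t=0$ Lagrangian and Eulerian integrals agree, using $|\mathbb{R}\setminus\bigcup_{T>0} S_T|=0$ from Proposition~\ref{Prop_L}), and that the ambiguity in the arbitrary endpoint characteristics $\alpha(t),\beta(t)$ only affects an $x$-region of length $O(t)$, hence is negligible in the limit.
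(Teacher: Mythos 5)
Your Lagrangian reduction is correct and efficient: the ODE $\dot v = u^2-\tfrac12 v^2-P$ along characteristics in $S_T$ is exactly the paper's Corollary \ref{CorCHODE}, and your cancellation $\frac{d}{dt}\bigl[(v^-)^2\xi\bigr]=2(u^2-P)v\xi$ on $\{v<0\}$ is precisely the identity the paper exploits in Proposition \ref{Prop_doublebound}; the Gronwall--Fubini estimate on the covered set is fine. The gap is exactly where you flag it -- the change-of-variables identity -- and the justifications you offer for it do not hold as stated. First, ``collisions happen only at times bounded away from $0$ off a null set'' is not true uniformly: on sets where $u_x(0,\cdot)$ is unbounded below (which may have positive measure), focusing occurs at times arbitrarily close to $0$. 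One must truncate at a slope level $N$, make the exceptional sets small in energy (using Lemma \ref{Lem5}), and run an $\epsilon$-argument; this is the paper's decomposition involving $I_T$, $I_T^{unique,N}$, $D_t^{unique,N}$ and its terms ${\bf I_1},{\bf I_2},{\bf I_3}$. Moreover, the right-hand side of your identity misses the characteristics that are unique on $[0,t]$ but not in $S_T$ (their slope blows up or they branch during $(t,T)$); their images at time $t$ can carry negative slope, and controlling them is the paper's term ${\bf I_4}$, which needs the dichotomy: either $v(\tau)>N$, whence $v(t)>0$ and there is no contribution to the negative part, or $v(\tau)<-N$, whence $v\le-\sqrt{LC}$ on all of $[0,t]$ and Proposition \ref{Prop_doublebound} applies.

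Second, and this is the main missing idea: your claim that the uncovered part of $[\alpha(t),\beta(t)]$ ``consists of de-focusing fans \dots which carry nonnegative slope'' is the paper's assertion ${\bf I_5}=0$, which is the hardest step of the proof, not an observation. Showing that a.e.\ $\gamma$ with $u_x(t,\gamma)<0$ lies in $M_t(L_t^{unique})$ requires reversing time (Lemma \ref{Lem_bwd}), the backward Riccati bound showing $\omega$ is bounded \emph{above} along backward characteristics emanating from negative-slope points (Lemma \ref{Lem_lem}), a cone/countability argument giving a.e.\ backward uniqueness, and -- crucially -- a second application of the measure-theoretic Lemma \ref{Lem5} to the time-reversed solution to get the bound on $\omega$ from \emph{below} for a.e.\ such $\gamma$; none of these tools appears in your sketch. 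Relatedly, your closing remark that the endpoint ambiguity is negligible ``because it affects an $x$-region of length $O(t)$'' is false as a principle: $u_x^2$ can concentrate on arbitrarily short intervals -- that is exactly what $(u_x^+)^2$ does in the peakon--antipeakon fan, where a region of length $O(t)$ carries energy $2H_0^2$. The fan contributes nothing to the \emph{negative} part only by virtue of the same ${\bf I_5}$-type argument, not because it is short. So your skeleton and the good-set estimate match the paper, but the proposal assumes, rather than proves, the statements that constitute the bulk of the paper's proof.
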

Theorem \ref{Th_continuity} states that the part of $u_x^2$ related to negative slope of $u$ is right continuous along characteristics. The same result for positive slopes does not hold (see the example of the peakon-antipeakon interaction in Section \ref{Sec_Intro}). Nonetheless, it is possible to prove the existence of right limits. 

\begin{theorem}
\label{Th_limit}
Let $[\alpha,\beta] \subset \mathbb{R}$ be a compact interval. Let $u$ be a weak solution of the Camassa-Holm equation. 
Then for any pair of characteristics $\alpha(t)$ and $\beta(t)$ emanating from $\alpha$ and $\beta$ the function $t \mapsto \int_{[\alpha(t),\beta(t)]} (u_x^+(t,x))^2dx$ has locally bounded variation. In particular, there exists the limit
\begin{equation*}
\lim_{t \to 0^+} \int_{[\alpha(t),\beta(t)]} (u_x^+(t,x))^2 dx = \Lambda_+^{\alpha(\cdot), \beta(\cdot)}.
\end{equation*}
This limit depends on the choice of characteristics $\alpha(t),\beta(t)$ and can be made unique by selecting e.g. the leftmost characteristics (see Section \ref{Sec_Boundedness} for definition).
\end{theorem}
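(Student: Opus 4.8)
The plan is to pass to Lagrangian coordinates along the (a.e.\ unique) characteristics supplied by Proposition~\ref{Prop_char} together with the family $\{S_T\}$ of Proposition~\ref{Prop_L}, and to read the evolution of the positive square-of-derivative density directly off the characteristic ODE. First I would specialize Proposition~\ref{Prop_L} to Camassa--Holm. Computing $\partial_x A$ for $A(x,y)=\tfrac12\,\mathrm{sgn}(x-y)e^{-|x-y|}$ gives $\partial_x A(x,y)=-\tfrac12 e^{-|x-y|}+\delta(x-y)$, so the decomposition \eqref{Eq_Kt}--\eqref{Eq_L3} holds with $L(x,y)=-\tfrac12 e^{-|x-y|}$ and $C_1=1$ (and $a=1,\ b=\tfrac12$). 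Substituting into \eqref{Eq_PropL} yields, for every $\zeta\in S_T$ and $0\le t<T$, the relation $\dot v(t)=-\tfrac12 v^2(t)+u^2(t,\zeta(t))-P(t,\zeta(t))$, where $v=u_x(t,\zeta(t))$ and $P$ is the Camassa--Holm convolution. Combined with the Jacobian equation $\dot w=vw$, $w(t)=\exp\!\int_0^t v\,ds$, the change of variables $x=\zeta(t)$ gives $G^+(t):=\int_{[\alpha(t),\beta(t)]}(u_x^+(t,x))^2\,dx=\int_{[\alpha,\beta]}(v^+(t,\zeta))^2\,w(t,\zeta)\,d\zeta$, the integrand being defined for $\zeta\in\bigcup_T S_T$, a set of full measure.

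The core computation is the pointwise identity $\tfrac{d}{dt}\big[(v^+)^2 w\big]=2v^+(u^2-P)\,w$, obtained by cancelling the $-v^3$ coming from $-\tfrac12 v^2$ against the $+v^3$ coming from $\dot w=vw$ (and noting that $(v^+)^2$ is $C^1$ in $v$). Using the energy bound $\|u(t,\cdot)\|_{H^1}\le\|u_0\|_{H^1}$, which makes both $u^2$ and $P$ bounded by a constant $M$, together with Cauchy--Schwarz, I obtain on any time interval free of blow-up the estimate $\big|\tfrac{d}{dt}G^+\big|\le 2M\int_{[\alpha(t),\beta(t)]}u_x^+\,dx\le 2M\sqrt{\beta(t)-\alpha(t)}\,\sqrt{G^+(t)}$. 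Since $\beta(t)-\alpha(t)$ stays bounded (characteristics move with the bounded speed $u$), this controls the absolutely continuous part of the variation of $G^+$ on compact time intervals.

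It then remains to control the jumps, and here the decisive structural fact is the sign asymmetry in $\dot v=-\tfrac12 v^2+u^2-P$: the term $-\tfrac12 v^2$ drives $v$ to $-\infty$ in finite time (wave breaking) but \emph{prevents} $v$ from reaching $+\infty$, so the positive-slope mass $\int(v^+)^2w$ can never concentrate and vanish. A direct asymptotic analysis of a breaking characteristic ($v\sim -2/(T_*-t)$, $w\sim c\,(T_*-t)^2$) confirms that the concentrating energy is carried entirely by the negative part and remains finite there. Consequently $G^+$ can only jump \emph{upward}, at instants where energy re-emerges from a concentration event into positive slope, and each jump is bounded by the total energy. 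A nonnegative, uniformly bounded function whose downward motion is Lipschitz and whose jumps are all positive is of locally bounded variation; this gives the $BV$ claim, and $\Lambda_+^{\alpha(\cdot),\beta(\cdot)}=\lim_{t\to0^+}G^+(t)$ exists because a $BV$ function has one-sided limits everywhere. The dependence on the characteristics is visible at the endpoints: for $t>0$ the interval $[\alpha(t),\beta(t)]$ depends on the chosen nonunique characteristics, so a concentration--reemergence sitting exactly at $\alpha$ or $\beta$ is apportioned differently, and the leftmost characteristics single out a canonical value.

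I expect the genuine obstacle to be this last step, namely justifying the change of variables and the differentiation under the integral \emph{across} the singular times at which characteristics cease to be unique and leave $\bigcup_T S_T$, and proving that the cumulative upward jumps are summable (bounded by the finite total energy) rather than merely each individually finite. This is precisely the point at which the global finite-energy bound, the monotonicity built into the $-\tfrac12 v^2$ term, and the companion right-continuity statement for the negative part (Theorem~\ref{Th_continuity}) have to be played off against one another.
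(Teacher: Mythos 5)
Your overall mechanism is the same as the paper's (the quantity $v^2\cdot\text{Jacobian}$ evolving by $2v(u^2-P)\cdot\text{Jacobian}$, the sign asymmetry of $-\tfrac12v^2$, and a ``one-sided Lipschitz $\Rightarrow$ BV'' lemma), but there is a genuine gap at the step you yourself flag, and it is not a technicality: the central identity $G^+(t)=\int_{[\alpha,\beta]}(v^+(t,\zeta))^2\,w(t,\zeta)\,d\zeta$ is false in general. The characteristic flow $\zeta\mapsto\zeta(t)$ restricted to the good set is monotone but \emph{not} surjective onto $[\alpha(t),\beta(t)]$: at points where forward uniqueness fails, a cone of characteristics opens up and the image of $[\alpha,\beta]\cap S_T$ omits the corresponding intervals. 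Those omitted intervals are exactly where new positive slope is created: in the peakon--antipeakon example of Section \ref{Sec_Intro}, the entire mass $2H_0^2$ of $(u_x^+)^2$ present right after the collision lives on the rarefaction fan emanating from the collision point and corresponds to no Lagrangian parcel $\zeta$ carrying $(v^+)^2w>0$. Worse, if your identity held together with differentiation under the integral sign, the bound $\bigl|\tfrac{d}{dt}G^+\bigr|\le 2M\sqrt{\beta(t)-\alpha(t)}\sqrt{G^+}$ would make $G^+$ locally Lipschitz, hence continuous --- contradicting the upward jump in that same example. So the ``core computation'' proves too much; equality must be abandoned, and with it your decomposition of $G^+$ into ``an AC part with controlled derivative plus upward jumps,'' which presupposes a structure that is precisely what has to be proved.

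What the paper does instead, and what your argument needs, is to keep only one-sided information. For $0<t_1<t_2$: (i) by forward uniqueness, $M_{t_1t_2}\bigl(L^{unique}_{t_1t_2}\cap[\alpha(t_1),\beta(t_1)]\bigr)\subset[\alpha(t_2),\beta(t_2)]$, so the image of the good set only \emph{under}estimates $G^+(t_2)$; (ii) at the initial time $t_1>0$, almost every point with $u_x(t_1,\cdot)\ge0$ belongs to that good set --- this is the time-reversed version (via Lemma \ref{Lem_bwd} and Lemma \ref{Lem5}) of the $I_5=0$ argument from Section \ref{Sec_ProofCont}, and it is here that starting from $t_1>0$ matters: your parametrization by $\zeta\in[\alpha,\beta]$ at time $0$ has no such coverage statement; (iii) along good characteristics one splits slopes $\ge V_4$, where $v^2M'$ is nearly conserved by Proposition \ref{Prop_doublebound}, from slopes in $[0,V_4)$, where $\bigl|\tfrac{d}{dt}(v^2M')\bigr|=|2v(u^2-P)M'|$ is uniformly bounded. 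Together these give the single inequality \eqref{Eq_minus2eps}, $G^+(t_2)\ge G^+(t_1)-\kappa(t_2-t_1)$ with $\kappa$ locally uniform, valid for \emph{all} pairs $t_1<t_2$. That inequality already finishes the proof: $G^+(t)+\kappa t$ is nondecreasing, hence $G^+$ is $BV_{loc}$ and has one-sided limits (Lemma \ref{Lem_limit}); in particular the summability of upward jumps, which you list as the remaining obstacle, is automatic from boundedness plus the one-sided Lipschitz estimate and never needs to be addressed jump by jump. Your heuristic blow-up asymptotics ($v\sim-2/(T_*-t)$, $w\sim c(T_*-t)^2$) supports the intuition but does not substitute for steps (i)--(iii), so as written the proposal is incomplete at exactly the point where the real work lies.
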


Theorems \ref{Th_continuity} and \ref{Th_limit} can be generalised by duality, see Section \ref{Sec_ProofThGeneral}, to left limits. We obtain the following general result. 
\begin{theorem}
\label{Th_general}
Let $u$ be a weak solution of the Camassa-Holm equation. Fix $t_0 \ge 0$. Let $\alpha(\cdot), \beta(\cdot)$ be any characteristics of $u$ satisfying $\alpha(t_0) < \beta(t_0)$. Then
\begin{eqnarray}
\lim_{t \to t_0^+} \int_{[\alpha(t),\beta(t)]} (u_x^-(t,x))^2 dx &=& \int_{[\alpha(t_0),\beta(t_0)]} (u_x^-(t_0,x))^2 dx, \label{EqTh81}\\
\lim_{t \to t_0^-} \int_{[\alpha(t),\beta(t)]} (u_x^+(t,x))^2 dx &=& \int_{[\alpha(t_0),\beta(t_0)]} (u_x^+(t_0,x))^2 dx. \label{EqTh82}
\end{eqnarray}
Moreover, functions $$t \mapsto \int_{[\alpha(t),\beta(t)]} (u_x^+(t,x))^2dx$$ and $$t \mapsto \int_{[\alpha(t),\beta(t)]} (u_x^-(t,x))^2dx$$ have locally bounded variation. In particular, there exist nonnegative numbers $\Lambda_+^{\alpha(\cdot), \beta(\cdot)}$, $\Lambda_-^{\alpha(\cdot), \beta(\cdot)}$ such that 
\begin{eqnarray} 
\lim_{t \to t_0^+} \int_{[\alpha(t),\beta(t)]} (u_x^+(t,x))^2 dx &=& \Lambda_+^{\alpha(\cdot), \beta(\cdot)}, \label{EqTh83}\\
\lim_{t \to t_0^-} \int_{[\alpha(t),\beta(t)]} (u_x^-(t,x))^2 dx &=& \Lambda_-^{\alpha(\cdot), \beta(\cdot)}. \label{EqTh84}
\end{eqnarray}
\end{theorem}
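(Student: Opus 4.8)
The plan is to deduce Theorem \ref{Th_general} from the initial-time statements already established in Theorems \ref{Th_continuity} and \ref{Th_limit}, by exploiting two symmetries of the Camassa-Holm equation written in the conservative autonomous form \eqref{eq_weakCH1}-\eqref{eq_weakCH3}. The first is invariance under time translation, which moves the arbitrary base time $t_0$ to the initial time and yields the right-sided claims \eqref{EqTh81} and \eqref{EqTh83}. The second is invariance under the combined time-reversal-and-sign-flip $w(s,x) := -u(t_0 - s, x)$, which is the \emph{duality} alluded to before the statement: it interchanges $u_x^+$ and $u_x^-$ and thereby converts the right-sided results into the left-sided claims \eqref{EqTh82} and \eqref{EqTh84}.

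For the right-sided statements I would set $v(s,x) := u(t_0 + s, x)$. Since the nonlocal term $P$ in \eqref{eq_weakCH1}-\eqref{eq_weakCH3} carries no explicit time dependence and the equation is autonomous, $v$ is again a weak solution, with $v_x^{\pm}(s,x) = u_x^{\pm}(t_0+s,x)$; moreover, if $\alpha(\cdot),\beta(\cdot)$ are characteristics of $u$ with $\alpha(t_0)\le\beta(t_0)$, then $s \mapsto \alpha(t_0+s)$ and $s\mapsto \beta(t_0+s)$ are characteristics of $v$ emanating from $\alpha(t_0)$ and $\beta(t_0)$. Applying Theorem \ref{Th_continuity} to $v$ gives \eqref{EqTh81}, and applying Theorem \ref{Th_limit} to $v$ gives the local bounded variation of $t \mapsto \int_{[\alpha(t),\beta(t)]}(u_x^+(t,x))^2\,dx$ on $[t_0,\infty)$ together with the right limit \eqref{EqTh83}.

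For the left-sided statements I would use the already introduced $w(s,x) := -u(t_0 - s, x)$, $s\in[0,t_0]$. A direct computation, using that $P$ depends on $u$ only through the even quantities $u^2$ and $u_x^2$, shows that the sign flip \emph{alone} fails to solve the equation but that the combined reversal and flip does: $w_s = u_t$, $w w_x = u u_x$ and $P_x[w] = P_x[u]$, all evaluated at $(t_0-s,x)$, so $w$ satisfies \eqref{eq_weakCH1}-\eqref{eq_weakCH3} in the distributional sense. The crucial consequence is the swap $w_x^{+}(s,x) = u_x^{-}(t_0-s,x)$ and $w_x^{-}(s,x) = u_x^{+}(t_0-s,x)$, while $s \mapsto \alpha(t_0 - s)$ is a characteristic of $w$ for each characteristic $\alpha(\cdot)$ of $u$. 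Applying Theorem \ref{Th_continuity} to $w$ and substituting $t = t_0 - s$ reproduces exactly \eqref{EqTh82}; applying Theorem \ref{Th_limit} to $w$ and performing the same substitution (which merely reverses the orientation of the time axis and hence preserves bounded variation) yields the local bounded variation of $t \mapsto \int_{[\alpha(t),\beta(t)]}(u_x^-(t,x))^2\,dx$ on $[0,t_0]$ and the left limit \eqref{EqTh84}.

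Combining the variation bounds from the two sides across the arbitrary point $t_0$ then gives local bounded variation of both functions on all of $[0,\infty)$, since any compact time interval is covered by finitely many such one-sided neighbourhoods. I expect the main obstacle to be bookkeeping in the reversal argument rather than any new estimate: one must verify rigorously that $w$ is a genuine weak solution in the sense of the paper's definition — that the distributional identity survives the change of variables, and that the $L^{\infty}([0,\infty),H^1(\mathbb{R}))$ bound and continuity are inherited (the latter via a harmless extension of $w$ from $[0,t_0]$ to a global weak solution, which exists by Xin--Zhang, so that Theorems \ref{Th_continuity} and \ref{Th_limit} apply verbatim) — and that the characteristics $s\mapsto\alpha(t_0-s)$, $s\mapsto\beta(t_0-s)$ remain ordered, so that the integrals over $[\alpha(t),\beta(t)]$ are well defined; this ordering is automatic since they are forward characteristics of the bona fide solution $w$. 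Note finally that the admissibility condition \eqref{eq_Oleinik} is \emph{not} imposed here, so time reversal does not meet the usual entropy obstruction, which is precisely what makes the duality legitimate.
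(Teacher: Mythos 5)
Your proposal is correct and follows essentially the same route as the paper: the right-sided claims \eqref{EqTh81}, \eqref{EqTh83} come from applying Theorems \ref{Th_continuity} and \ref{Th_limit} to the time-shifted solution, and the left-sided claims \eqref{EqTh82}, \eqref{EqTh84} from applying them to $\tilde u(t,x):=-u(t_0-t,x)$, which is again a weak solution by Lemma \ref{Lem_bwd}, with the sign flip exchanging $u_x^+$ and $u_x^-$. Your extra care about extending $-u(t_0-s,x)$ past $s=t_0$ (or, equivalently, noting the theorems hold on finite time intervals) and about the reversed curves being genuine characteristics only makes explicit details the paper leaves implicit.
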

Using Theorem \ref{Th_general} we obtain, by suitable approximation with step functions, our main weak-continuity result.
\begin{theorem}
\label{Th_cadlag}
Let $u$ be a weak solution of the Camassa-Holm equation. Then
\begin{itemize}
\item function $t \mapsto (u_x^+(t,\cdot))^2$ is weakly ladcag (left-continuous with right limits) and $BV_{loc}$, 
\item function $t \mapsto (u_x^-(t,\cdot))^2$ is weakly cadlag (right-continuous with left limits) and $BV_{loc}$.
\end{itemize} 
More precisely, for any time $t_0 \ge 0$ and any continuous compactly supported function $\phi: \mathbb{R} \to [0,\infty)$ 
\begin{eqnarray*}
\lim_{t \to t_0^+} \int_{\mathbb{R}} \phi(x)(u_x^-(t,x))^2 dx &=& \int_{\mathbb{R}} \phi(x)(u_x^-(t_0,x))^2 dx,\\
\lim_{t \to t_0^-} \int_{\mathbb{R}} \phi(x) (u_x^+(t,x))^2 dx &=& \int_{\mathbb{R}} \phi(x)(u_x^+(t_0,x))^2 dx,\\
\end{eqnarray*}
and there exist limits
\begin{eqnarray*} 
\lim_{t \to t_0^+} \int_{\mathbb{R}} \phi(x)(u_x^+(t,x))^2 dx,\\
\lim_{t \to t_0^-} \int_{\mathbb{R}} \phi(x)(u_x^-(t,x))^2 dx.
\end{eqnarray*}
Moreover, the limits are linear in $\phi$ and so define bounded linear functionals on $C_c(\mathbb{R})$.
Finally, for any continuous compactly supported function $\phi: \mathbb{R} \to [0,\infty)$ the functions $$t \mapsto \int_{\mathbb{R}} \phi(x)(u_x^-(t,x))^2 dx$$ and $$t \mapsto \int_{\mathbb{R}} \phi(x)(u_x^+(t,x))^2 dx$$ have locally bounded variation on $[0,\infty)$.
\end{theorem}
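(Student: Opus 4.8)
The plan is to transfer the co-moving interval estimates of Theorem \ref{Th_general} to a fixed Eulerian test function $\phi$ by approximating $\phi$ with step functions adapted to an ordered family of characteristics, and then to pass to the limit uniformly in time. Fix $t_0 \ge 0$ and $\phi \in C_c(\mathbb{R})$ with $\phi \ge 0$, supported in some $[A,B]$. First I would record two structural facts valid for any weak solution: since $\int (u^2+u_x^2)(t)\,dx$ is bounded uniformly in $t$ and $H^1(\mathbb{R}) \hookrightarrow L^\infty(\mathbb{R})$, one has $\|u(t,\cdot)\|_{L^\infty} \le M$ uniformly, so characteristics propagate with uniformly bounded speed; and distinct characteristics preserve order, so a family $\xi_0(\cdot) \le \cdots \le \xi_N(\cdot)$ emanating from an increasing set of base points stays ordered. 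Given $\delta>0$, I choose base points $x_0 < \cdots < x_N$ with $\xi_i(t_0)=x_i$, mesh below $\delta$, and $x_0,x_N$ far enough outside $[A,B]$ that $\mathrm{supp}\,\phi \subset (\xi_0(t),\xi_N(t))$ for the relevant $t$ (by continuity near $t_0$; over a whole $[0,T]$ using the uniform speed bound). Writing $I_i(t)=[\xi_{i-1}(t),\xi_i(t)]$ and $G_i^{\pm}(t)=\int_{I_i(t)}(u_x^{\pm}(t,x))^2\,dx$, the intervals are consecutive and cover $\mathrm{supp}\,\phi$, so
\[
\int_{\mathbb{R}} \phi\,(u_x^{\pm}(t))^2\,dx = \sum_{i=1}^N \int_{I_i(t)} \phi\,(u_x^{\pm}(t))^2\,dx,
\]
and replacing $\phi$ on $I_i(t)$ by $\phi(x_i)$ costs at most $\sum_i \mathrm{osc}_{I_i(t)}(\phi)\,G_i^{\pm}(t)$. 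Since $\phi$ is uniformly continuous, each $I_i(t)$ stays close to the small interval $[x_{i-1},x_i]$ for $t$ near $t_0$, and $\sum_i G_i^{\pm}(t) \le \|u_x(t)\|_{L^2}^2 \le C$, this error is of order $\omega_\phi(\delta)\,C$, \emph{uniformly} for $t$ near $t_0$.

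With this uniform approximation, the one-sided limit assertions follow by a three-$\varepsilon$ argument. For right-continuity of $(u_x^-)^2$, fix the approximation error below $\varepsilon$ and apply \eqref{EqTh81} to each of the finitely many terms: $\lim_{t\to t_0^+} G_i^-(t) = \int_{[x_{i-1},x_i]}(u_x^-(t_0))^2\,dx$, so $\sum_i \phi(x_i) G_i^-(t)$ converges as $t\to t_0^+$ to the corresponding step-function approximation of $\int \phi(u_x^-(t_0))^2$; since both approximations lie within $\varepsilon$, the limit equals $\int\phi(u_x^-(t_0))^2$ up to $2\varepsilon$, and $\varepsilon\to 0$ gives the claim. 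The ladcag property of $(u_x^+)^2$ is identical via \eqref{EqTh82} and $t\to t_0^-$. For the remaining one-sided limits I would argue existence by a Cauchy criterion: for $s,t\to t_0^+$, \eqref{EqTh83} gives $G_i^+(t)-G_i^+(s)\to 0$, whence $\big|\int\phi(u_x^+(t))^2 - \int\phi(u_x^+(s))^2\big| \le 2\varepsilon + \big|\sum_i \phi(x_i)(G_i^+(t)-G_i^+(s))\big| \to 2\varepsilon$, so the family is Cauchy (similarly for $u_x^-$ and $t\to t_0^-$ via \eqref{EqTh84}). Linearity of all four limits in $\phi$ is inherited from the integral; boundedness follows from $\big|\int\phi(u_x^{\pm}(t))^2\big| \le \|\phi\|_{L^\infty}\,C$, so each limit is a bounded positive linear functional on $C_c(\mathbb{R})$, extending to a finite Radon measure; signed $\phi$ is treated by $\phi=\phi^+-\phi^-$.

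For the $BV_{loc}$ claims I would use lower semicontinuity of total variation. On a fixed $[0,T]$, take the partition above (widened via the speed bound so coverage holds throughout $[0,T]$) and refine the mesh along $\delta_n\to 0$, obtaining approximants $g_n(t)=\sum_i \phi(x_i^{(n)})\,G_i^{\pm,(n)}(t)$ converging pointwise to $g(t)=\int\phi(u_x^{\pm}(t))^2$. Each $G_i^{\pm,(n)}$ is $BV_{loc}$ by Theorem \ref{Th_general}, so each $g_n$ is $BV$ with $\mathrm{TV}_{[0,T]}(g_n) \le \|\phi\|_{L^\infty}\sum_i \mathrm{TV}_{[0,T]}(G_i^{\pm,(n)})$. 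The decisive point is that this last sum is bounded \emph{uniformly in $n$}: since $(u_x^{\pm})^2$ is transported by the flow, the variation of each $G_i^{\pm,(n)}$ is produced only by the source terms in the evolution \eqref{Eq_PropL} of $u_x$ along characteristics (nothing crosses a characteristic), so summing over a partition of the label interval $[x_0,x_N]$ recovers a fixed quantity controlled by the energy and the accretion/dissipation of $u_x^{\pm}$ over $[x_0,x_N]\times[0,T]$, independent of the refinement. Lower semicontinuity then gives $\mathrm{TV}_{[0,T]}(g)<\infty$. Because characteristics may stretch over long times, in practice I would establish this on short subintervals of $[0,T]$, on which the $I_i(t)$ barely deform, and sum the variations over a finite cover of $[0,T]$.

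The routine ingredients here are the $\varepsilon/3$ and Cauchy arguments; the genuine difficulty is making the step-function approximation uniform in time. Two effects must be controlled at once: the oscillation of $\phi$ over the moving, possibly stretching intervals $I_i(t)$, and the persistence of coverage of $\mathrm{supp}\,\phi$. Near a fixed $t_0$ both are handled by continuity of characteristics and uniform continuity of $\phi$, but the global $BV_{loc}$ statement forces one to confront the spreading of characteristics, which is why the variation estimate is best organised on a finite cover of $[0,T]$ by short intervals and then summed. The remaining subtlety, and the place where the structure of the Camassa-Holm equation is essential, is the uniform-in-refinement bound on $\sum_i \mathrm{TV}(G_i^{\pm})$: this is precisely where Theorem \ref{Th_general}, and behind it the transport identity \eqref{Eq_PropL} together with the sign of the quadratic term $-v^2$, does the real work.
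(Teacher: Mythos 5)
Your treatment of the one-sided limit statements is sound and is essentially the paper's own argument: approximate $\phi$ by a step function subordinate to (leftmost) characteristics emanating at time $t_0$, control the oscillation error uniformly for $t$ near $t_0$ using the bounded speed of characteristics and $\sup_t \int u_x^2(t,x)dx < \infty$, and apply Theorem \ref{Th_general} to each of the finitely many moving intervals; your Cauchy-criterion derivation of the existence of $\lim_{t\to t_0^+}\int\phi\,(u_x^+)^2$ from \eqref{EqTh83}, and the positivity/linearity remarks, are legitimate (and slightly more direct) variants of what the paper does.

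The gap is in the $BV_{loc}$ part. You reduce everything to the claim that $\sum_i \mathrm{TV}_{[0,T]}(G_i^{\pm,(n)})$ is bounded uniformly in the refinement $n$, but you justify this only by the heuristic that the variation ``is produced only by the source terms'' and that ``nothing crosses a characteristic.'' Theorem \ref{Th_general}, which is the only tool you invoke, gives no quantitative bound whatsoever on $\mathrm{TV}(G_i)$ --- it is a purely qualitative $BV_{loc}$ statement --- so nothing prevents the sum of these variations from growing with $n$. Moreover, the natural quantitative bounds one would try here genuinely fail under refinement: if one bounds $\mathrm{TV}_{[0,T]}(G_i)$ through a one-sided Lipschitz constant of the form $\kappa_i = K\sup_t |I_i(t)| + \sup_t \int_{I_i(t)} u_x^2(t,x)dx$, the suprema over $t$ are taken separately in each interval, do not telescope at any common time, and give $\sum_i \kappa_i$ of order $N$. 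What is actually needed is the time-resolved inequality \eqref{Eq_minus2eps} established inside the proof of Theorem \ref{Th_limit}, namely
$G_i(t_2)-G_i(t_1) \ge -(t_2-t_1)\bigl(K\,|I_i(t_1)| + \int_{I_i(t_1)} u_x^2(t_1,x)dx\bigr)$,
whose drift constants are evaluated at the \emph{common} time $t_1$ and therefore sum over $i$ to a quantity (essentially $K(x_N-x_0)$ plus the total energy) independent of the refinement. Once this estimate is in hand, the cleanest course --- and the paper's course --- is not to sum total variations at all: summing \eqref{Eq_minus2eps} with the weights $c_i \le \|\phi\|_\infty$ and controlling the mismatch between the moving step function and $\phi$ (this is where Lipschitz continuity of $\phi$ enters, via the pushforward estimate) yields directly
$\int\phi\,(u_x^+(t_2,x))^2dx \ge \int\phi\,(u_x^+(t_1,x))^2dx - C_\phi\,(t_2-t_1)$,
after which Lemma \ref{Lem_limit} gives local bounded variation of the full integral in one stroke, with no lower-semicontinuity argument needed. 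So your plan is repairable, but not with the tools you cite: the decisive input is the quantitative estimate \eqref{Eq_minus2eps}, not the statement of Theorem \ref{Th_general}.
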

Theorem \ref{Th_cadlag} can be then directly used to define measures $\mu^+$ and $\mu^-$ (by the Riesz representation theorem) as well as measures $\nu_{\phi}^{+}$ and $\nu_{\phi}^-$, as described in Section \ref{Sec_Intro} 

\section{Our framework and strategy of proof}
To prove Proposition \ref{Prop_L} we proceed in several steps.
\begin{enumerate}
\item For $\zeta, \eta \in \mathbb{R}$ let $\zeta(t), \eta(t)$ be arbitrary characteristics emanating from $\zeta$ and $\eta$, respectively. Recall the quantities 
\begin{eqnarray*}
h(t) &:=& \eta(t) - \zeta(t),\\
p(t) &:=& u(t,\eta(t)) - u(t,\zeta(t)),\\
\omega(t) &:=& p(t)/h(t).
\end{eqnarray*}
Then, as long as $h(t) \neq 0$, the quantities $h(t)$ and $p(t)$ satisfy, due to Proposition \ref{Prop_char},
\begin{eqnarray}
\dot{h} &=& p, \label{Eq_hpwh}\\
\dot{p} &=& \int_{\mathbb{R}} [A(\eta(t),y) - A(\zeta(t),y)][au^2(t,y) + bu_x^2(t,y)]dy,\label{Eq_hpwp} \\
\dot{\omega} &=& \frac {\dot{p}}{h} - \omega^2 = - \omega^2 + \frac {1}{h(t)} \int_{\mathbb{R}} [A(\eta(t),y)-A(\zeta(t),y)][au^2(t,y) + bu_x^2(t,y)] dy.\label{Eq_hpww}
\end{eqnarray}
Hence,
\begin{equation}
\omega(\tau) - \omega(\sigma) = - \int_{\sigma}^{\tau} \omega^2(t) + \int_{\sigma}^{\tau}\frac {1}{h(t)} \left( \int_{\mathbb{R}} [A(\eta(t),y)-A(\zeta(t),y)][au^2(t,y) + bu_x^2(t,y)] dy\right) dt,
\label{Eq_omega}
\end{equation}
which, by definition of $K_t$, \eqref{Eq_Ktdef}, can be expressed as
\begin{equation}
\omega(\tau) - \omega(\sigma) = - \int_{\sigma}^{\tau} \omega^2(t) + \int_{\sigma}^{\tau}\left( \int_{\mathbb{R}} K_t(y)[au^2(t,y) + bu_x^2(t,y)] dy\right) dt.
\label{Eq_omega2}
\end{equation}

\item In the following crucial step of the proof we pass to the limit $\eta \to \zeta$ in \eqref{Eq_omega}, to obtain that for a.e. $\zeta$ belonging to a certain set (called $L_T^{unique}$), 
\begin{eqnarray*}
v(\tau) - v(\sigma) = - \int_{\sigma}^{\tau} v^2(t) dt  + \int_{\sigma}^{\tau} \int_{\mathbb{R}} L(\zeta(t),y)  (au^2(t,y) + bu_x^2(t,y))dy dt \\ + \int_{\sigma}^{\tau} C_1 (au^2(t,\zeta(t)) + bu_x^2(t,\zeta(t)))dt,
\end{eqnarray*}
where $0 \le \sigma \le \tau < T$ and $v(\rho) := u_x (\rho, \zeta(\rho))$. In this passage, the critical role is played by various properties of characteristics, which we demonstrate. This step  (especially the main technical result, Lemma \ref{Lem5}) follows the lines of \cite{TCGJ} adapted to a new setting.
\item We conclude that for a.e. $\zeta \in L_T^{unique}$ the derivative $u_x$ evolves, for $t \in [0,T)$, along the characteristics according to the equation:
\begin{equation}
\dot{v}(t) = - v^2(t) + \int_{\mathbb{R}} L(\zeta(t),y)  (au^2(t,y) + bu_x^2(t,y))dy  +  C_1 (au^2(t,\zeta(t)) + bv^2(t)),
\label{vzkropka}
\end{equation}
which proves Proposition \ref{Prop_L}.
\end{enumerate}
Next, we use equation \eqref{vzkropka} to prove Theorem \ref{Th_continuity} as follows.

\begin{enumerate}
\item Equation \eqref{vzkropka} for the Camassa-Holm equation reads: 
$$\dot{v} = u^2 - \frac 1 2 v^2 - P.$$
\item Using this equation we estimate the quantity
$$\left| \int_{[\alpha(t),\beta(t)]} (u_x^-(t,x))^2 dx   - \int_{[\alpha,\beta]} (u_x^-(0,x))^2 dx \right|$$ 
by decomposing intervals $[\alpha,\beta]$ and $\alpha(t),\beta(t)$ with respect to different properties of respective characteristics (unique, unique with bounded difference quotients, other). The central role is again played by properties of characteristics, which we further investigate. Since the estimates are not invariant with respect to the sign of $v$, the fact that the \emph{negative} part of derivative is considered is vital.
\end{enumerate}
To prove Theorem \ref{Th_limit} we show that for $t_2>t_1>0$
\begin{equation*}
\int_{[\alpha(t_2),\beta(t_2)]} (u_x^+(t_2,x))^2 dx \ge \int_{[\alpha(t_1),\beta(t_1)]} (u_x^+(t_1,x))^2 dx + f(t_2 - t_1) 
\end{equation*}
for some sufficiently regular function $f$. This means that the function $$t \mapsto \int_{[\alpha(t),\beta(t)]} (u_x^+(t,x))^2 dx$$ is increasing up to a regular correction which precludes wild oscillations and guarantees existence of the left limit. Since $f$ can be chosen Lipschitz continuous, we obtain in fact $BV_{loc}$ regularity of $t \mapsto \int_{[\alpha(t),\beta(t)]} (u_x^+(t,x))^2 dx$.

Theorem \ref{Th_general} is a relatively straightforward consequence of Theorems \ref{Th_continuity} and \ref{Th_limit} and the fact that for a weak solution $u$ of the Camassa-Holm equation functions $-u(t_0 - t, x)$ and 
$u(t-t_0,x)$ are also weak solutions.

Finally, Theorem \ref{Th_cadlag} follows from Theorem \ref{Th_general} by suitable approximation of function $\phi$ by step functions.

The remaining part of the paper is organised as follows. 
In Section \ref{Sec_fitin} we show that the Camassa-Holm equation and Hunter-Saxton equation fit into our framework. In Section \ref{Sec_Boundedness} we show the crucial properties and estimates on characteristics. In particular, we prove that almost every (in suitable sense) characteristic has the uniqueness property. In Section \ref{Sec_PropL} we obtain the equation for evolution of $u_x$ along characteristics, thus proving Proposition \ref{Prop_L}. In Section \ref{Sec_ContPrel} we prove some auxiliary results, which are applied in Sections \ref{Sec_ProofCont}, \ref{Sec_Th9}, \ref{Sec_ProofThGeneral} and \ref{Sec_Th10} to prove Theorems \ref{Th_continuity}, \ref{Th_limit}, \ref{Th_general} and \ref{Th_cadlag}, respectively.

\section{Camassa-Holm and Hunter-Saxton fit in}
\label{Sec_fitin}
The Camassa-Holm equation, \eqref{eq_weakCH1}, can be written in the form
\begin{equation*}
\mbox{(C-H) }  u_t + uu_x = \int_{\mathbb{R}} \frac 1 2 sgn(x-y)e^{-|x-y|} (u^2(t,y) + \frac 1 2 u_x^2(t,y)) dy 
\end{equation*}
whereas the Hunter-Saxton equation as 
\begin{equation*}
\mbox{(H-S) } u_t + uu_x = \int_{\mathbb{R}} \bold{1}_{(-\infty,x]} (y) \frac 1 2 {u_x^2}(t,y) dy.
\end{equation*}
Hence, they both can be regarded as special cases of equation 
\begin{equation*}
\mbox{(G) } u_t + uu_x = \int_{\mathbb{R}} A(x,y)[ au^2(t,y) + bu_x^2(t,y)]dy
\end{equation*}
with 
\begin{eqnarray*}
\mbox{(C-H) }&& A(x,y) = \frac 1 2 sgn(x-y)e^{-|x-y|}, \quad  a=1, \quad b=\frac 1 2\\
\mbox{(H-S) }&& A(x,y) = \bold{1}_{(-\infty,x]}(y) = \bold{1}_{[0,\infty)}(x-y), \quad a=0, \quad b=\frac 1 2
\end{eqnarray*}
One can note that both for H-S and C-H the kernel $A(x,y)$ can be expressed in the form $A(x,y) = \tilde{A}(x-y)$ for certain $\tilde{A}$.

\begin{lemma}
\label{Lem_Coef}
For both H-S and C-H we have $a\ge 0, b>0$ and coefficients $A$ satisfy Assumptions \ref{Ass1}. 
\end{lemma}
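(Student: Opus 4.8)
The plan is to verify the three required properties separately for each equation; all of them reduce to elementary computations once we exploit the translation-invariant structure $A(x,y)=\tilde A(x-y)$ noted just above the statement. The sign conditions are immediate: for C-H we have $a=1\ge 0$ and $b=\tfrac12>0$, while for H-S we have $a=0\ge 0$ and $b=\tfrac12>0$, so only Assumptions \ref{Ass1} require genuine work.

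For the one-sided difference-quotient bound I would first observe that, writing $A(x,y)=\tilde A(x-y)$, the inequality $\frac{A(x_2,y)-A(x_1,y)}{x_2-x_1}\ge -L$ (which we may assume is tested with $x_2>x_1$, the quotient being invariant under swapping $x_1,x_2$) is equivalent to $\frac{\tilde A(s)-\tilde A(t)}{s-t}\ge -L$ for $s>t$, that is, to the function $z\mapsto \tilde A(z)+Lz$ being nondecreasing on $\mathbb{R}$; note this bound is then automatically uniform in $y$. For H-S, $\tilde A=\mathbf{1}_{[0,\infty)}$ is the Heaviside function, which is itself nondecreasing, so the bound holds with $L=0$. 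For C-H, $\tilde A(z)=\tfrac12\,\mathrm{sgn}(z)e^{-|z|}$ equals $\tfrac12 e^{-z}$ for $z>0$ and $-\tfrac12 e^{z}$ for $z<0$; in either case $\tilde A'(z)=-\tfrac12 e^{-|z|}\ge -\tfrac12$, while across $z=0$ the function jumps \emph{upward} from $-\tfrac12$ to $+\tfrac12$. Hence $z\mapsto \tilde A(z)+\tfrac12 z$ is nondecreasing and the bound holds with $L=\tfrac12$.

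For the continuity requirement, the key point is that $\int_{\mathbb{R}} A(x,y)f(y)\,dy=(\tilde A * f)(x)$ is a convolution of $f\in L^1(\mathbb{R})$ with the \emph{bounded} kernel $\tilde A$ (indeed $\|\tilde A\|_{L^\infty}\le\tfrac12$ for C-H and $\|\tilde A\|_{L^\infty}=1$ for H-S). I would then invoke continuity of translation in $L^1$: writing $(\tilde A * f)(x+h)-(\tilde A * f)(x)=\int_{\mathbb{R}}\tilde A(z)\,[f(x+h-z)-f(x-z)]\,dz$ and estimating the right-hand side in absolute value by $\|\tilde A\|_{L^\infty}\,\|\tau_{-h}f-f\|_{L^1}$, which tends to $0$ as $h\to 0$. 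This yields (in fact uniform) continuity of $x\mapsto\int_{\mathbb{R}} A(x,y)f(y)\,dy$ for both kernels.

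The only point requiring any care is the jump discontinuity of the C-H kernel at $z=0$ when checking the difference-quotient bound; but since that jump is upward, it only reinforces the monotonicity of $\tilde A(z)+\tfrac12 z$, so it poses no real obstacle. Accordingly, I expect the entire lemma to be a routine verification rather than a step with a substantive difficulty.
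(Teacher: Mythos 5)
Your proof is correct, but it takes a somewhat different route from the paper's. For the one-sided bound, the paper works directly with the two-variable kernel: for C-H it decomposes $A(x_2,y)-A(x_1,y)$ into the sign-difference term $\tfrac12\left(\mathrm{sgn}(x_2-y)-\mathrm{sgn}(x_1-y)\right)e^{-|x_2-y|}$, which is nonnegative, plus the term $\tfrac12\,\mathrm{sgn}(x_1-y)\left(e^{-|x_2-y|}-e^{-|x_1-y|}\right)$, which is bounded below by $-(x_2-x_1)$ via Lipschitz continuity of $e^{-|x|}$. Your reformulation as monotonicity of $z\mapsto\tilde A(z)+Lz$ encodes exactly the same two facts (upward jump at $z=0$, derivative bounded below by $-\tfrac12$ elsewhere), but packages them through the translation-invariant structure $A(x,y)=\tilde A(x-y)$, which makes the H-S and C-H verifications instances of one statement. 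For the continuity condition the divergence is more substantive: the paper argues by dominated convergence for C-H (and, for H-S, by direct inspection of $x\mapsto\int_{-\infty}^x f$), whereas you exploit the convolution structure and continuity of translations in $L^1$, obtaining the stronger conclusion of \emph{uniform} continuity with a single argument covering both kernels. Both routes are elementary and complete; yours generalizes immediately to any bounded kernel of the form $\tilde A(x-y)$ with $\tilde A+L\,\mathrm{id}$ nondecreasing, while the paper's is self-contained and avoids invoking the $L^1$-translation lemma.
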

\begin{proof}
The conditions $a \ge 0, b>0$ are obvious.  
To show that $A$  satisfies Assumptions \ref{Ass1} we proceed as follows. Let $x_1 < x_2$. Then for Hunter-Saxton
\begin{equation*}
A(x_2,y) - A(x_1,y) = \bold{1}_{[x_1,x_2]}(y) \ge 0.
\end{equation*}
For Camassa-Holm, on the other hand, 
\begin{eqnarray*}
A(x_2,y) - A(x_1,y) &=& \frac 1 2 [ sgn(x_2 -y) e^{-|x_2-y|} - sgn(x_1 -y) e^{-|x_1-y|}]  \\
&=& \frac 1 2  (sgn(x_2 - y) - sgn(x_1 - y)) e^{-|x_2 - y|} +  \frac 1 2 sgn(x_1 - y) (e^{-|x_2-y|} - e^{-|x_1-y|}).
\end{eqnarray*}
Now, the first term is nonnegative, and the second is bounded from below by $-(x_2-x_1)$ due to the fact that $e^{-|x|}$ is Lipschitz with constant $1$. Finally, for $f \in L^1(\mathbb{R})$ the continuity of $$x \mapsto \int_{-\infty}^x f(y)dy$$ is obvious, whereas continuity of $$x \mapsto \int_{\mathbb{R}} \frac 1 2 sgn(x-y)e^{-|x-y|} f(y)dy$$ follows by the Lebesgue dominated convergence theorem.
This proves continuity of the mapping $x \mapsto \int_{\mathbb{R}} A(x,y)f(y)dy$ for $f \in L^1(\mathbb{R})$ for C-H and H-S.
\end{proof}
Consequently, by Proposition \ref{Prop_char}, weak solutions of both H-S and C-H  satisfy the system of characteristics
\begin{equation*}
\begin{cases}
\dot{x} = u, \\
\dot{u} = \int_{\mathbb{R}} A(x,y) [au^2(y) + bu_x^2(y)]dy.
\end{cases}
\end{equation*}

\begin{lemma}
$K_t$ for the Camassa-Holm and Hunter-Saxton equations can be, for $\eta>\zeta$, expressed in the form \eqref{Eq_Kt}-\eqref{Eq_L3}.
\end{lemma}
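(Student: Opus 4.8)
The plan is to exploit the convolution structure $A(x,y)=\tilde A(x-y)$ shared by both equations, which reduces $K_t$ to a difference quotient of $\tilde A$,
\[
K_t(y)=\frac{\tilde A(\eta(t)-y)-\tilde A(\zeta(t)-y)}{h(t)},
\]
and then to separate the single jump discontinuity of $\tilde A$ at the origin — which is responsible for the singular transport term $L_t^{1}$ — from the remaining Lipschitz part, to which a first-order Taylor expansion in the increment $h(t)$ applies. The $\eta$-independent main term of that expansion will become $L$, and the Taylor remainder, being controlled by the Lipschitz constant of $\tilde A'$, will supply $L_t^{3}$.

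For Hunter--Saxton the claim is immediate. Here $\tilde A(x)=\mathbf{1}_{[0,\infty)}(x)$, so for $\zeta(t)<\eta(t)$ one has $A(\eta(t),y)-A(\zeta(t),y)=\mathbf{1}_{(\zeta(t),\eta(t)]}(y)$ and hence $K_t(y)=\tfrac{1}{h(t)}\mathbf{1}_{(\zeta(t),\eta(t)]}(y)$. Thus \eqref{Eq_Kt}--\eqref{Eq_L3} hold with $L\equiv 0$, $C_1=1$, and $L_t^{2}\equiv L_t^{3}\equiv 0$ (so $C_2=C_3=0$), the only discrepancy with $\mathbf{1}_{[\zeta(t),\eta(t)]}$ being at the single point $y=\zeta(t)$, which is negligible since $K_t$ enters the analysis only through integrals against $L^1$ densities.

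The substance is the Camassa--Holm kernel $\tilde A(x)=\tfrac12\,\mathrm{sgn}(x)e^{-|x|}$. First I would record its jump, $\tilde A(0^+)-\tilde A(0^-)=1$, and split off the corresponding step function by writing $\tilde A(x)=\tfrac12\,\mathrm{sgn}(x)+r(x)$ with $r(x):=\tfrac12\,\mathrm{sgn}(x)\bigl(e^{-|x|}-1\bigr)$. The step-function part reproduces $L_t^1$ with $C_1=1$, because for $y\notin\{\zeta(t),\eta(t)\}$ one has $\tfrac{1}{h(t)}\bigl[\tfrac12\mathrm{sgn}(\eta(t)-y)-\tfrac12\mathrm{sgn}(\zeta(t)-y)\bigr]=\tfrac{1}{h(t)}\mathbf{1}_{[\zeta(t),\eta(t)]}(y)$. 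The remainder $r$ is of class $C^{1,1}$: a direct check of one-sided derivatives at $0$ gives $r'(x)=-\tfrac12 e^{-|x|}$, which is Lipschitz with constant $\tfrac12$ since $|e^{-|x_1|}-e^{-|x_2|}|\le|x_1-x_2|$. I then set the $\eta$-independent term $L(\zeta(t),y):=r'(\zeta(t)-y)=-\tfrac12 e^{-|\zeta(t)-y|}$; reassuringly, with $a=1,b=\tfrac12$ this makes the $L$-integral in \eqref{Eq_PropL} equal $-P$, matching $\dot v=u^2-\tfrac12 v^2-P$. By the fundamental theorem of calculus,
\[
\frac{r(\eta(t)-y)-r(\zeta(t)-y)}{h(t)}-r'(\zeta(t)-y)=\frac{1}{h(t)}\int_0^{h(t)}\bigl(r'(\zeta(t)-y+s)-r'(\zeta(t)-y)\bigr)\,ds,
\]
whose absolute value is at most $\tfrac{1}{h(t)}\int_0^{h(t)}\tfrac12 s\,ds=\tfrac14 h(t)$; this is exactly $L_t^{3}$ with $C_3=\tfrac12$, while $L_t^{2}\equiv 0$, $C_2=0$.

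I expect the only genuinely non-routine point to be the correct bookkeeping at the kernel's discontinuity: isolating the unit jump of $\tilde A$ as the coefficient $C_1=1$ and verifying that the leftover $r$ is not merely Lipschitz but $C^{1,1}$, so that the Taylor remainder is $O(h(t))$ \emph{uniformly in} $y$ rather than only $O(1)$. Everything else is elementary. I would also explicitly flag that the identity is asserted for $y\notin\{\zeta(t),\eta(t)\}$ (equivalently, for a.e.\ $y$), which is all that is needed since the decomposition is used inside the integral $\int_{\mathbb{R}}K_t(y)[au^2+bu_x^2]\,dy$ in \eqref{Eq_omega2}.
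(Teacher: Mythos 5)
Your proposal is correct, and it proves the lemma by a genuinely different decomposition than the paper's. The paper never splits the kernel $\tilde A$ itself; it manipulates the difference $\mathrm{sgn}(\eta(t)-y)e^{-|\eta(t)-y|}-\mathrm{sgn}(\zeta(t)-y)e^{-|\zeta(t)-y|}$ directly, via the product-type splitting $\bigl(\mathrm{sgn}(\eta(t)-y)-\mathrm{sgn}(\zeta(t)-y)\bigr)e^{-|\eta(t)-y|}+\mathrm{sgn}(\zeta(t)-y)\bigl(e^{-|\eta(t)-y|}-e^{-|\zeta(t)-y|}\bigr)$, and then rewrites the second summand as an integral of $\frac{d}{ds}e^{-|s-y|}$ over $[\zeta(t),\eta(t)]$ to extract $L(\zeta(t),y)=-\frac12 e^{-|\zeta(t)-y|}$; this costs it two extra indicator-bounded corrections ($K_t^{12}$ and $K_t^{22}$, so a nonzero $L_t^2$ with $C_2=2$) besides the $O(h(t))$ term $K_t^{21}$. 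Your additive splitting $\tilde A=\frac12\,\mathrm{sgn}+r$ with $r\in C^{1,1}$, $r'(x)=-\frac12 e^{-|x|}$, isolates the jump first, so the singular term $L_t^1$ comes out exactly ($C_1=1$), $L_t^2\equiv 0$, and the entire smooth contribution is handled by one application of the fundamental theorem of calculus together with the global Lipschitz bound on $r'$, uniformly in $y$. What your route buys: the mechanism is transparent ($C_1$ is the jump height of the kernel, $L$ is the derivative of its regular part), the bookkeeping is lighter, and the argument generalizes verbatim to any kernel of the form step-plus-$C^{1,1}$, in keeping with the general framework (G), whereas the paper's computation is tied to the explicit Camassa--Holm kernel. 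Two small remarks: your a.e.-in-$y$ caveat is well placed --- in fact the paper's own identity also fails at $y=\zeta(t)$, where (with the convention $\mathrm{sgn}(0)=0$) its right-hand side sums to $e^{-h(t)}/h(t)$ while $K_t(\zeta(t))=e^{-h(t)}/(2h(t))$; this is harmless in both proofs since $K_t$ only ever appears under integrals against $L^1$ densities. Also, your remainder bound $\frac14 h(t)$ actually yields $C_3=\frac14$ rather than $\frac12$, though any larger constant serves equally well.
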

\begin{proof}
The case of Hunter-Saxton is straightforward. For Camassa-Holm we obtain
\begin{eqnarray*}
K_t(x) &=& \frac {A(\eta(t),y)-A(\zeta(t),y)}{h(t)}\\
&=& \frac 1 {2h(t)} \left( sgn(\eta(t)-y)e^{-|\eta(t)-y|} -  sgn(\zeta(t)-y) e^{-|\zeta(t)-y|} \right) \\
&=& \frac {1}{2h(t)} \left( (sgn(\eta(t)-y) - sgn(\zeta(t)-y)) e^{-|\eta(t)-y|} + sgn(\zeta(t)-y) (e^{-|\eta(t)-y|} - e^{-|\zeta(t)-y|})  \right)\\ 
&=& - \frac 1 2 e^{-|\zeta(t)-y|} + K_t^{11}(y)+K_t^{12}(y) - K_t^{21}(y) - K_t^{22}(y),
\end{eqnarray*}
where
\begin{eqnarray*}
K_t^{11}(y) &=& \frac {1}{h(t)} \bold{1}_{[\zeta(t), \eta(t)]} (y), \\ 
K_t^{12}(y) &=& \frac {1}{h(t)} \bold{1}_{[\zeta(t), \eta(t)]} (y) (e^{- |\eta(t) -y |}-1),\\ 
K_t^{21}(y) &=& \frac{1}{2h(t)} \int_{\zeta(t)}^{\eta(t)} (sgn(\zeta(t)-y)sgn(s-y) )(e^{-|s-y|}  -  e^{-|\zeta(t)-y|})ds, \\
K_t^{22}(y) &=&  \frac{1}{2h(t)} \int_{\zeta(t)}^{\eta(t)} (sgn(\zeta(t)-y)sgn(s-y) -1) e^{-|\zeta(t)-y|}ds.
\end{eqnarray*}
Indeed,
\begin{equation*}
K_t^{11}(y) + K_t^{12}(y) = \frac {1}{h(t)} \bold{1}_{[\zeta(t), \eta(t)]} e^{- |\eta(t) -y |} = \frac {1}{2h(t)} (sgn(\eta(t)-y) - sgn(\zeta(t)-y)) e^{-|\eta(t)-y|}
\end{equation*}
and
\begin{eqnarray*}
&&- \frac 1 2 e^{-|\zeta(t)-y|} - K_t^{21}(y) - K_t^{22}(y)\\
&=& - \frac 1 2 e^{-|\zeta(t)-y|} -\frac{1}{2h(t)} \int_{\zeta(t)}^{\eta(t)} (sgn(\zeta(t)-y)sgn(s-y) e^{-|s-y|}  -  e^{-|\zeta(t)-y|})ds \\
&=& -\frac{1}{2h(t)} \int_{\zeta(t)}^{\eta(t)} sgn(\zeta(t)-y)sgn(s-y) e^{-|s-y|}ds \\
&=& \frac{sgn(\zeta(t)-y)}{2h(t)} \int_{\zeta(t)}^{\eta(t)} -sgn(s-y) e^{-|s-y|}ds \\
&=& \frac{sgn(\zeta(t)-y)}{2h(t)} \int_{\zeta(t)}^{\eta(t)} \frac d {ds} e^{-|s-y|}ds \\
&=&\frac{sgn(\zeta(t)-y)}{2h(t)} (e^{-|\eta(t)-y|} - e^{-|\zeta(t)-y|}).
\end{eqnarray*}

For these terms we easily obtain the following estimates
\begin{eqnarray*}
|K_t^{12}| &\le& \bold{1}_{[\zeta(t),\eta(t)]}(y),\\ 
|K_t^{21}|&\le& \frac 1 2 \sup_{s \in [\zeta(t),\eta(t)]} |e^{-|s-y|} - e^{-|\zeta(t) - y|}| \le h(t), \\
|K_t^{22}| &\le&   \bold{1}_{[\zeta(t),\eta(t)]}(y).\\
\end{eqnarray*}
Hence, denoting
\begin{eqnarray*}
L(\zeta(t),y) &:=& - \frac 1 2 e^{- |\zeta(t)-y|},\\
L_t^1(y) &:=& K_t^{11}(y),\\
L_t^2(y) &:=& K_t^{12}(y) + K_t^{22}(y),\\
L_t^3(y) &:=& K_t^{21}(y)
\end{eqnarray*}
we conclude.
\end{proof}

\section{Boundedness of $\omega$ along characteristics}
\label{Sec_Boundedness}
In this section we prove the most important results related to characteristics. Let us begin by introducing the leftmost and rightmost characteristics.

\begin{lemma}
\label{Lem_CharRL}
Let $u: [0,T] \times  \mathbb {R} \to \mathbb{R}$ be a locally bounded continuous function. Let $\{x_{\alpha}\}_{\alpha \in A}$ be a family of functions satisfying, for $t \in [0,T]$,
\begin{eqnarray*}
\dot{x}_{\alpha}(t) &=& u(t,x_{\alpha}(t)),\\
x_{\alpha}(0) &=& x.
\end{eqnarray*}
Then function $y(t):= \sup_{\alpha \in A} x_{\alpha}(t)$ satisfies
\begin{equation*}
\dot{{y}}(t) = u(t,y(t)).
\end{equation*}
Similarly, function $z(t):= \inf_{\alpha \in A} x_{\alpha}(t)$ satisfies
\begin{equation*}
\dot{{z}}(t) = u(t,z(t)).
\end{equation*}
\end{lemma}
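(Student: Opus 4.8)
The plan is to prove that $y$ is itself a characteristic --- the \emph{rightmost} one --- by the classical comparison argument for ODEs with merely continuous right-hand side; $z$ then follows by a symmetric argument. First I would record the regularity. Each $x_\alpha$ solves the integral equation $x_\alpha(t) = x + \int_0^t u(s,x_\alpha(s))\,ds$ and is therefore $C^1$. On $[0,T]$ all trajectories start at the common point $x$, and they remain in a common bounded strip (immediate when $u$ is bounded, as in our applications, where $\|u(t,\cdot)\|_{\infty}\le \|u_0\|_{H^1}$); denoting by $M$ the supremum of $|u|$ on that strip, every $x_\alpha$ is $M$-Lipschitz with $|x_\alpha(t)-x|\le Mt$. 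Hence the family is uniformly bounded and equi-Lipschitz on $[0,T]$, so $y(t)=\sup_\alpha x_\alpha(t)$ is finite and $M$-Lipschitz, and thus differentiable at a.e. $t$. For the infimum one writes $z(t) = -\sup_\alpha(-x_\alpha(t))$ and notes that each $-x_\alpha$ is a characteristic for the field $\tilde u(t,w):=-u(t,-w)$, reducing the statement for $z$ to that for $y$.

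The key step is to produce, for each fixed $t_0\in[0,T]$, a genuine characteristic touching $y$ from below at $t_0$. I would take a maximizing sequence $\alpha_n$ with $x_{\alpha_n}(t_0)\to y(t_0)$; by Arzel\`a--Ascoli (uniform boundedness and equicontinuity) a subsequence converges uniformly on $[0,T]$ to some $\bar x$, and passing to the limit in the integral equation --- here the continuity of $u$ is essential --- shows that $\bar x$ solves $\dot{\bar x}=u(t,\bar x(t))$ with $\bar x(0)=x$. By construction $\bar x(t_0)=y(t_0)$, while $x_{\alpha_n}(t)\le y(t)$ for all $t$ gives, in the limit, $\bar x(t)\le y(t)$ for every $t\in[0,T]$.

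Finally I would match derivatives. The function $g:=\bar x - y$ satisfies $g\le 0$ on $[0,T]$ with $g(t_0)=0$, so $t_0$ is a global maximum of $g$; at every interior $t_0$ where $y$ (hence $g$) is differentiable one obtains $g'(t_0)=0$, that is $\dot y(t_0)=\dot{\bar x}(t_0)=u(t_0,\bar x(t_0))=u(t_0,y(t_0))$. Since such $t_0$ form a full-measure subset of $(0,T)$, the Lipschitz function $y$ satisfies $y(t)=x+\int_0^t u(s,y(s))\,ds$; as the integrand $s\mapsto u(s,y(s))$ is continuous, the right-hand side is $C^1$, and I conclude that $\dot y(t)=u(t,y(t))$ holds for \emph{every} $t$. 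The symmetric argument --- a characteristic touching $z$ from above, with $t_0$ now a minimum of the difference --- yields $\dot z(t)=u(t,z(t))$.

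I expect the main obstacle to be the attainment/compactness step: the supremum need not be realized by any single member of the family, so one must extract a limiting trajectory $\bar x$ and verify that it is again a characteristic. This is precisely where the continuity of $u$ (rather than any Lipschitz dependence, which would trivialize nonuniqueness) is used, and where the uniform-in-$\alpha$ Lipschitz and boundedness estimates are needed to invoke Arzel\`a--Ascoli.
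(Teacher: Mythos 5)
Your proof is correct, but it takes a genuinely different route from the paper's. The paper never extracts a limiting trajectory: it argues directly on the supremum, fixing a point $(t,y(t))$, using continuity of $u$ to get $|u(s,w)-u(t,y(t))|<\epsilon$ on a small space-time box, and then observing that every \emph{near}-maximizing trajectory $x_\alpha$ (one with $x_\alpha(t)$ close to $y(t)$) moves with velocity $u(t,y(t))\pm\epsilon$ on that short time interval; taking the supremum over such $\alpha$ gives the two-sided bound $y(s)\ge y(t)+(s-t)u(t,y(t))-\epsilon|s-t|$ for $s$ on \emph{both} sides of $t$, which pins down $\dot y(t)=u(t,y(t))$ at every point of differentiability, with no compactness argument at all. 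Your proof instead invokes Arzel\`a--Ascoli to produce an actual characteristic $\bar x$ touching $y$ from below at $t_0$ and then applies the first-derivative test to $\bar x - y$. What your route buys is extra information --- the value $y(t_0)$ is attained by a genuine characteristic (a Kneser-type attainability statement), and you make explicit the final upgrade from ``a.e.'' to ``everywhere'' via the integral equation, a step the paper leaves implicit; what the paper's route buys is brevity and elementarity, avoiding subsequence extraction entirely. One shared caveat: both arguments rest on the family being uniformly bounded and equi-Lipschitz, which does not follow from mere local boundedness of $u$ (a locally bounded continuous field can have solutions through one point whose supremum is infinite in finite time); you are in fact more candid than the paper in flagging that this is justified by the intended application, where $\sup|u|\le \|u_0\|_{H^1}$ confines all trajectories to a common strip.
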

\begin{proof}
Function $y(t)$ is Lipschitz continuous as supremum of a family of uniformly Lipschitz continuous functions (see e.g. \cite[Proposition 3.5]{TCGJ}).  
Fix $t \in (0,T)$ and $\epsilon > 0$. Let $\delta>0$ be so small that for every $(s,x) \in [t-\delta,t+\delta] \times [y(t)- 2\delta u(t,y(t)),y(t)+ 2\delta u(t,y(t))]$ we have $|u(s,x) - u(t,y(t))|<\epsilon$.
Then for any $\alpha$ such that $x_{\alpha}(t)>y(t)- \frac \delta {10}  u(t,y(t))$ and  $s \in (t-\delta,t+\delta)$ we obtain 
$$|x_{\alpha}(s) - x_{\alpha}(t) - (s-t)u(t,y(t))| \le |s-t| \sup_r {|u(r,x_\alpha(r)) - u(t,y(t))|} < \epsilon|s-t|.$$
Consequently,
$$y(s) \ge x_{\alpha}(s) \ge x_{\alpha}(t) + (s-t)u(t,y(t)) - \epsilon |s-t|.$$
Taking the supremum over $\alpha$ we obtain
$$y(s) \ge y(t) + (s-t)u(t,y(t)) - \epsilon |s-t|.$$
Hence, if $y(\cdot)$ is differentiable in $t$ then
$$u(t,y(t)) - \epsilon \le \dot{y}(t) \le u(t,y(t)) + \epsilon.$$
By arbitrariness of $\epsilon$ we conclude. The proof for $z(t)$ is analogous.
\end{proof}

\begin{corollary}
\label{Cor_lefrightchar}
\rm
Let $u$ be a weak solution of (G). Then for every $t_0\ge 0$ and $\zeta \in \mathbb{R}$ there exists the rightmost characteristic emanating from $\zeta$ at time $t_0$. It is defined as the unique characteristic $\zeta^r(t)$, which satisfies $\zeta^r(t_0)=\zeta$ and $\zeta^r(t) \ge \zeta(t)$ for every $t\ge t_0$ and every characteristic $\zeta(\cdot)$ emanating from $\zeta$ at time $t_0$. Similarly, there exists the leftmost characteristic, $\zeta^l$ emanating from $\zeta$ at time $t_0$, which is defined analogously.  
\end{corollary}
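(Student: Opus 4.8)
The plan is to obtain $\zeta^r$ and $\zeta^l$ as, respectively, the pointwise supremum and infimum over the \emph{entire} family of characteristics issuing from $\zeta$ at time $t_0$, and then to invoke Lemma \ref{Lem_CharRL} to show that these extremal envelopes are themselves characteristics. First I would fix $t_0 \ge 0$ and $\zeta \in \mathbb{R}$ and reduce to the setting of Lemma \ref{Lem_CharRL}, whose emanation time is $0$, by passing to the time-translated function $\tilde u(s,x) := u(t_0+s,x)$, which is again a weak solution of (G) on $[0,\infty)$. By Proposition \ref{Prop_char} the family $\mathcal{F}$ of characteristics $x_\alpha(\cdot)$ of $u$ with $x_\alpha(t_0)=\zeta$ is nonempty.

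Next I need to secure the hypotheses of Lemma \ref{Lem_CharRL}: continuity of $u$, which is immediate from the definition of weak solution, and, crucially, that the family $\mathcal{F}$ is uniformly Lipschitz on each compact time interval so that the pointwise supremum and infimum are finite. This follows from the a priori bound $M_T := \sup_{t \in [t_0, t_0+T]} \|u(t,\cdot)\|_{L^\infty} < \infty$: every $x_\alpha$ satisfies $|\dot{x}_\alpha(t)| = |u(t,x_\alpha(t))| \le M_T$, so all members of $\mathcal{F}$ pass through $\zeta$ at time $t_0$ with speed at most $M_T$, remain in the compact region $\{(t,x) : t \in [t_0, t_0+T],\ |x-\zeta| \le M_T T\}$, and are $M_T$-Lipschitz.

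With these in hand I would define $\zeta^r(t) := \sup_{\alpha} x_\alpha(t)$ and $\zeta^l(t) := \inf_{\alpha} x_\alpha(t)$, both finite and Lipschitz. By Lemma \ref{Lem_CharRL} applied to $\mathcal{F}$ (after the time translation above), $\zeta^r$ and $\zeta^l$ each satisfy $\dot{y}(t) = u(t,y(t))$, and since every $x_\alpha(t_0)=\zeta$ we have $\zeta^r(t_0)=\zeta^l(t_0)=\zeta$; hence both are themselves characteristics emanating from $\zeta$ at $t_0$, i.e. members of $\mathcal{F}$. By construction $\zeta^r(t) \ge x_\alpha(t)$ and $\zeta^l(t) \le x_\alpha(t)$ for every $\alpha$ and every $t \ge t_0$, which is exactly the required extremality. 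Uniqueness is immediate: any characteristic dominating all of $\mathcal{F}$ must in particular dominate $\zeta^r \in \mathcal{F}$ while being dominated by it, and hence coincides with $\zeta^r$; the analogous argument handles $\zeta^l$.

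The only genuinely delicate point is verifying that $\mathcal{F}$ is uniformly Lipschitz, i.e. the local $L^\infty$-control of $u$ that keeps the characteristics within a bounded region and prevents the supremum and infimum from being infinite. Once this is secured, Lemma \ref{Lem_CharRL} performs all the structural work and the remainder is the elementary two-sided domination argument for uniqueness.
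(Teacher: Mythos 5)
Your construction --- taking the pointwise supremum/infimum over the whole family $\mathcal{F}$ of characteristics through $(t_0,\zeta)$ and invoking Lemma \ref{Lem_CharRL} --- is exactly the paper's route: the paper's proof is the one-liner ``immediate consequence of Lemma \ref{Lem_CharRL} combined with \cite[Lemma 3.1]{DafHS}.'' Your time translation, the uniform Lipschitz bound via the local $L^\infty$ control of $u$ (which makes the envelopes finite and Lipschitz), and the two-sided domination argument for uniqueness are all correct filler for what the paper leaves implicit.

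The one step you assert without justification is that the envelopes $\zeta^r,\zeta^l$ are themselves \emph{characteristics}, i.e.\ members of $\mathcal{F}$. Lemma \ref{Lem_CharRL} only yields that they solve $\dot y(t) = u(t,y(t))$; but a characteristic in the sense of Proposition \ref{Prop_char} must additionally satisfy $\frac{d}{dt}u(t,y(t)) = \int_{\mathbb{R}} A(y(t),y')\left[au^2(t,y') + bu_x^2(t,y')\right]dy'$. This third property is not automatic from the first ODE for a merely weak solution $u$; it is the content of \cite[Lemma 3.1]{DafHS} (valid under Assumptions \ref{Ass1}) that \emph{every} Lipschitz solution of $\dot y = u(t,y)$ enjoys this $u$-evolution property, and that citation is precisely the second ingredient in the paper's proof. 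Without it, the claim that $\zeta^r \in \mathcal{F}$ --- which you use both to say the supremum is attained by a characteristic and in the uniqueness argument --- is incomplete. With that one appeal added, your proof is complete and coincides with the paper's.
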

\begin{proof}
Immediate consequence of Lemma \ref{Lem_CharRL} combined with \cite[Lemma 3.1]{DafHS}.
\end{proof}
Next, let us make precise the notion of unique forwards and unique backwards characteristic.
\begin{definition}
\label{Def_UniqueChar}
Let $u$ be a weak solution of (G). 
\begin{itemize}
\item A characteristic $\zeta(\cdot)$ of $u$ on $[0,T]$ is called \emph{unique forwards} if for every characteristic $\eta(\cdot)$ of $u$ such that $\eta(t_0) = \zeta(t_0)$ for some $t_0 \in [0,T]$ we have $\eta(t) = \zeta(t)$ for every $t \in [t_0,T]$, see Fig. \ref{Fig2} left.
\item  A characteristic $\zeta(\cdot)$ of $u$ on $[0,T]$ is called \emph{unique backwards} if for every characteristic $\eta(\cdot)$ of $u$ such that $\eta(t_0) = \zeta(t_0)$ for some $t_0 \in [0,T]$ we have $\eta(t) = \zeta(t)$ for every $t \in [0,t_0]$, see Fig. \ref{Fig2} right. 
\end{itemize}
\end{definition}

\begin{figure}[h!]
\center
\includegraphics[width=8cm]{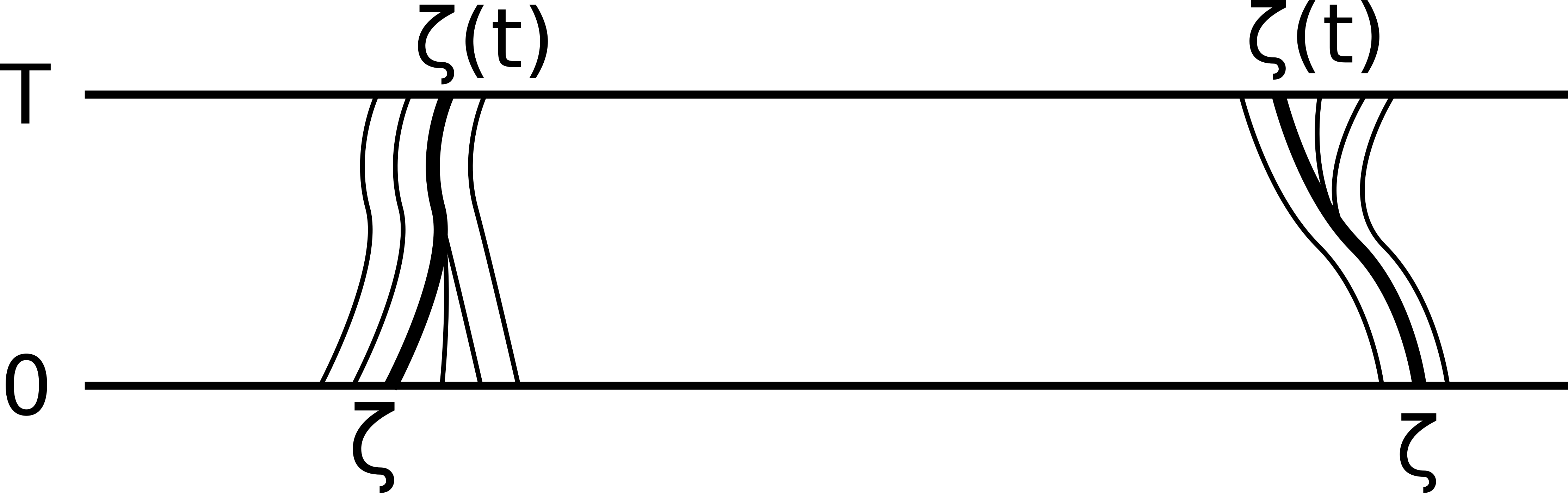}
\caption{Schematic presentation of unique forwards characteristic (left) and unique backwards characteristic (right). Unique forwards characteristics have no branching sites yet may collide with other characteristics. Contrarily, unique backwards do not collide with other characteristics yet may branch.}
\label{Fig2}
\end{figure}

\noindent Let now $\zeta, \eta \in \mathbb{R}$. 
Recalling formulas \eqref{Eq_hpwh}-\eqref{Eq_omega2} we obtain 
\begin{eqnarray}
\dot{\omega} &=& \frac {\dot{p}}{h} - \omega^2 = - \omega^2 + \frac {1}{h(t)} \int_{\mathbb{R}} [A(\eta(t),y)-A(\zeta(t),y)][au^2(t,y) + bu_x^2(t,y)] dy \nonumber \\
&=& - \omega^2 + \int_{\mathbb{R}} K_t(y)[au^2(t,y) + bu_x^2(t,y)] dy. \label{Eq_omegadot}
\end{eqnarray}
Our goal is to identify the sets $S_T \ni \zeta$ on which it is possible to pass to the limit $\eta \to \zeta$ in \eqref{Eq_omegadot}. 
To this end, we define $$C:= \sup_{t \in [0,\infty)} \int_{\mathbb{R}} [au^2(t,y)+bu_x^2(t,y)]dy$$
and begin with an elementary estimate of $\omega$.
\begin{lemma}
\label{Lem2}
Under Assumptions \ref{Ass1}
\begin{eqnarray}
\dot{\omega} &\ge& -\omega^2 - LC, \label{Eq_1}\\
\omega(t_1) &\ge& \sqrt{LC} \tan \left( -\sqrt{LC} (t_1-t_0) + \arctan\left(\frac {\omega(t_0)}{\sqrt{LC}}\right)\right) \label{Eq_2}
\end{eqnarray}
for $0 \le t_0 < t_1$.
\end{lemma}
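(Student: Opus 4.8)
The plan is to derive the differential inequality \eqref{Eq_1} by a direct pointwise bound on the integral term in \eqref{Eq_omegadot}, and then to integrate it by means of a substitution that linearises the Riccati-type inequality.

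First I would establish \eqref{Eq_1}. Starting from \eqref{Eq_omegadot}, which holds on any time interval on which $h(t) \neq 0$ (so that $\omega = p/h$ is differentiable), I observe that the integrand $au^2(t,y) + bu_x^2(t,y)$ is nonnegative, since $a \ge 0$ and $b > 0$. Combining this with the lower bound $K_t(y) \ge -L$ from \eqref{Eq_Ktl} gives
\begin{equation*}
\int_{\mathbb{R}} K_t(y)[au^2(t,y)+bu_x^2(t,y)]dy \ge -L \int_{\mathbb{R}} [au^2(t,y)+bu_x^2(t,y)]dy \ge -LC,
\end{equation*}
where the last inequality is just the definition of $C$. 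Substituting into \eqref{Eq_omegadot} yields $\dot\omega \ge -\omega^2 - LC$, which is \eqref{Eq_1}.

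For \eqref{Eq_2} the key idea is to pass from $\omega$ to the variable $\theta(t) := \arctan\!\big(\omega(t)/\sqrt{LC}\big)$, which converts the quadratic inequality into a linear one. Differentiating and using \eqref{Eq_1} together with $-\omega^2 - LC = -(LC + \omega^2)$, I obtain
\begin{equation*}
\dot\theta = \frac{\sqrt{LC}\,\dot\omega}{LC + \omega^2} \ge \frac{\sqrt{LC}\,\big(-(LC+\omega^2)\big)}{LC+\omega^2} = -\sqrt{LC}.
\end{equation*}
Integrating this from $t_0$ to $t_1$ gives $\theta(t_1) \ge \theta(t_0) - \sqrt{LC}(t_1-t_0)$, that is,
\begin{equation*}
\arctan\!\Big(\frac{\omega(t_1)}{\sqrt{LC}}\Big) \ge -\sqrt{LC}(t_1-t_0) + \arctan\!\Big(\frac{\omega(t_0)}{\sqrt{LC}}\Big).
\end{equation*}
As long as the right-hand side lies in $(-\pi/2,\pi/2)$, I may apply the increasing function $\tan$ to both sides, which produces exactly \eqref{Eq_2}.

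The estimate is only used on the interval before the comparison argument reaches $-\pi/2$; this is natural, since the tangent blows up to $-\infty$ precisely where the characteristics are about to collide ($h \to 0$, $\omega \to -\infty$), and there the lower bound \eqref{Eq_2} becomes vacuous. The main point requiring care is therefore the regularity and range-of-validity bookkeeping: one must note that \eqref{Eq_omegadot}, hence \eqref{Eq_1}, holds only while $h(t) \neq 0$, so that $\theta$ is absolutely continuous there and the monotonicity of $\tan$ on $(-\pi/2,\pi/2)$ can legitimately be invoked. The degenerate case $LC = 0$ is handled separately (or by letting $LC \to 0^+$), where the inequality $\dot\omega \ge -\omega^2$ integrates to the corresponding rational bound.
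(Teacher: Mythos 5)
Your proof is correct, and for the main inequality \eqref{Eq_2} it takes a genuinely different route than the paper. The derivation of \eqref{Eq_1} is identical in both: nonnegativity of $au^2+bu_x^2$ (from $a\ge 0$, $b>0$), the kernel bound \eqref{Eq_Ktl}, and the definition of $C$, applied to \eqref{Eq_omegadot}. For \eqref{Eq_2}, the paper instead solves the comparison ODE $\dot{\tilde\omega}=-\tilde\omega^2-LC$ with $\tilde\omega(t_0)=\omega(t_0)$ explicitly by separation of variables, and then transfers the formula to $\omega$ via a comparison/Gronwall step: from $\frac{d}{dt}(\tilde\omega-\omega)\le -(\omega+\tilde\omega)(\tilde\omega-\omega)\le K|\tilde\omega-\omega|$ and $\tilde\omega(t_0)-\omega(t_0)=0$ it concludes $\tilde\omega\le\omega$. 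That step silently requires a locally uniform bound $K$ on $|\omega+\tilde\omega|$, which holds on compact subintervals where both quantities are finite but is a hypothesis that must be checked. Your substitution $\theta=\arctan\left(\omega/\sqrt{LC}\right)$ uses the same antiderivative that underlies the paper's separation of variables, but applies it directly to the differential \emph{inequality}: since the factor $\sqrt{LC}/(LC+\omega^2)$ is positive, $\dot\theta\ge -\sqrt{LC}$ follows pointwise, integration is immediate, and monotonicity of $\tan$ on $(-\pi/2,\pi/2)$ recovers \eqref{Eq_2}. This bypasses the comparison principle and its boundedness hypothesis entirely, so your argument is shorter and logically lighter; the paper's route is the standard comparison-theorem viewpoint, which has the advantage of working verbatim for perturbed right-hand sides where no monotone linearising substitution is available. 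Your closing remarks on the range of validity ($h\neq 0$ so that $\omega$ is defined and absolutely continuous, the right-hand side of \eqref{Eq_2} lying in $(-\pi/2,\pi/2)$, and the degenerate case $LC=0$) are precisely the bookkeeping the paper leaves implicit, and they do not affect how \eqref{Eq_2} is used later, e.g.\ in Lemma \ref{Lem5}.
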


\begin{proof}
The first inequality follows easily from \eqref{Eq_Ktl} and \eqref{Eq_omegadot} while for the second we calculate:
\begin{eqnarray*}
\dot{\tilde{\omega}} &=& -{\tilde \omega}^2 - LC, \\
\frac{d\tilde{\omega}}{\tilde{\omega}^2 + LC} &=& -dt,\\
\frac {1}{LC} \frac {d\tilde{\omega}}{1+ (\frac {\tilde{\omega}}{\sqrt{LC}})^2} &=& - dt,\\
\frac {1}{\sqrt{LC}} \left[\arctan\left(\frac{\tilde{\omega}(t_1)}{\sqrt{LC}}\right) - \arctan\left(\frac{\tilde{\omega(t_0)}}{\sqrt{LC}}\right)\right] &=& -(t_1-t_0),\\
\tilde{\omega}(t_1) &=& {\sqrt{LC}}\tan \left[ - {\sqrt{LC}}(t_1-t_0) +  \arctan\left(\frac{\tilde{\omega}(t_0)}{\sqrt{LC}}\right)\right].
\end{eqnarray*}
Moreover,
\begin{eqnarray*}
\frac d {dt} (\tilde{\omega} - \omega) \le \omega^2 - \tilde{\omega}^2 = -(\omega+\tilde{\omega})(\tilde{\omega} - \omega) \le K |\tilde{\omega} - \omega|.
\end{eqnarray*}
Taking the initial condition $\tilde{\omega}(t_0) -\omega(t_0) = 0$ and using the Gronwall inequality we obtain $\tilde{\omega} - \omega \le 0$, which proves \eqref{Eq_2}.
\end{proof}

\begin{definition}
\label{Def_OI}
Let  $T_{max} := \frac {\pi}{8\sqrt{LC}}$ and for every $t \in [0,T_{max})$ define 
\begin{eqnarray*}
\Omega(t) &:=& \sqrt{LC} \tan (\sqrt{LC}t - \frac \pi 2), \\
I_t &:=& \{\zeta \in \mathbb{R}: u_x(0,\zeta) \mbox{ is a limit of difference quotients and } u_x(0,\zeta) > \Omega(t) \},\\
I_t^{unique,N} &:=& \{\zeta \in I_t:  \forall_{\eta \in (\zeta- \frac 1 N,\zeta) \cup (\zeta,\zeta+ \frac 1 N),  s \in [0,t)} -N \le \omega(s) \le N \},\\
I_t^{unique} &:=& \bigcup_{N=1}^{\infty} I_t^{unique,N}.
\end{eqnarray*}
\end{definition}

The idea behind these definitions is the following. $I_t$ is (see Proposition \ref{Prop_It}) the set of $\zeta$, which give rise to characteristics which are certainly unique backwards until time $t$ (however, it does not encompass all such unique characteristics, compare Definition \ref{Def_Lt}). The set $I_t^{unique} \subset I_t$ contains characteristics which are additionally unique forwards and have $\omega$ bounded, which could correspond to a considerably smaller set. The main technical result regarding these sets (Lemma \ref{Lem5}) is that $I_t$ and $I_t^{unique}$ are in fact equal up to a set of measure $0$.

\begin{proposition} 
\label{Prop_It}
Sets $I_t$ and $I_t^{unique,N}$ have the following properties.
\begin{enumerate}[a)]
\item If $\zeta \in I_t$ then there exists $N\in \mathbb{N}$ such that $\omega(s) > -N$ for every $\eta \in (\zeta- \frac 1 N, \zeta) \cup (\zeta,\zeta+ \frac 1 N)$ and  $s \in [0,t]$. 
\item If $\zeta \in I_t$ then $\zeta(t)$ is unique backwards on $[0,t]$.
\item For $\zeta \in I_t^{unique,N}$, $\eta \in (\zeta - \frac 1 N, \zeta) \cup (\zeta,\zeta+ \frac 1 N)$ and $s \in [0,t]$ we have $$|\eta-\zeta|e^{-Ns} \le |\eta(s) - \zeta(s)| \le |\eta-\zeta|e^{Ns}.$$
\item For $\zeta \in I_t^{unique,N}$ the characteristic $\zeta(t)$ is unique forwards and backwards.\\
\end{enumerate}
\end{proposition}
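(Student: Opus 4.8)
The plan is to treat the four assertions in order, drawing everything from the differential inequality of Lemma \ref{Lem2} together with the monotonicity built into the leftmost/rightmost characteristics of Corollary \ref{Cor_lefrightchar}; the only genuinely delicate point will be (b). Throughout I use that $\omega = \dot h/h$, so that $h(s) = h(0)\exp\!\left(\int_0^s \omega\right)$, which both keeps $h$ from vanishing when $\omega$ is bounded below and makes $\omega$ blow up to $-\infty$ precisely when two characteristics collide.

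For (a) I would start from the fact that $\omega(0) = (u(0,\eta)-u(0,\zeta))/(\eta-\zeta)$ is a difference quotient at time $0$, so $\omega(0)\to u_x(0,\zeta)$ as $\eta\to\zeta$ because membership in $I_t$ guarantees this limit exists. Since $u_x(0,\zeta)>\Omega(t)$ and $\Omega$ is increasing on $[0,T_{max})$ (the relevant branch of the tangent is increasing), I may fix $N_0$ so large that $\omega(0)>\Omega(t)$ whenever $0<|\eta-\zeta|<1/N_0$. Inserting this into the lower bound \eqref{Eq_2} with $t_0=0$ keeps the tangent's argument strictly above $-\pi/2$ on all of $[0,t]$, so $\omega(s)$ is bounded below by a finite constant $-M$, uniformly in such $\eta$ and in $s$; this is self-consistent, since the lower bound itself prevents $h$ from vanishing, so $\omega$ stays defined. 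Taking $N\ge\max(N_0,M)$ finishes (a).

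The heart of the proposition is (b). I would first record that the extremal characteristics preserve order: if $\xi_1\le\xi_2$ then $\xi_1^r(s)\le\xi_2^r(s)$ and $\xi_1^l(s)\le\xi_2^l(s)$, proved by splicing (a characteristic from $\xi_1$ overtaking $\xi_2^r$ could be glued to $\xi_2^r$ to yield a characteristic from $\xi_2$ lying strictly right of $\xi_2^r$, contradicting maximality). Then I split the collision analysis. If a colliding characteristic $\eta(\cdot)$ has $\eta(0)=\zeta$, I examine a maximal subinterval on which $h=\eta-\zeta>0$ with vanishing endpoints: at the interior maximum $\dot h=0$, hence $\omega=0$, and plugging $\omega=0$ into \eqref{Eq_2} shows $\omega$ cannot reach $-\infty$ (which is forced as $h$ returns to $0$) before elapsed time $\pi/(2\sqrt{LC})>T_{max}$, a contradiction. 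If instead $\eta(0)\neq\zeta$, say $\eta(0)>\zeta$ with $\eta(t_0)=\zeta(t_0)=:x_0$, then order preservation gives $\xi^l(t_0)\le x_0\le\xi^r(t_0)$ for every $\xi\in(\zeta,\eta(0))$, so by connectedness of the set of attainable endpoints each such $\xi$ — in particular $\eta'$ arbitrarily close to $\zeta$ — has a characteristic hitting $x_0$ at $t_0$; its first crossing with $\zeta(\cdot)$ at some $\tau_1\le t_0$ and \eqref{Eq_2} force the difference quotient to satisfy $\omega_{\eta',\zeta}(0)\le\Omega(\tau_1)\le\Omega(t)$. Letting $\eta'\to\zeta^+$ yields $u_x(0,\zeta)\le\Omega(t)$, contradicting $\zeta\in I_t$; the case $\eta(0)<\zeta$ is symmetric. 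This gives backward uniqueness.

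Assertions (c) and (d) are then short. For (c), the defining bound $-N\le\omega(s)\le N$ on $[0,t)$ (extended to $t$ by continuity) together with $\dot h=\omega h$ gives $\ln|h(s)|-\ln|h(0)|=\int_0^s\omega\in[-Ns,Ns]$, which is the claimed two-sided estimate after exponentiating. For (d), backward uniqueness is immediate from (b) since $I_t^{unique,N}\subset I_t$; for forward uniqueness I use (b) to reduce any colliding $\eta(\cdot)$ to a characteristic emanating from $\zeta$ itself, then squeeze it between the barriers $\eta'(\cdot)$, $\eta'\in(\zeta-\tfrac1N,\zeta)\cup(\zeta,\zeta+\tfrac1N)$: because $\eta(\cdot)$ emanates from $\zeta$, the defining bound $|\omega|\le N$ applies to the pair $(\eta(\cdot),\eta'(\cdot))$, whence (c) gives $\eta(s)<\eta'(s)\le\zeta(s)+(\eta'-\zeta)e^{Ns}$ from the right barriers and the reverse inequality from the left, and $\eta'\to\zeta$ pins $\eta(s)=\zeta(s)$ on $[0,t]$. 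I expect the main obstacle to be precisely the different-origin collisions in (b): they are what force the use of extremal characteristics and their monotonicity, and the reduction-by-reachability to near-$\zeta$ starting points, so that the crude bound $\omega(0)\le\Omega(t_0)$ can be set against the pointwise condition $u_x(0,\zeta)>\Omega(t)$; the smallness of $T_{max}$ relative to $\pi/(2\sqrt{LC})$ is what makes the same-origin no-rejoining estimate work.
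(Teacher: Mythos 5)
Your proposal is correct, and for parts a) and c) it is the paper's own argument verbatim: difference quotients at time $0$ converge to $u_x(0,\zeta)>\Omega(t)$, then \eqref{Eq_2} gives the uniform lower bound; and $\int_0^s\omega\in[-Ns,Ns]$ exponentiates to the two-sided estimate on $h$. The genuine difference is in b), and it is in your favour. The paper's entire proof of b) is the one-line observation that $\dot h=\omega h$ and $\omega>-N$ give $h(s)/h(0)\ge e^{-Ns}$; taken literally this only excludes collision with characteristics emanating from points $\eta\ne\zeta$ at distance less than $1/N$, and leaves implicit both the reduction from far starting points and the case of two branches emanating from $\zeta$ itself which later re-merge --- the latter is a genuine violation of Definition \ref{Def_UniqueChar} (a re-merging branch is a characteristic $\eta(\cdot)$ with $\eta(t_0)=\zeta(t_0)$ not coinciding with $\zeta(\cdot)$ below $t_0$) and is \emph{not} covered by any computation with $h(0)\ne 0$. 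Your two supplementary arguments close exactly these holes: the far-origin case via order preservation of extremal characteristics plus production of a colliding characteristic starting arbitrarily close to $\zeta$, against which $\omega(0)\le\Omega(\tau_1)\le\Omega(t)$ contradicts $u_x(0,\zeta)>\Omega(t)$; and the same-origin case via the interior-maximum trick ($\omega(s^*)=0$ at the maximum of $h$, after which \eqref{Eq_2} forbids $\int\omega=-\infty$ within elapsed time $\pi/(2\sqrt{LC})>T_{max}$), an idea that appears nowhere in the paper's proof. In d) the routes are equivalent variants: the paper squeezes $\zeta_2(s_0)-\zeta_1(s_0)\le \eta^r(s_0)-\zeta_1(s_0)\le(\eta-\zeta)e^{Ns_0}$ using domination by the rightmost characteristic $\eta^r$, while you first invoke b) to force the offending characteristic to emanate from $\zeta$ and then pin it between barriers from both sides, using the lower bound of c) to guarantee the barriers are never crossed; your version buys independence from the extremal-characteristic domination property at the cost of using b). One minor point: the "connectedness of the set of attainable endpoints" you invoke need not be imported as a Kneser-type theorem --- in this scalar setting it follows from the elementary splicing of characteristics at crossing times that you already use for order preservation, and phrasing it that way keeps the proof within the paper's toolkit.
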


\begin{proof}
If $\zeta \in I_t$ then $u_x(0,\zeta)>\Omega(t)$ and, consequently, $\omega(0)>\Omega(t)$ for $|\eta - \zeta|$ small enough. Using formula \eqref{Eq_2} we obtain a). To prove b) we observe that $\dot{h} = \omega h$. Condition $\omega(s)>-N$ leads then to $h(s) \slash h(0) \ge e^{-Ns}$. The proof of c) is analogous, while d) follows from c). Indeed, if a characteristic starting from $\zeta$ is nonunique then there exist characteristics $\zeta_1(\cdot), \zeta_2(\cdot)$ satisfying $\zeta_1(0) = \zeta_2(0) = \zeta$ and $\zeta_1(s_0) < \zeta_2(s_0)$ for some $0 < s_0 < t$. Hence, $0< \zeta_2(s_0) - \zeta_1(s_0) \le \eta(s_0) - \zeta_1(s_0) \le (\eta - \zeta) e^{Ns_0}$, where $\eta(\cdot)$ is the rightmost characteristic emanating from $\eta>\zeta$. Taking $\eta \to \zeta^+$, we obtain a contradiction.
\end{proof}

\begin{lemma}
\label{Lem5}
Let $u$ be a weak solution of (G). Fix $\tau<T_{max}$. Then $|I_{\tau} \backslash \bigcup_{N \in \mathbb{N}} I_{\tau}^{unique,N}| = 0$. Equivalently, $I_{\tau} = I_{\tau}^{unique} = \bigcup_{N \in \mathbb{N}} I_{\tau}^{unique,N}$, where both equalities hold up to a set of measure $0$.

\end{lemma}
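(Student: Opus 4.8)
The plan is to isolate the \emph{only} real obstruction to membership in some $I_\tau^{unique,N}$ -- namely an \emph{upper} bound on $\omega$ -- and then to show that this upper bound can fail only on a Lebesgue-null subset of $I_\tau$. First I would dispose of the lower bound, which is free: by Proposition~\ref{Prop_It}a) every $\zeta\in I_\tau$ already admits $N_0=N_0(\zeta)$ with $\omega(s)>-N_0$ for all $\eta\in(\zeta-\tfrac1{N_0},\zeta)\cup(\zeta,\zeta+\tfrac1{N_0})$ and $s\in[0,\tau]$, and by Proposition~\ref{Prop_It}b) such $\zeta$ is unique backwards. Hence $\zeta\in I_\tau\setminus\bigcup_N I_\tau^{unique,N}$ forces, for every $N\ge N_0$, an $\eta$ within $1/N$ of $\zeta$ and a time $s\in[0,\tau)$ with $\omega(s)>N$. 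Writing $B$ for the set of such $\zeta$, it suffices to prove $|B|=0$: on $I_\tau\setminus B$ the quantity $\sup_{\eta,s}\omega(s)$ is finite and, together with the lower bound and Proposition~\ref{Prop_It}c)--d), places $\zeta$ in some $I_\tau^{unique,N}$, which gives the asserted equality up to a null set (the identity $I_\tau^{unique}=\bigcup_N I_\tau^{unique,N}$ being one of definition).

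Next I would extract a coarse, \emph{integrated} bound from monotonicity. Using leftmost characteristics, the time-$s$ flow $\Phi_s:\zeta\mapsto\zeta^l(s)$ is nondecreasing (order of characteristics is preserved, cf. Lemma~\ref{Lem_CharRL}), so by Lebesgue's differentiation theorem for monotone functions it is differentiable with finite derivative a.e.; a Vitali covering argument shows that the set where the upper derivate of $\Phi_\tau$ equals $+\infty$ is null. For $\zeta$ outside this null set the endpoint ratio $h(\tau)/h(0)=\exp\int_0^\tau\omega$ stays bounded as $\eta\to\zeta$, and combined with $\omega\ge-N_0$ this yields $\int_0^\tau\omega^+\,ds\le N_1(\zeta)<\infty$ and hence $h(s)/h(0)\le e^{N_1}$ for all $s\in[0,\tau]$.

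The delicate step, and the main obstacle, is upgrading this integrated control to a genuine pointwise bound on $\omega(s)$. A tall, thin spike of $\omega$ is cheap in the time-integral: by $\dot\omega\ge-\omega^2-LC$ (Lemma~\ref{Lem2}) a spike of height $H$ survives only for a time of order $1/H$, contributing $O(1)$ to $\int_0^\tau\omega^+$, so the expansion bound alone cannot preclude $\sup_s\omega=+\infty$. To rule spikes out I would return to \eqref{Eq_omegadot} and the decomposition \eqref{Eq_Kt}--\eqref{Eq_L3}: the source $\int_{\mathbb{R}}K_t(au^2+bu_x^2)\,dy$ splits as $C_1\,E_{\mathrm{avg}}(t)+R(t)$, where $E_{\mathrm{avg}}(t)=\tfrac1{h(t)}\int_{[\zeta(t),\eta(t)]}(au^2+bu_x^2)\,dy\ge0$ and $R$ is bounded uniformly by \eqref{Eq_L2}, \eqref{Eq_L3} and $\int(au^2+bu_x^2)\,dy\le C$. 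At any interior local maximum of $\omega$ one has $\dot\omega=0$, hence $\omega^2\le C_1E_{\mathrm{avg}}+\|R\|_\infty$; thus an infinite spike of $\omega$ forces the average energy density over the shrinking interval $[\zeta(s),\eta(s)]$ to blow up, i.e. a genuine concentration of energy along the characteristic. (The boundary case $s=0$ is harmless, since for $\zeta\in I_\tau$ the value $u_x(0,\zeta)$ is a finite limit of difference quotients, so $\omega(0)$ stays bounded as $\eta\to\zeta$.)

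Finally I would show that the concentration set is null, whence $|B|=0$. At each fixed time the energy measure $(au^2+bu_x^2)\,dx$ has total mass $\le C$, so by Lebesgue differentiation of a finite measure its upper density is finite off a spatial null set; pulling this back through the monotone, a.e.-finite-derivative flow (the preimage of a null set being null there) and exhausting $[0,\tau]$ by a countable dense set of times confines the set of concentration. The technical weight lies precisely in handling the $\zeta$-dependent time at which the spike occurs -- moving it onto the dense set by continuity of the flow and reconciling this with the covering argument -- and it is here that I would follow and adapt the core estimates of \cite{TCGJ}. This yields $I_\tau=I_\tau^{unique}$ up to a Lebesgue-null set, as claimed.
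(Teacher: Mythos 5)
Your opening reduction (disposing of the lower bound via Proposition~\ref{Prop_It}a and reducing to the null\-ity of the set $B$ where the upper bound on $\omega$ fails for every $N$) matches the paper, and your observation that a spike of height $M$ forces energy concentration at the spike time is correct --- in fact it follows more directly from Cauchy--Schwarz, since $\omega^2(s)=\bigl(p(s)/h(s)\bigr)^2\le \frac{1}{h(s)}\int_{[\zeta(s),\eta(s)]}u_x^2(s,y)\,dy$, with no need for the decomposition \eqref{Eq_Kt}--\eqref{Eq_L3} (which, incidentally, is not assumed in Lemma~\ref{Lem5}: the lemma is stated for (G) under Assumptions~\ref{Ass1} alone, so invoking \eqref{Eq_L1}--\eqref{Eq_L3} silently narrows its scope). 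The genuine gap is your final step. You propose to prove that the concentration set is null by Lebesgue differentiation of the energy measure at each \emph{fixed} time, exhausting $[0,\tau]$ by a countable dense set of times, and ``moving the spike onto the dense set by continuity of the flow.'' This cannot work: the spike occurs at a time $s=s(\zeta,\eta)$ that varies with $\eta$ (hence with $N$), at spatial scales $h(s)\to 0$, and transferring a statement about the local average of $u_x^2(s,\cdot)$ at scale $h(s)$ from $s$ to a nearby rational time requires quantitative \emph{time}-continuity of local energy averages. No such continuity is available for weak solutions of (G) --- one-sided weak time-continuity of $(u_x^{\pm})^2$ for Camassa--Holm is precisely what this paper is written to establish, and it genuinely fails at isolated times (in the peakon--antipeakon example the energy density at the interaction time is a Dirac mass while at all nearby times it is an $L^1$ function). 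Continuity of the flow is beside the point; it is continuity in time of the density that your argument needs and does not have, and deferring this to ``the core estimates of \cite{TCGJ}'' is deferring the proof itself.

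The paper closes exactly this gap by converting the variable-time spike into information at \emph{one fixed time}. Writing $I_\tau=\bigcup_{t>\tau,\,t\in\mathbb{Q}}I_t$ and fixing a rational $t>\tau$, it integrates the Riccati bound \eqref{Eq_2} \emph{forward} from the spike time $s_i\le\tau$ to $t$ (estimate \eqref{Est_h}) to get two consequences: the spike cannot fully die, $\omega_i(t)\ge\sqrt{LC}$ once $M>\sqrt{LC}\tan(3\pi/8)$, and --- crucially --- the interval must \emph{expand} by a factor proportional to the spike height, $h_i(t)\ge\frac{M}{2}h_i(s_i)(t-\tau)$. Note that this is the very same bound $\dot\omega\ge-\omega^2-LC$ that you use only negatively, to argue that spikes are ``cheap'' in the time integral; its positive content, $h(t)=h(s_i)e^{\int_{s_i}^t\omega}\gtrsim M(t-\tau)h(s_i)$, is what you walk past. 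A Vitali covering of $J_t^{bad}$ by such intervals at time $0$, their disjointness, and Jensen's inequality applied at the single time $t$ then yield $C\ge\int_{\mathbb{R}}u_x^2(t,x)\,dx\ge \mathrm{const}\cdot M(t-\tau)\,|J_t^{bad}|$, whence $|J_t^{bad}|=0$ upon letting $M\to\infty$. This forward-expansion mechanism, which trades a spike at an uncontrolled time for length and energy at a fixed later time, is the missing idea in your proposal.
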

\begin{proof}
Observe that $I_{\tau} = \bigcup_{\{t>\tau, t \in \mathbb{Q}\}} I_{t}$. Hence, it is enough to show that for fixed $t \in (\tau,T_{max})$
$$|I_{t} \backslash \bigcup_{N \in \mathbb{N}} I_{\tau}^{unique,N}| = 0.$$
To this end, define

\begin{eqnarray*}
J_t^{bad} &:=& \{\zeta \in I_t: \forall_{\epsilon>0} \forall_{M>0}\exists_{\eta\in (\zeta,\zeta+\epsilon)}\exists_{s \in [0,\tau]}  \omega^{\zeta,\eta}(s)>M  \},\\
\tilde{J}_t^{bad} &:=& \{\zeta \in I_t: \forall_{\epsilon>0} \forall_{M>0}\exists_{\eta\in (\zeta-\epsilon, \zeta) }\exists_{s \in [0,\tau]}  \omega^{\zeta,\eta}(s)>M  \},
\end{eqnarray*}
where $\omega^{\zeta,\eta}$ is calculated for any characteristic $\eta(s)$ starting from $\eta$.
To conclude, it suffices, in view of Proposition \ref{Prop_It}a, to show that $|J_t^{bad}| = 0$ and $|\tilde{J}_t^{bad}| = 0$. 

\noindent For this purpose, define, for every $\zeta,M,\delta$,
$$\Pi_{\zeta}^{M,\delta} := [\zeta,\eta],$$
where $\eta$ is such that $\eta \in (\zeta,\zeta+\delta)$ and there exists $s \in [0,\tau]$ such that $\omega(s) > M$. 
Then 
$$\mathcal{E}^M := \{ \Pi_{\zeta}^{M,\delta}, \delta>0, \zeta \in J_t^{bad}\}$$
is a covering of $J_t^{bad}$. By the Vitali covering theorem we obtain an at most countable  pairwise disjoint family of closed intervals $\mathcal{F}^M \subset \mathcal{E}^M$ such that 
$$J_t^{bad} \subset \bigcup \mathcal{F}^M$$
holds up to a set of measure $0$.
$\mathcal{F}^M$ can be represented as $\bigcup_{i=1}^{\infty}\{[\zeta_i,\eta_i]\}$ with fixed timepoints $s_i$ satisfying $\omega^{\zeta_i,\eta_i}(s_i) > M$, for some fixed characteristics $\eta_i(s)$. 
Using the formula $\tan(\alpha+\beta) = \frac {\tan(\alpha) + \tan(\beta)}{1-\tan(\alpha)\tan(\beta)}$ we obtain, by \eqref{Eq_2}, 
\begin{eqnarray*}
\omega(t_1) &\ge& \sqrt{LC} \frac {\tan ( - \sqrt{LC} (t_1-t_0)) + \frac {\omega(t_0)}{\sqrt{LC}}}{1 - \tan(-\sqrt{LC}(t_1-t_0)) \frac {\omega(t_0)}{\sqrt{LC}}} \\ 
&=& \sqrt{LC} \frac {\sin( - \sqrt{LC} (t_1-t_0)) + \frac {\omega(t_0)}{\sqrt{LC}} \cos( - \sqrt{LC} (t_1-t_0))}{ \cos( - \sqrt{LC} (t_1-t_0)) - \frac {\omega(t_0)}{\sqrt{LC}}\sin( - \sqrt{LC} (t_1-t_0))}.
\end{eqnarray*}
Above, we used also the fact that for fixed $\lambda<t$ and $t_0<t_1\le \lambda <t$ we have 
\begin{equation}
\label{Eq_tan}
\tan(-\sqrt{LC}(t_1-t_0)) \frac {\omega(t_0)}{\sqrt{LC}} < 1-\kappa(t,\lambda)
\end{equation}
for some $\kappa(t,\lambda)>0$ dependent on $\lambda,t$, see definition of $I_t$. Hence, for $t_0 < t_2 \le t$,
\begin{eqnarray*}
\int_{t_0}^{t_2} \omega(t_1)dt_1 \ge  \ln \left( \cos( - \sqrt{LC} (t_2-t_0)) - \frac {\omega(t_0)}{\sqrt{LC}} \sin( - \sqrt{LC} (t_2-t_0)) \right),
\end{eqnarray*}
which means that 
\begin{equation}
\label{Est_h}
h(t_2) = h(t_0)e^{\int_{t_0}^{t_2} \omega(t_1)dt_1} \ge h(t_0) \left[ \cos( - \sqrt{LC} (t_2-t_0)) - \frac {\omega(t_0)}{\sqrt{LC}} \sin( - \sqrt{LC} (t_2-t_0))\right].
\end{equation}
Taking $t_0:= s_i, t_2:=t$ and denoting $\omega_i(s):= \omega^{\zeta_i,\eta_i}(s)$, $h_i(s) := \eta_i(s) - \zeta_i(s)$ we arrive at
\begin{eqnarray*}
h_i(t) &\ge& h_i(s_i) \left[\cos( - \sqrt{LC} (t-s_i)) - \frac {\omega_i(s_i)}{\sqrt{LC}} \sin( - \sqrt{LC} (t-s_i))\right] \\
&\ge& h_i(s_i) \frac {\omega_i(s_i)}{\sqrt{LC}} \sin(\sqrt{LC} (t-s_i)) \ge \frac M 2 h_i(s_i) (t-\tau),
\end{eqnarray*}
where we used the estimate $\sin(x) \ge \frac 1 2 x$ for $x \le \frac \pi 2$. 
Let us now assume that  
$M>\sqrt{LC}\tan\left(\frac 3 8 \pi\right)$. Then, by Lemma \ref{Lem2}, $\omega_i(t) \ge \sqrt{LC}$ and by \eqref{Eq_tan} and \eqref{Est_h} with $t_2:=s_i$ and $t_0:=0$, we have $h_i(s_i)\ge h_i(0) \kappa \cos(\pi/4)$. Using these inequalities, we  calculate
\begin{eqnarray*}
  \int_{\mathbb{R}} u_x^2(t,x)dx &\ge& \sum_{i=1}^{\infty} \int_{\zeta_i(t)}^{\eta_i(t)} u_x^2(t,x)dx\\
&\ge& \sum_{i=1}^{\infty} (\eta_i(t)-\zeta_i(t)) \frac {1}{\eta_i(t)-\zeta_i(t)} \int_{\zeta_i(t)}^{\eta_i(t)} u_x^2(t,x)dx\\
&\ge& \sum_{i=1}^{\infty} (\eta_i(t)-\zeta_i(t)) \left[\frac {1}{\eta_i(t)-\zeta_i(t)} \int_{\zeta_i(t)}^{\eta_i(t)} u_x(t,x)dx\right]^2\\
&\ge& \sum_{i=1}^{\infty} h_i(t) \left[\frac {p_i(t)}{h_i(t)}\right]^2 \\
&=& \sum_{i=1}^{\infty} h_i(t) \omega_i^2 (t) \\
&\ge& LC\sum_{i=1}^{\infty} h_i(t) \\
&\ge& \frac {LCM} 2 \sum_{i=1}^{\infty} h_i(s_i) (t-\tau) \\
&\ge& \frac {\kappa(t,\tau)LCM\cos(\pi/4)(t-\tau)} 2 \sum_{i=1}^{\infty} |\eta_i - \zeta_i| \\
&\ge & \frac {\kappa(t,\tau) LCM\cos(\pi/4)(t-\tau)} 2 |J_t^{bad}|.
\end{eqnarray*}
Since $\int_{\mathbb{R}} u_x^2(t,x)dx$ is bounded by the definition of weak solutions, taking $M \to \infty$ we conclude that $|J_t^{bad}| = 0$. The proof for $\tilde{J}_t^{bad}$ is analogous.
\end{proof}
As mentioned before, set $I_t$ contains \emph{not all} starting points of characteristics unique until time $t$. Similarly, set $I_t^{unique,N}$ does not contain all starting points of characteristics, for which $\omega$ is bounded by $N$. To encompass all (up to a set of measure $0$, see Lemma \ref{lem_IL}) such characteristics, we introduce a second family of sets. 

\begin{definition}
\label{Def_Lt}
\begin{eqnarray*}
L_t^{unique,N}&:=& \{\zeta \in \mathbb{R}: \zeta(s) \mbox{ is unique forwards on $[0,t]$, } \\ && \zeta(s) \mbox{ is a Lebesgue point of } u_x(s,\cdot) \mbox{ for almost every } s \in [0,t] \mbox{ and }\\ &&\forall_{\eta \in (\zeta- \frac 1 N, \zeta) \cup (\zeta,\zeta+ \frac 1 N),  s \in [0,t)} -N \le \omega(s) \le N \} ,\\
L_t^{unique} &:=& \bigcup_{N=1}^{\infty}L_t^{unique,N}.
\end{eqnarray*}
\end{definition}

\begin{lemma}
\label{lem_IL}
For every $N \in \mathbb{N}$ we have $|I_t^{unique,N} \backslash L_t^{unique,N}|=0$ and hence $|I_t^{unique} \backslash L_t^{unique}|=0.$
\end{lemma}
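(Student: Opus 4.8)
The plan is to verify the defining conditions of $L_t^{unique,N}$ one at a time for points $\zeta \in I_t^{unique,N}$, and to observe that only one of them can fail, and only on a null set. First, the quantitative two-sided bound $-N \le \omega(s) \le N$ for all $\eta \in (\zeta - \frac1N, \zeta)\cup(\zeta, \zeta+\frac1N)$ and $s \in [0,t)$ appears verbatim in both definitions, so it is inherited automatically. Next, by Proposition \ref{Prop_It}d every $\zeta \in I_t^{unique,N}$ gives a characteristic that is unique forwards (and backwards) on $[0,t]$, so the uniqueness clause of $L_t^{unique,N}$ holds for \emph{every} point of $I_t^{unique,N}$. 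Thus the only condition left to check is the Lebesgue-point clause, namely that for a.e. $s \in [0,t]$ the point $\zeta(s)$ is a Lebesgue point of $u_x(s,\cdot)$; I claim this holds for almost every $\zeta \in I_t^{unique,N}$, which yields $|I_t^{unique,N} \backslash L_t^{unique,N}| = 0$.

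To prove the claim I would run a Fubini argument in the $(s,\zeta)$-plane using the bi-Lipschitz property of the flow. Since $\zeta(\cdot)$ is unique for $\zeta \in I_t^{unique,N}$, the map $\Phi_s: \zeta \mapsto \zeta(s)$ is well defined, and Proposition \ref{Prop_It}c gives, on any subinterval of $I_t^{unique,N}$ of length $< \frac1N$, the estimate $e^{-Ns}|\zeta-\eta| \le |\Phi_s(\zeta) - \Phi_s(\eta)| \le e^{Ns}|\zeta - \eta|$, i.e. $\Phi_s$ is bi-Lipschitz, so $\Phi_s$ and its local inverse carry Lebesgue-null sets to Lebesgue-null sets. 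Now fix $s$ with $u_x(s,\cdot) \in L^2(\mathbb{R})$ (true for a.e. $s$ by the energy bound) and let $E_s$ be the set of non-Lebesgue points of $u_x(s,\cdot)$; by the Lebesgue differentiation theorem $|E_s| = 0$. Setting $B := \{(s,\zeta) \in [0,t] \times I_t^{unique,N} : \zeta(s) \in E_s\}$, the slice $B^s = \Phi_s^{-1}(E_s)$ is null: covering $I_t^{unique,N}$ by countably many intervals of length $< \frac1N$ and applying the local Lipschitz inverse on each piece gives $|B^s \cap J_k| = 0$, and summation gives $|B^s| = 0$. Hence $|B^s| = 0$ for a.e. $s$, so $|B| = 0$ by Tonelli, and therefore for a.e. $\zeta \in I_t^{unique,N}$ the slice $B_\zeta = \{s : \zeta(s) \in E_s\}$ is null — exactly the Lebesgue-point clause.

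The point needing care — and the likely main obstacle — is the measurability required to invoke Fubini/Tonelli for $B$. The map $\Psi(s,\zeta) = (s,\zeta(s))$ is jointly continuous on each piece (continuous in $s$ and, by the bound above, uniformly Lipschitz in $\zeta$), hence pulls back Borel sets to Borel sets; but $E_s$ is only Lebesgue measurable, and the composition of a merely Lebesgue-measurable function with a continuous map need not be measurable. To sidestep this I would enlarge the space-time bad set $\{(s,y): y \in E_s\}$ to a Borel $G_\delta$ set $\tilde{\mathcal B}$ of planar measure zero (possible since the set lies in a Borel null set and a.e. slice is null), note that $\tilde{\mathcal B}$ still has a.e. slice null, and run the slicing argument for the Borel set $\Psi^{-1}(\tilde{\mathcal B})$; since $B \subseteq \Psi^{-1}(\tilde{\mathcal B})$ and the latter is a Borel null set, $B$ is measurable with $|B|=0$. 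Finally, the statement for the unions follows from the elementary inclusion $I_t^{unique} \backslash L_t^{unique} \subseteq \bigcup_N (I_t^{unique,N} \backslash L_t^{unique,N})$, whose right-hand side is a countable union of null sets.
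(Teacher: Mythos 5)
Your proposal is correct and follows essentially the same route as the paper: the paper's proof is a one-line appeal to a Fubini argument (citing Dafermos), which is exactly the bi-Lipschitz-flow-plus-Fubini/Tonelli slicing argument you carry out in detail, the other two clauses of $L_t^{unique,N}$ being immediate from the definitions and Proposition \ref{Prop_It}. Your additional care about joint measurability (enlarging the space-time set of non-Lebesgue points to a Borel $G_\delta$ null set) is a legitimate filling-in of a point the paper leaves implicit.
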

\begin{proof}
Using Fubini theorem one can show that for almost every $\zeta \in I_t^{unique,N}$ point $\zeta(s)$ is a Lebesgue point of $u_x(s,\cdot)$ for almost every $s$ (see \cite[Page 166]{DafHS}).
\end{proof}

Our goal in the remaining part of the section is the following convergence result, whose proof follows the lines of \cite{TCGJ}.
\begin{theorem}
\label{Th_conv}
Let $u$ be a weak solution of (G) and let $f \in L^{\infty}([0,T], L^1(\mathbb{R}))$. Then there exists a sequence $\epsilon_k \to 0$ such that for almost every $\zeta \in L_T^{unique}$ and $0 \le \sigma \le \tau < T$ we have
\begin{equation}
\label{Eq_toprove0th}
\limsup_{k \to \infty}  \left| \int_{\sigma}^{\tau} \left( \frac {1}{(\zeta+\epsilon_k)^l(t) - \zeta(t)} \int_{\zeta(t)}^{(\zeta+\epsilon_k)^l(t)} f(t,y)dy - f(t,\zeta(t)) \right) dt \right| = 0,
\end{equation}
\end{theorem}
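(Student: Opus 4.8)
The plan is to establish the following stronger statement: there is a sequence $\epsilon_k\to 0$ such that for almost every $\zeta\in L_T^{unique}$ and every $0\le\sigma\le\tau<T$,
$$\int_\sigma^\tau \left| \frac{1}{(\zeta+\epsilon_k)^l(t)-\zeta(t)}\int_{\zeta(t)}^{(\zeta+\epsilon_k)^l(t)} f(t,y)\,dy \;-\; f(t,\zeta(t))\right|\,dt \;\longrightarrow\; 0 .$$
Since the absolute value may be taken outside the time integral, this dominates the expression in \eqref{Eq_toprove0th}. Write $D_\epsilon(t,\zeta)$ for the integrand and recall $L_T^{unique}=\bigcup_N L_T^{unique,N}$. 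Because $\int_\sigma^\tau|D_\epsilon|\,dt\le\int_0^{T'}|D_\epsilon|\,dt$ whenever $\tau\le T'$, and every $\tau<T$ is bounded by some rational $T'<T$, it suffices to prove, for each fixed $N\in\mathbb{N}$ and rational $T'<T$, the $L^1$-in-$\zeta$ convergence $\int_{L_T^{unique,N}}\left(\int_0^{T'}|D_\epsilon(t,\zeta)|\,dt\right)d\zeta\to 0$ as $\epsilon\to 0^+$. A diagonal extraction over the countably many pairs $(N,T')$ then yields a single sequence $\epsilon_k$ along which $\int_0^{T'}|D_{\epsilon_k}(t,\zeta)|\,dt\to 0$ for almost every $\zeta$ and every such $T'$, which is exactly the claim. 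By Fubini it is then enough to bound $\int_{L_T^{unique,N}}|D_\epsilon(t,\zeta)|\,d\zeta$ for fixed $t$ and to integrate the bound in $t$.

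For fixed $t\in[0,T']$ I would change variables in the $\zeta$-integral through the time-$t$ flow map $\Phi_t:\zeta\mapsto\zeta(t)$. The essential input -- and the step I expect to be the main obstacle -- is that $\Phi_t$ is \emph{injective} and bi-Lipschitz on $L_T^{unique,N}$, with $e^{-Nt}\le\Phi_t'\le e^{Nt}$ almost everywhere. The Lipschitz bounds and the lower Jacobian bound follow from the $\omega$-bound $-N\le\omega(s)\le N$ built into Definition \ref{Def_Lt} together with $\dot h=\omega h$ (equation \eqref{Eq_hpwh}), which gives $(\eta-\zeta)e^{-Nt}\le\eta(t)-\zeta(t)\le(\eta-\zeta)e^{Nt}$ for $\eta\in(\zeta,\zeta+\tfrac1N)$; in particular the length $h(t,\zeta):=(\zeta+\epsilon)^l(t)-\zeta(t)$ obeys $\epsilon e^{-Nt}\le h(t,\zeta)\le\epsilon e^{Nt}$ for $\epsilon<\tfrac1N$. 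Injectivity needs a separate comparison argument: if $\zeta_1<\zeta_2$ in $L_T^{unique,N}$ collided at time $t$, one picks an intermediate $\eta\in(\zeta_1,\zeta_1+\tfrac1N)$ with $\eta<\zeta_2$ and derives a contradiction from order preservation of leftmost characteristics (so $\eta^l(t)\le\zeta_2(t)$) against the $\omega$-bound (so $\eta^l(t)>\zeta_1(t)=\zeta_2(t)$); this rests on Proposition \ref{Prop_It} and Corollary \ref{Cor_lefrightchar}. Granting this, the change of variables gives, with $H:=\epsilon e^{Nt}$,
$$\int_{L_T^{unique,N}}|D_\epsilon(t,\zeta)|\,d\zeta \le e^{NT}\int_{\mathbb{R}}\left|\frac{1}{h(t,z)}\int_{z}^{z+h(t,z)} f(t,y)\,dy - f(t,z)\right|dz ,$$
where now $h(t,z)\in[\epsilon e^{-Nt},\epsilon e^{Nt}]$ and the outer Jacobian has been bounded by $e^{NT}$.

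The remaining estimate is a variable-scale Lebesgue differentiation argument, exactly as in \cite{TCGJ} but adapted to the moving intervals. Bounding the integrand by $\tfrac1{h}\int_z^{z+h}|f(t,y)-f(t,z)|\,dy$ and using $\epsilon e^{-Nt}\le h\le H$ to pass to the fixed scale $H$, I obtain the pointwise bound $e^{2NT}\,\tfrac1H\int_z^{z+H}|f(t,y)-f(t,z)|\,dy$. Integrating in $z$, substituting $y=z+s$ and applying Fubini gives the clean bound
$$\int_{\mathbb{R}}\left|\frac{1}{h(t,z)}\int_{z}^{z+h(t,z)} f(t,y)\,dy - f(t,z)\right|dz \le e^{2NT}\,\frac1H\int_0^{H}\bigl\|f(t,\cdot+s)-f(t,\cdot)\bigr\|_{L^1(\mathbb{R})}\,ds .$$
For each fixed $t$ the right-hand side tends to $0$ as $\epsilon\to 0^+$ by continuity of translation in $L^1$, while it is bounded uniformly in $t$ and $\epsilon$ by $2e^{2NT}\|f(t,\cdot)\|_{L^1}\le 2e^{2NT}\|f\|_{L^\infty([0,T],L^1)}$. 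Hence $\int_{L_T^{unique,N}}|D_\epsilon(t,\zeta)|\,d\zeta$ is dominated by an $\epsilon$-independent integrable function of $t$ and tends to $0$ pointwise, so dominated convergence yields $\int_0^{T'}\int_{L_T^{unique,N}}|D_\epsilon(t,\zeta)|\,d\zeta\,dt\to 0$. The measurability needed for Fubini and for the translation modulus follows from the joint measurability of $f$. Combined with the reduction of the first paragraph, this completes the proof.
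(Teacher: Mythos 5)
Your proposal is correct and follows the same architecture as the paper's proof (Proposition \ref{Prop14}, Proposition \ref{Prop_General54}, and a diagonal argument): reduce to fixed $N$ and an $L^1$-in-$\zeta$ statement with the absolute value inside the time integral, change variables along the flow using the Jacobian bounds $e^{-Nt}\le M_t'\le e^{Nt}$ coming from the $\omega$-bound in Definition \ref{Def_Lt}, pass from the moving scale $h(t,\cdot)$ to the fixed scale $H=\epsilon e^{Nt}$, and conclude by continuity of translations in $L^1$, dominated convergence in $t$, and subsequence extraction over the countably many pairs $(N,T')$. The one genuine deviation is how the change of variables is justified: you prove injectivity of the flow on $L_T^{unique,N}$ and invoke a bi-Lipschitz change of variables, whereas the paper never needs injectivity at all. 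It applies the Lebesgue--Stieltjes substitution rule of \cite{FT} to the monotone map $M_t$, writing
\begin{equation*}
\int_{M_t(A)} g\,dz \;=\; \int_A g(M_t(\zeta))\,dM_t(\zeta) \;\ge\; \int_A g(M_t(\zeta))\,M_t'(\zeta)\,d\zeta \;\ge\; c(\tau)\int_A g(M_t(\zeta))\,d\zeta,
\end{equation*}
simply dropping the singular part of $dM_t$. Your injectivity argument (intermediate point $\eta$, order preservation of leftmost characteristics versus the lower bound $\eta^l(t)-\zeta_1(t)\ge(\eta-\zeta_1)e^{-Nt}>0$) is valid, but calling it ``the main obstacle'' overstates its role: for a nondecreasing map any failure of injectivity occurs on intervals of constancy, where $M_t'=0$ almost everywhere, so monotonicity alone already yields the one-sided change-of-variables inequality you actually use. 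What the paper's route buys is exactly this economy; what your route buys is a self-contained argument that does not lean on the substitution rule for Stieltjes integrals. The remaining estimates (the factor $e^{2NT}$ from $H/h$, the Fubini identity producing $\frac 1 H\int_0^H\|f(t,\cdot+s)-f(t,\cdot)\|_{L^1}\,ds$, the uniform domination by $2e^{2NT}\|f\|_{L^{\infty}([0,T],L^1)}$) match the proof of Proposition \ref{Prop_General54} essentially line by line.
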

where $(\zeta + \epsilon_k)^l(t)$ is the leftmost characteristic emanating from $\zeta + \epsilon_k$, see Corollary \ref{Cor_lefrightchar}.
\begin{remark}
Since, by Lemma \ref{lem_IL}, $I_T^{unique} \subset L_T^{unique}$ (up to a set of measure $0$), the convergence \eqref{Eq_toprove0th} holds for almost every $\zeta \in I_T^{unique}$.  
\end{remark}

To prove Theorem \ref{Th_conv} we introduce a more convenient notation.

\begin{definition}
\label{Def_Mt}
Let $u$ be a weak solution of (G). For $\zeta \in \mathbb{R}$ define function $M_t(\zeta)$ by
$$M_t(\zeta):=\zeta^l(t),$$
\end{definition}
where $\zeta^l(t)$ is the leftmost characteristic of $u$ emanating from $\zeta$, see Corollary \ref{Cor_lefrightchar}.

\begin{proposition}
\label{Prop14}
For every $\zeta \in L_t^{unique,N}$ and $\epsilon < 1 \slash N$ we have
\begin{enumerate}[i)]
\item $M_t(\zeta+\epsilon) - M_t(\zeta) \le \epsilon e^{tN}$,
\item $M_t(\zeta+\epsilon) - M_t(\zeta) \ge \epsilon e^{-tN}$,
\item $M_t'(\zeta) \ge e^{-tN}.$
\end{enumerate}
\end{proposition}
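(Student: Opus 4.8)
The plan is to read off all three bounds from the single elementary relation $\dot h = \omega h$ together with the two‑sided bound on $\omega$ that is built into the definition of $L_t^{unique,N}$. First I would record the preliminary observation that, for $\zeta \in L_t^{unique,N}$, the characteristic emanating from $\zeta$ is unique on $[0,t]$: taking $t_0 = 0$ in the definition of \emph{unique forwards} (Definition \ref{Def_UniqueChar}) forces every characteristic through $\zeta$ to coincide with $\zeta(\cdot)$, so in particular $M_s(\zeta) = \zeta^l(s) = \zeta(s)$ for $s \in [0,t]$ (Definition \ref{Def_Mt}, Corollary \ref{Cor_lefrightchar}). For parts i) and ii) I then fix $0 < \epsilon < 1/N$ and let $\eta(\cdot) := (\zeta+\epsilon)^l(\cdot)$ be the leftmost characteristic from $\zeta+\epsilon$, so that $M_s(\zeta+\epsilon) = \eta(s)$. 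Writing $h(s) = \eta(s) - \zeta(s)$, $p(s) = u(s,\eta(s)) - u(s,\zeta(s))$ and $\omega(s) = p(s)/h(s)$, we have $h(0) = \epsilon > 0$, and since $\zeta+\epsilon \in (\zeta,\zeta+\tfrac1N)$ the defining property of $L_t^{unique,N}$ gives $-N \le \omega(s) \le N$ for $s \in [0,t)$.

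Next I would integrate. From \eqref{Eq_hpwh} and the definition of $\omega$ we have $\dot h = p = \omega h$, hence $\tfrac{d}{ds}\ln h(s) = \omega(s)$ as long as $h>0$; the two‑sided bound $|\omega|\le N$ then yields
$$\epsilon\, e^{-Ns} \le h(s) \le \epsilon\, e^{Ns}, \qquad s \in [0,t).$$
In particular $h$ stays strictly positive, so the computation is self-consistent and extends to $s=t$ by continuity. Since $h(t) = M_t(\zeta+\epsilon) - M_t(\zeta)$, evaluating at $s=t$ gives precisely i) and ii). For iii) I would run the identical argument to the left: taking $\eta = \zeta-\epsilon \in (\zeta-\tfrac1N,\zeta)$ and $h(s) = \zeta(s) - (\zeta-\epsilon)^l(s)$, the same estimate yields $M_t(\zeta) - M_t(\zeta-\epsilon) \ge \epsilon\, e^{-Nt}$. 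Thus both one‑sided difference quotients of $M_t$ at $\zeta$ are bounded below by $e^{-Nt}$; since $M_t$ is monotone nondecreasing (leftmost characteristics preserve order) and hence differentiable almost everywhere, wherever $M_t'(\zeta)$ exists it satisfies $M_t'(\zeta) \ge e^{-Nt}$, which is iii).

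The step requiring the most care is the positivity of $h$ on the whole interval, which is needed both for $\omega = p/h$ to be defined and for the bound in Definition \ref{Def_Lt} to be applicable. The reasoning is not circular: $h(0) = \epsilon > 0$, $h$ is continuous, and on any maximal subinterval on which $h>0$ the relation $\dot h = \omega h$ with $|\omega| \le N$ already forces $h(s) \ge \epsilon\, e^{-Ns}$, which prevents $h$ from ever reaching $0$ on $[0,t)$. The only remaining subtlety is the passage from the one‑sided difference-quotient bounds to the pointwise derivative statement in iii); I would phrase this through monotonicity and almost-everywhere differentiability of $M_t$ (so that the bound holds wherever $M_t'$ exists), rather than claiming genuine differentiability at every point of $L_t^{unique,N}$, since the estimates only sandwich the increment $M_t(\zeta+\epsilon)-M_t(\zeta)$ between $\epsilon e^{-Nt}$ and $\epsilon e^{Nt}$ and do not by themselves produce a limit.
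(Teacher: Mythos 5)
Your proof is correct and follows essentially the same route as the paper: both integrate $\dot h = \omega h$ to get $h(t) = h(0)e^{\int_0^t \omega(s)\,ds}$ and use the two-sided bound $|\omega| \le N$ from the definition of $L_t^{unique,N}$ to sandwich $M_t(\zeta+\epsilon)-M_t(\zeta)$, then obtain iii) from the difference-quotient bound (the paper simply divides ii) by $\epsilon$ and takes a $\liminf$, which is the same idea as your monotonicity/a.e.-differentiability phrasing). Your extra care about the positivity of $h$ and the left-sided quotient is sound but not needed beyond what the paper records.
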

\begin{proof}
Using equation $h(t) = h(0)e^{\int_0^t \omega(s)ds}$ we obtain $\epsilon e^{-tN} \le M_t(\zeta+\epsilon) - M_t(\zeta) \le \epsilon e^{tN}$, which proves i) and ii). Dividing ii) by $\epsilon$ and taking $\liminf$ we obtain iii).
\end{proof}

\begin{proposition}
\label{Prop_General54}
Let $u$ be a weak solution of (G). Fix $N\in \mathbb{N}$. Let $c(\tau), C(\tau)$ be functions (possibly dependent on $N$) satisfying $0<c(\tau) \le C(\tau) < \infty$ for every $\tau \in [0,T)$ such that:\begin{enumerate}[i)]
\item $M_t'(\zeta) \ge c(\tau),$
\item $M_t(\zeta+\epsilon) - M_t(\zeta) \le \epsilon C(\tau),$
\item $M_t(\zeta+\epsilon) - M_t(\zeta) \ge \epsilon c(\tau)$
\end{enumerate}
for every $\zeta \in L_T^{unique,N}$ and $0 \le t \le \tau < T$. Let $f \in L^{\infty}([0,T], L^1(\mathbb{R}))$. Then for almost every $\zeta \in L_T^{unique,N}$ and $0 \le \sigma \le \tau < T$ we have
\begin{equation}
\label{Eq_toprove0}
\limsup_{k \to \infty}  \left| \int_{\sigma}^{\tau} \left( \frac {1}{(\zeta+\epsilon_k)^l(t) - \zeta(t)} \int_{\zeta(t)}^{(\zeta+\epsilon_k)^l(t)} f(t,y)dy - f(t,\zeta(t)) \right) dt \right| = 0.
\end{equation}
for some sequence $\epsilon_k \to 0$.
\end{proposition}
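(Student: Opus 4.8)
The plan is to recognise \eqref{Eq_toprove0} as a Lebesgue-differentiation statement for the pull-back of $f$ along the leftmost-characteristic flow and to reduce it to the continuity of translations in $L^1$. Throughout, write $F(t,\zeta):=f(t,M_t(\zeta))$ and
$$G_\epsilon(t,\zeta):=\frac{1}{M_t(\zeta+\epsilon)-M_t(\zeta)}\int_{M_t(\zeta)}^{M_t(\zeta+\epsilon)} f(t,y)\,dy.$$
For $\zeta\in L_T^{unique,N}$ the characteristic is unique forwards, so $\zeta(t)=\zeta^l(t)=M_t(\zeta)$ and $(\zeta+\epsilon)^l(t)=M_t(\zeta+\epsilon)$; hence the bracketed expression in \eqref{Eq_toprove0} is exactly $G_\epsilon(t,\zeta)-F(t,\zeta)$, and by iii) the denominator is strictly positive, so $G_\epsilon$ is well defined. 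Since $\big|\int_\sigma^\tau (G_\epsilon-F)\,dt\big|\le\int_0^\tau |G_\epsilon-F|\,dt$ for all $0\le\sigma\le\tau$, it suffices to fix $\tau<T$ and produce a sequence $\epsilon_k\to 0$ with $\int_0^\tau |G_{\epsilon_k}(t,\zeta)-F(t,\zeta)|\,dt\to 0$ for a.e. $\zeta\in L_T^{unique,N}$; a single sequence valid for every $\tau<T$ then follows by a diagonal argument along $\tau_j\uparrow T$. I would obtain such a sequence from $L^1$-convergence on the product $[0,\tau]\times L_T^{unique,N}$ followed by extraction of an a.e.-convergent subsequence.

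The core is therefore the claim that $\int_0^\tau\!\int_{L_T^{unique,N}} |G_\epsilon-F|\,d\zeta\,dt\to 0$ as $\epsilon\to 0$. Fix $t\le\tau$, set $\delta:=\epsilon C(\tau)$, and use $\epsilon c(\tau)\le M_t(\zeta+\epsilon)-M_t(\zeta)\le\delta$ from ii)-iii) to estimate
$$|G_\epsilon(t,\zeta)-F(t,\zeta)|\le\frac{1}{\epsilon c(\tau)}\int_{M_t(\zeta)}^{M_t(\zeta)+\delta}|f(t,y)-f(t,M_t(\zeta))|\,dy=\frac{C(\tau)}{c(\tau)}\cdot\frac{1}{\delta}\int_{M_t(\zeta)}^{M_t(\zeta)+\delta}|f(t,y)-f(t,M_t(\zeta))|\,dy,$$
where the domain of integration was enlarged to the fixed length $\delta$ (legitimate since $M_t(\zeta+\epsilon)-M_t(\zeta)\le\delta$ and the integrand is nonnegative). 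I then integrate in $\zeta$ and change variables $x=M_t(\zeta)$: since $M_t$ is monotone, injective on $L_T^{unique,N}$, and satisfies $M_t'\ge c(\tau)$ there by i), one has $\int_{L_T^{unique,N}} g(M_t(\zeta))\,d\zeta\le\tfrac{1}{c(\tau)}\int_{\mathbb R} g(x)\,dx$ for every $g\ge 0$. Applying this with $g(x)=\tfrac{1}{\delta}\int_x^{x+\delta}|f(t,y)-f(t,x)|\,dy$ and then Fubini (substituting $y=x+r$) yields
$$\int_{L_T^{unique,N}} |G_\epsilon(t,\zeta)-F(t,\zeta)|\,d\zeta\le\frac{C(\tau)}{c(\tau)^2}\cdot\frac{1}{\delta}\int_0^{\delta}\|f(t,\cdot+r)-f(t,\cdot)\|_{L^1(\mathbb R)}\,dr.$$

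Finally I pass to the limit in time. For a.e. $t$ the slice $f(t,\cdot)$ lies in $L^1(\mathbb R)$, so by continuity of translation in $L^1$ the right-hand side tends to $0$ as $\delta=\epsilon C(\tau)\to 0$; moreover it is bounded by $\tfrac{2C(\tau)}{c(\tau)^2}\|f\|_{L^\infty([0,T],L^1)}$, a constant independent of $t$ and $\epsilon$. Dominated convergence on the finite interval $[0,\tau]$ then gives $G_\epsilon\to F$ in $L^1([0,\tau]\times L_T^{unique,N})$, whence $\zeta\mapsto\int_0^\tau|G_\epsilon-F|\,dt$ converges to $0$ in $L^1(d\zeta)$, and extracting a subsequence $\epsilon_k$ along which it converges to $0$ a.e. concludes the proof. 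I expect the main obstacle to be the rigorous justification of the change of variables — specifically the measure-expansion property of $M_t$ on $L_T^{unique,N}$ (equivalently, the area-formula inequality above) and the joint measurability in $(t,\zeta)$ needed for Fubini — together with securing the single $t$-uniform dominating constant that licenses interchanging the time integration with the limit $\epsilon\to 0$.
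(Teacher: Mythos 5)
Your proposal is correct and follows essentially the same route as the paper's own proof: you bound the difference quotient via conditions ii)--iii) by an averaged $L^1$-modulus over a fixed-length interval $\delta=\epsilon C(\tau)$, pass the $\zeta$-integral through the change of variables $x=M_t(\zeta)$ using the expansion property $M_t'\ge c(\tau)$ (the paper justifies exactly this step with the Lebesgue--Stieltjes substitution rule, discarding the singular part of $dM_t$), conclude by continuity of translations in $L^1$, dominated convergence in $t$, and extraction of an a.e.-convergent subsequence. The only cosmetic differences are that you make explicit the reduction $\bigl|\int_\sigma^\tau(\cdot)\,dt\bigr|\le\int_0^\tau|\cdot|\,dt$ and the diagonal argument over $\tau_j\uparrow T$, which the paper handles implicitly (and at the level of Theorem \ref{Th_conv}).
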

\begin{proof}
We show that 
\begin{equation}
\label{Eq_toprove}
\lim_{\epsilon \to 0} \int_{L_T^{unique,N}}  \int_{\sigma}^{\tau} \left( \frac {1}{(\zeta+\epsilon)^l(t) - \zeta(t)} \int_0^{(\zeta+\epsilon)^l(t) - \zeta(t)} |f(\zeta(t)+y,t) - f(\zeta(t),t)|dy \right)dt d\zeta = 0
\end{equation}
Indeed,
\begin{eqnarray*}
&&\int_{L_T^{unique,N}} \left( \frac {1}{(\zeta+\epsilon)^l(t) - \zeta(t)} \int_0^{(\zeta+\epsilon)^l(t) - \zeta(t)} |f(\zeta(t)+y,t) - f(\zeta(t),t)|dy \right) d\zeta \\
&=&\int_{L_T^{unique,N}} \left( \frac {1}{M_t(\zeta+\epsilon) - M_t(\zeta)} \int_0^{M_t(\zeta+\epsilon) - M_t(\zeta)} |f(\zeta(t)+y,t) - f(\zeta(t),t)|dy \right) d\zeta \\ 
&\le&\int_{L_T^{unique,N}} \left( \frac {1}{c(\tau)\epsilon} \int_0^{C(\tau)\epsilon} |f(\zeta(t)+y,t) - f(\zeta(t),t)|dy \right) d\zeta \\ 
&=& \int_{L_T^{unique,N}} g^{\epsilon}(M_t(\zeta))d\zeta =: S_N^{\epsilon}(t),
\end{eqnarray*}
where $g^{\epsilon}(z) := \frac {1}{c(\tau)\epsilon} \int_0^{C(\tau)\epsilon} |f(z+y,t)-f(z,t)|dy$. Now, since $g^{\epsilon}$ is bounded, nonnegative and Borel measurable, we obtain, using the change of variables in the Stjeltjes integral (see \cite{FT}) and neglecting the singular part of measure $dM_t$,
\begin{eqnarray*}
\int_{M_t\left(L_T^{unique,N}\right)} g^{\epsilon}(z)dz &=& \int_{L_T^{unique,N}} g^{\epsilon}(M_t(\zeta))dM_t(\zeta) \\ &\ge& \int_{L_T^{unique,N}} g^{\epsilon}(M_t(\zeta)) M_t'(\zeta) d\zeta \ge c(\tau) \int_{L_T^{unique,N}} g^{\epsilon}(M_t(\zeta))d\zeta.
\end{eqnarray*}
Hence,
\begin{eqnarray*}
S_N^{\epsilon}(t) &\le& \frac {1}{c(\tau)} \int_{M_t\left(L_T^{unique,N}\right)} g^{\epsilon}(z)dz \\
&=& \frac {1}{c(\tau)} \int_{M_t\left(L_T^{unique,N}\right)} \frac {1}{c(\tau)\epsilon} \int_0^{C(\tau)\epsilon} |f(z+y,t)-f(z,t)|dydz \\
&\le& \frac {C(\tau)}{c^2(\tau)} \int_{\mathbb{R}} \frac {1}{C(\tau)\epsilon} \int_0^{C(\tau)\epsilon} |f(z+y,t)-f(z,t)|dydz. \\
\end{eqnarray*}
Now, since $f(\cdot,t) \in L^1(\mathbb{R})$ we obtain 
\begin{equation*}
\lim_{\epsilon \to 0} S_N^{\epsilon}(t) = 0.
\end{equation*}
Moreover, $S_N^{\epsilon}(t) \le 2 \frac {C(\tau)}{c^2(\tau)} \|f\|_{L^{\infty}([0,T],L^1(\mathbb{R}))}$. This implies, by the Lebesgue dominated convergence theorem,
\begin{equation*}
\lim_{\epsilon \to 0^+} \int_{\sigma}^{\tau} S^{\epsilon}_N(t)dt = 0
\end{equation*}
which, after extraction of a subsequence, proves \eqref{Eq_toprove}. 
\end{proof}

\begin{proof}[Proof of Theorem \ref{Th_conv}]
The proof follows immediately by use of Propositions \ref{Prop14}, \ref{Prop_General54} and a diagonal argument (compare \cite[Proposition 5.3]{TCGJ}).
\end{proof}

\section{Proof of Proposition \ref{Prop_L}}
\label{Sec_PropL}
Let $S_T := L_T^{unique} \backslash Z_T$, where $Z_T$ is such that $|Z_T|=0$ and Theorem \ref{Th_conv} holds for every $\zeta \in S_T$. Let $\zeta \in S_T$. Integrating \eqref{Eq_omegadot} from $\sigma$ to $\tau$ we obtain
\begin{eqnarray*}
\omega(\tau) - \omega(\sigma) &=& - \int_{\sigma}^{\tau} \omega^2(t) + \int_{\sigma}^{\tau}\frac {1}{h(t)} \int_{\mathbb{R}} [A(\eta(t),y)-A(\zeta(t),y)][au^2(t,y) + bu_x^2(t,y)] dy dt\\
&=& - \int_{\sigma}^{\tau} \omega^2(t) + \int_{\sigma}^{\tau} \int_{\mathbb{R}} K_t(y)[au^2(t,y) + bu_x^2(t,y)] dy dt\\
&=& - \int_{\sigma}^{\tau} \omega^2(t) dt\\
&&+ \int_{\sigma}^{\tau} \int_{\mathbb{R}} L(\zeta(t),y)[au^2(t,y) + bu_x^2(t,y)] dy dt\\ 
&&+ \int_{\sigma}^{\tau} \int_{\mathbb{R}} C_1 \frac{1}{h(t)}  \bold{1}_{[\zeta(t), \eta(t)]} (y)[au^2(t,y) + bu_x^2(t,y)] dy dt\\
&& + \int_{\sigma}^{\tau} \int_{\mathbb{R}} L_t^2(y)[au^2(t,y) + bu_x^2(t,y)] dy dt \\
&& + \int_{\sigma}^{\tau} \int_{\mathbb{R}} L_t^3(y)[au^2(t,y) + bu_x^2(t,y)] dy dt\\
&=& T_1 + \int_{\sigma}^{\tau} \int_{\mathbb{R}} L(\zeta(t),y)[au^2(t,y) + bu_x^2(t,y)] dy dt +  T_3 + T_4 + T_5.
\end{eqnarray*}
Since  $\zeta \in S_T$ we have the following convergences as $k \to \infty$, where $\eta = \zeta + \epsilon_k$ and $\epsilon_k$ was constructed in Theorem \ref{Th_conv}:
\begin{itemize}
\item $\eta^l(s) \to \zeta(s)$ uniformly on $[0,T]$ by Proposition \ref{Prop14},
\item $\omega(\rho) \to u_x(\rho,\zeta(\rho))$ for almost every $\rho \in [0,T)$ due to the fact that $\zeta(t)$ is a Lebesgue point of $u_x(t,\cdot)$ for almost every $t$,
\item $T_1 \to -\int_{\sigma}^{\tau} u_x^2(t,\zeta(t)) dt$ by boundedness of $\omega$ and the Lebesgue dominated convergence theorem,
\item $T_3 \to C_1\int_{\sigma}^{\tau} (au^2(\zeta(t),t) + bu_x^2(\zeta(t),t))dt $ by Theorem \ref{Th_conv} with $f=au^2+b u_x^2$,
\item $T_4 \to 0$ by estimate $$|T_4| \le  \int_{\sigma}^{\tau} \int_{\mathbb{R}} C_2 \bold{1}_{[\zeta(t),\eta(t)]}(y)[au^2(t,y) + bu_x^2(t,y)] dy dt\le  \frac {C_2} {C_1} \sup_{t \in [0,T]} ( (\zeta+\epsilon_k)(t) - \zeta(t)) |T_3|,$$
\item $T_5 \to 0$ by estimate $$|T_5| \le C_3 \sup_{t \in [0,T]} h(t)  \int_{\sigma}^{\tau} \int_{\mathbb{R}} [au^2(t,y) + bu_x^2(t,y)] dydt,$$
\end{itemize}
where we used estimates \eqref{Eq_L1}-\eqref{Eq_L3}. Using these convergences, we obtain 
\begin{eqnarray*}
u_x(\tau,\zeta(\tau)) - u_x(\sigma,\zeta(\sigma)) = -\int_{\sigma}^{\tau} u_x^2(t,\zeta(t))dt &+& \int_{\sigma}^{\tau} \int_{\mathbb{R}} L(\zeta(t),y)[au^2(t,y) + bu_x^2(t,y)] dydt \\
&+&  C_1\int_{\sigma}^{\tau} (au^2(t,\zeta(t)) + bu_x^2(t,\zeta(t)))dt,
\end{eqnarray*}
which is equivalent to \eqref{Eq_PropL}. Inclusion $S_{T_1} \subset S_{T_2}$ for $T_1 > T_2$ is obvious by construction and $|\mathbb{R} \backslash \bigcup_{T>0} S_T| = 0$ follows by Lemma \ref{Lem5}, Lemma \ref{lem_IL} and the fact that $\bigcup_{t>0} I_t = \mathbb{R}$.

\begin{corollary}
\label{CorCHODE}
A weak solution, $u$, of Camassa-Holm satisfies
\begin{equation}
\label{Eq_dotv}
\dot{v} = u^2 - \frac 1 2 v^2 - P
\end{equation}
where $v(t)=u_x(\zeta(t),t)$, for every $t \in [0,T]$ provided that $\zeta \in S_T$. Moreover, if  
$\zeta \in L_T^{unique,N}$ then $|v| \le N$ on $[0,T]$ and thus for every $\zeta \in S_T$ there exists $N$ such that $|v| \le N$ on $[0,T]$.
\end{corollary}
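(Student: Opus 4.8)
The plan is to specialize the general evolution equation \eqref{Eq_PropL} of Proposition \ref{Prop_L} to the Camassa-Holm equation, for which the relevant constants have already been computed. First I would recall from Section \ref{Sec_fitin} that C-H corresponds to $a=1$, $b=\tfrac12$, and that the decomposition \eqref{Eq_Kt}--\eqref{Eq_L3} holds with $L(\zeta(t),y)=-\tfrac12 e^{-|\zeta(t)-y|}$ and $C_1=1$ (indeed $L_t^1=K_t^{11}=\tfrac{1}{h(t)}\mathbf 1_{[\zeta(t),\eta(t)]}$, so matching \eqref{Eq_L1} forces $C_1=1$). Substituting these values into \eqref{Eq_PropL} gives, for $\zeta\in S_T$,
\[
\dot v(t) = -v^2(t) - \tfrac12\int_{\mathbb R} e^{-|\zeta(t)-y|}\Big(u^2(t,y)+\tfrac12 u_x^2(t,y)\Big)\,dy + u^2(t,\zeta(t)) + \tfrac12 v^2(t).
\]

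The key observation is then that the integral term is exactly $-P$ evaluated along the characteristic. Indeed, by the definition $P=\tfrac12 e^{-|x|}*\big(u^2+\tfrac12 u_x^2\big)$ one has $P(t,\zeta(t))=\tfrac12\int_{\mathbb R} e^{-|\zeta(t)-y|}\big(u^2(t,y)+\tfrac12 u_x^2(t,y)\big)\,dy$, so the middle term above equals $-P(t,\zeta(t))$. Collecting the two quadratic terms in $v$, namely $-v^2+\tfrac12 v^2=-\tfrac12 v^2$, and writing $u=u(t,\zeta(t))$, $P=P(t,\zeta(t))$, I obtain precisely \eqref{Eq_dotv}. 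This part is a direct computation with no obstruction.

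For the bound, suppose $\zeta\in L_T^{unique,N}$. By Definition \ref{Def_Lt} we have $-N\le\omega(s)\le N$ for every $s\in[0,T)$ and every $\eta$ in the punctured neighborhood $(\zeta-\tfrac1N,\zeta)\cup(\zeta,\zeta+\tfrac1N)$. Taking $\eta=\zeta+\epsilon_k\to\zeta$ along the sequence furnished by Theorem \ref{Th_conv} and using that $\zeta(s)$ is a Lebesgue point of $u_x(s,\cdot)$ for almost every $s$, the difference quotient $\omega(s)$ converges to $v(s)=u_x(s,\zeta(s))$ for a.e. $s\in[0,T)$ (this is the same convergence exploited in the proof of Proposition \ref{Prop_L}). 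Hence $-N\le v(s)\le N$ for a.e. $s\in[0,T)$. Finally, \eqref{Eq_dotv} shows that $v$ agrees a.e. with an absolutely continuous function (its integral form has a bounded right-hand side), so the pointwise bound extends by continuity from a.e. $s$ to every $t\in[0,T]$. The mild subtlety here is precisely this passage to the continuous representative: one must note that $v$ is not a priori a genuine pointwise object, since $u_x$ is only defined up to a null set, and it is the integral equation of Proposition \ref{Prop_L} that justifies treating $v$ as continuous. The concluding statement follows since $S_T\subset L_T^{unique}=\bigcup_N L_T^{unique,N}$, so that every $\zeta\in S_T$ lies in some $L_T^{unique,N}$, yielding the corresponding bound $|v|\le N$.
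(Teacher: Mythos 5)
Your proposal is correct and takes essentially the same approach the paper intends: the paper states this corollary without a separate proof, as an immediate specialization of Proposition \ref{Prop_L} to the Camassa-Holm values $a=1$, $b=\tfrac12$, $L(\zeta(t),y)=-\tfrac12 e^{-|\zeta(t)-y|}$, $C_1=1$ computed in Section \ref{Sec_fitin}, with the bound $|v|\le N$ coming from Definition \ref{Def_Lt} by passing to the limit in the difference quotients $\omega$ at Lebesgue points. Your closing remark about passing to the (absolutely) continuous representative of $v$ is a careful and correct elaboration of a point the paper leaves implicit.
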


\section{Continuity theorems for Camassa-Holm -- preliminary results}
\label{Sec_ContPrel}

\noindent{\bf Change of variables formula}

If $g$ is a bounded nonnegative Borel measureable function then (see \cite{FT}) for $0 \le t \le T$
\begin{eqnarray*}
\int_{M_t\left(A\right)} g(z)dz &=& \int_{A} g(M_t(\zeta))dM_t(\zeta).
\end{eqnarray*}
If now $A \subset L_t^{unique}$ then for every $\zeta \in A$ there exists $N$ such that $$\forall_{\eta \in (\zeta- \frac 1 N, \zeta) \cup (\zeta,\zeta+ \frac 1 N),  s \in [0,t)} -N \le \omega(s) \le N.$$
Hence, for $\eta \in (\zeta- \frac 1 N, \zeta) \cup (\zeta,\zeta+ \frac 1 N)$ we have $$e^{-Nt} \le \frac {M_t(\eta) - M_t(\zeta)}{\eta - \zeta} \le e^{Nt}.$$
This implies that  the singular part of measure $dM_t$  satisfies $dM_t^{singular} (A) = 0$ and hence $dM_t$ can be replaced by $M_t' (\zeta) d\zeta$. We obtain
\begin{eqnarray*}
\int_{M_t\left(A \right)} g(z)dz &=& \int_{A} g(M_t(\zeta))dM_t(\zeta) \\ &=& \int_{A } g(M_t(\zeta)) M_t'(\zeta) d\zeta.
\end{eqnarray*}

\noindent {\bf Estimate of ${\bf v^2M_t'}$}
\begin{proposition}
\label{Prop_doublebound}
Let $\zeta(\cdot)$, with $\zeta \in L_T^{unique}$ be a unique characteristic of (C-H)  satisfying for some positive constants $V_1, V_2$,
\begin{itemize}
\item $\dot{v} = u^2 - \frac 1 2 v^2 - P$
\item $-V_2 \le v \le -V_1$ or $V_1 \le v \le V_2$ on $[0,T]$,
\end{itemize}
where $v(t) = u_x(t,\zeta(t))$ and $u(t) = u(t,\zeta(t))$.
Then for every $t \in [0,T]$ we have 
\begin{equation}
\label{Eq_BigN}
e^{-\frac {2t}{V_1}((\sup u)^2 + \sup(P))} \le \frac {v^2(t) M_{t}'(\zeta)}{v^2(0) M_{0}'(\zeta)} \le e^{\frac {2t}{V_1}((\sup u)^2 + \sup(P))} ,
\end{equation}
where $M_t(\eta) = \eta^l(t)$ and $\eta^l(t)$ is the unique leftmost characteristic emanating from $\eta$.
\end{proposition}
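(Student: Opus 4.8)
The plan is to reduce \eqref{Eq_BigN} to a logarithmic bound on the product $\Phi(t):=v^2(t)\,M_t'(\zeta)$, obtained by differentiating $\log\Phi$ along the characteristic and exploiting a cancellation coming from the specific form \eqref{Eq_dotv} of the evolution of $v$. The crucial preliminary fact is the evolution of the spatial Jacobian of the leftmost flow, namely that for $\zeta\in L_T^{unique}$
\begin{equation*}
\frac{M_t'(\zeta)}{M_0'(\zeta)} = \exp\left(\int_0^t v(s)\,ds\right), \qquad \text{equivalently} \quad \frac{d}{dt}\log M_t'(\zeta) = v(t).
\end{equation*}
To establish this I would start from the relation $h_\epsilon(t)=h_\epsilon(0)\exp\!\big(\int_0^t \omega_\epsilon(s)\,ds\big)$, which follows from $\dot h=\omega h$ (see \eqref{Eq_hpwh}), where $h_\epsilon(t)=M_t(\zeta+\epsilon)-M_t(\zeta)$ and $\omega_\epsilon$ is the associated difference quotient. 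Since $\zeta\in L_T^{unique}$, we have $\zeta\in L_T^{unique,N}$ for some $N$, so the $\omega_\epsilon$ are uniformly bounded by $N$; and because $\zeta(s)$ is a Lebesgue point of $u_x(s,\cdot)$ for a.e.\ $s$ and the characteristic is unique, $\omega_\epsilon(s)\to v(s)$ for a.e.\ $s\in[0,t]$. Dividing by $h_\epsilon(0)=\epsilon$ and letting $\epsilon\to 0^+$, dominated convergence together with the differentiability of $M_t$ guaranteed by Proposition \ref{Prop14} yields the stated formula. In particular $v$ is, by Corollary \ref{CorCHODE}, absolutely continuous in $t$ and of constant sign with $|v|\ge V_1>0$, so $\log|v(t)|$ and $\log M_t'(\zeta)$ are both absolutely continuous.

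With this in hand the computation is short. Writing $\log\Phi(t)=2\log|v(t)|+\log M_t'(\zeta)$ and differentiating gives
\begin{equation*}
\frac{d}{dt}\log\Phi(t) = \frac{2\dot v(t)}{v(t)} + v(t).
\end{equation*}
Substituting $\dot v = u^2 - \tfrac12 v^2 - P$ from \eqref{Eq_dotv} produces $\frac{2\dot v}{v}=\frac{2u^2-2P}{v}-v$, so the term $+v$ coming from $M_t'$ cancels the $-v$ coming from the quadratic term, leaving the clean identity
\begin{equation*}
\frac{d}{dt}\log\Phi(t) = \frac{2\big(u^2(t)-P(t)\big)}{v(t)}.
\end{equation*}
This exact cancellation is precisely the reason for weighting $v^2$ by the Jacobian $M_t'(\zeta)$.

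It then remains to bound the right-hand side. Since $P\ge 0$ and $u^2\ge 0$ one has $-\sup P\le u^2-P\le (\sup u)^2$, hence $|u^2(t)-P(t)|\le (\sup u)^2+\sup P$, while the hypothesis $V_1\le|v|\le V_2$ gives $|v(t)|\ge V_1$. Therefore
\begin{equation*}
\left|\frac{d}{dt}\log\Phi(t)\right| \le \frac{2\big((\sup u)^2+\sup P\big)}{V_1}.
\end{equation*}
Integrating over $[0,t]$ and exponentiating yields
\begin{equation*}
e^{-\frac{2t}{V_1}((\sup u)^2+\sup P)} \le \frac{\Phi(t)}{\Phi(0)} = \frac{v^2(t)\,M_t'(\zeta)}{v^2(0)\,M_0'(\zeta)} \le e^{\frac{2t}{V_1}((\sup u)^2+\sup P)},
\end{equation*}
which is exactly \eqref{Eq_BigN}. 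The main obstacle is the first step: rigorously justifying that the spatial Jacobian $M_t'(\zeta)$ of the leftmost flow is differentiable in $t$ with $\frac{d}{dt}\log M_t'(\zeta)=v(t)$, i.e.\ that the limit $\epsilon\to 0$ and the time integration commute. This rests entirely on the fine properties of characteristics developed in Section \ref{Sec_Boundedness} — the uniform bound on $\omega$ for $\zeta\in L_T^{unique,N}$ (Proposition \ref{Prop14}) together with the a.e.\ Lebesgue-point convergence $\omega_\epsilon\to v$. Once these are invoked, the remaining steps are elementary calculus with the ODE \eqref{Eq_dotv}.
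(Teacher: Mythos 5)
Your proof is correct and follows essentially the same route as the paper: the paper likewise derives $M_t'(\zeta)=e^{\int_0^t v(s)\,ds}$ by passing to the limit in $h(t)=h(0)e^{\int_0^t\omega(s)\,ds}$ (using boundedness of $\omega$ and dominated convergence), then exploits the identical cancellation to get $\frac{d}{dt}\ln(v^2M_t')=\frac{2(u^2-P)}{v}$ and concludes by integrating with $|v|\ge V_1$. The only cosmetic difference is that the paper computes $\frac{d}{dt}(v^2M')=2v(u^2-P)M'$ first and then takes the logarithmic derivative, whereas you differentiate $\log\Phi$ directly.
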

\begin{remark}
Estimate \eqref{Eq_BigN} is independent of $V_2$.
\end{remark}

\begin{proof}
We have $\dot{h} = \omega h$  and hence $h(t) = h(0)e^{\int_0^t \omega(s)ds}$.
Using the fact that $\eta(s) \to \zeta(s)$ for every $s \in [0,t]$ and boundedness of $\omega$ on $[0,t]$ we obtain
\begin{equation}
\label{Eq_Mprim}
M'_t (\zeta) = \lim_{\eta \to \zeta} \frac {h(t)}{h(0)} = \lim_{\eta \to \zeta} e^{\int_0^t \omega(s)ds} = e^{\int_0^t v(s)ds},
\end{equation}
where we used Lebesgue dominated convergence theorem to pass to the limit. Thus, we have the equations
\begin{eqnarray*}
\dot{v} &=& u^2 - \frac 1 2 v^2 - P,\\
M'_t(\zeta)&=& e^{\int_0^t v(s)ds},
\end{eqnarray*}
which can be transformed into 
\begin{eqnarray*}
\dot{v} &=& -\frac 1 2 v^2 \left[1+\frac {u^2 - P}{-1/2 v^2}\right],\\
\frac{d}{dt}{M_t'(\zeta)} &=& v(t)M_t'(\zeta).
\end{eqnarray*}
Hence (below, for the sake of readability we skip the indexes $t,\zeta$ by $M$),
\begin{eqnarray*}
\frac d {dt}(v^2 M') = 2v\dot{v}M' + v^2 \dot{M'} = -v^3\left[1+\frac {u^2 - P}{-1/2 v^2}\right] M' + v^3 M' = 2v(u^2 - P)M'.
\end{eqnarray*}
Since both $v$ and $M'$ are bounded and bounded away from $0$, we can calculate the logarithmic derivative:
\begin{equation*}
\frac d {dt} \ln(v^2M') = \frac {1}{v^2M'} \frac d {dt} (v^2 M') = \frac {2(u^2 - P)}{v},
\end{equation*}
which means that
\begin{equation*}
(v^2 M')(t) = (v^2 M')(0) e^{\int_0^t \frac {2(u^2-P)}{v}}.
\end{equation*}
Estimate \eqref{Eq_BigN} follows.
\end{proof}

\noindent{\bf Symmetry of equation}
\begin{lemma}
\label{Lem_bwd}
Let $u$ be a weak solution of (G):
\begin{equation*}
u_t + uu_x = \int_{\mathbb{R}} A(x,y) [au^2(y) + bu_x^2(y)]dy.
\end{equation*}
Then the function $u^{bT}(t,x):= -u(T-t,x)$ satisfies equation
\begin{equation*}
u^{bT}_t + u^{bT}u^{bT}_x = \int_{\mathbb{R}} A(x,y) [a{(u^{bT})}^2(y) + b{(u^{bT}_x})^2(y)]dy.
\end{equation*}
with initial condition $u^{bT}(0,x) = -u(T,x)$.
\end{lemma}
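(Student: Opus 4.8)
The plan is to verify the claimed equation by direct substitution, treating the time-reversal-with-sign-flip transformation $u^{bT}(t,x) := -u(T-t,x)$ as a change of variables and tracking how each term of (G) transforms. First I would compute the partial derivatives of $u^{bT}$ in terms of those of $u$. Writing $s = T-t$ so that $\partial_t s = -1$, the chain rule gives $u^{bT}_t(t,x) = -\partial_t\big(u(T-t,x)\big) = u_t(s,x)$, since the two sign flips (one from the explicit $-$, one from $\tfrac{d}{dt}(T-t) = -1$) cancel. For the spatial derivative, the time argument is inert, so $u^{bT}_x(t,x) = -u_x(s,x)$ and likewise $u^{bT}_{xx}(t,x) = -u_{xx}(s,x)$; in particular $(u^{bT}_x)^2(t,x) = u_x^2(s,x)$.

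Next I would substitute these into each piece of the target equation and check it reduces to (G) evaluated at time $s = T-t$. The left-hand side becomes
\begin{equation*}
u^{bT}_t + u^{bT} u^{bT}_x = u_t(s,x) + \big(-u(s,x)\big)\big(-u_x(s,x)\big) = u_t(s,x) + u(s,x)u_x(s,x),
\end{equation*}
so the advective term recovers its original sign because $u^{bT}$ and $u^{bT}_x$ each pick up a minus sign. On the right-hand side, the integrand is quadratic in $u$ and $u_x$, hence invariant under the sign flip: $a(u^{bT})^2(y) + b(u^{bT}_x)^2(y) = a u^2(s,y) + b u_x^2(s,y)$. Therefore the whole equation for $u^{bT}$ at time $t$ coincides with equation (G) for $u$ at time $s = T-t$, which holds because $u$ is a weak solution of (G). The initial condition is immediate: $u^{bT}(0,x) = -u(T-0,x) = -u(T,x)$.

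The only genuine subtlety is that $u$ is merely a weak solution, so these manipulations must be justified in the sense of distributions rather than pointwise. I would therefore phrase the argument via test functions: for $\psi \in C_c^\infty((0,\infty)\times\mathbb{R})$, the change of variables $t \mapsto T-t$ is a measure-preserving diffeomorphism, and pairing the distributional form of (G) for $u$ against the pushed-forward test function $\psi(T-t,x)$ reproduces exactly the distributional form of the transformed equation; the quadratic nonlinearities on the right are unaffected by the sign flip, and the $uu_x = \partial_x(u^2/2)$ structure together with the single time-derivative makes the bookkeeping of minus signs transparent. I expect the main (though mild) obstacle to be this distributional bookkeeping — confirming that the time reflection composed with the sign change leaves the weak formulation invariant — rather than any analytic difficulty, since all regularity and energy bounds defining a weak solution of (G) are manifestly symmetric under $u \mapsto -u$ and under $t \mapsto T-t$.
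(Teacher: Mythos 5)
Your proof is correct and takes the same approach as the paper, whose entire proof of this lemma is the single word ``Substitution''; you have simply carried out that substitution explicitly, including the sign bookkeeping and the distributional justification via the time-reflected test function $\psi(T-t,x)$.
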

\begin{proof}
Substitution.
\end{proof}

\section{Proof of Theorem \ref{Th_continuity}}
\label{Sec_ProofCont}
\begin{definition}
Let $u$ be a weak solution of (C-H). We define
$$D_t^{unique,N} := \{ \zeta \in L_t^{unique}: \forall_{s \in [0,t]} |u_x(s,\zeta(s))| \le N \}$$
\end{definition}
\noindent By Corollary \ref{CorCHODE} we obtain $L_t^{unique,N} \subset D_t^{unique,N}$. 

\begin{remark}
In general it is not clear whether $L_t^{unique,N} = D_t^{unique,N}$. Strict inclusion might be due to the following effect. Consider, for instance, the function 
\begin{equation*}
u(t,x) = \begin{cases} 0 &\mbox{ if } x/t < 1/2\\
t(x/t - 1/2) &\mbox{ if } 1/2 \le x/t \le 3/2\\
t(3/2 - x/t) &\mbox{ if } 1 \le x/t \le 3/2\\
0 &\mbox{ if } x/t > 3/2
\end{cases}
\end{equation*}
Then although $u_x(t,0) = 0$ for every $t>0$, the best possible uniform bound for difference quotients on the half-line $(t,0), t>0$ is $1/2$.
\end{remark}

\noindent Let now $u$ be a weak solution of (C-H) and fix $\epsilon >0$. Moreover, let $\alpha(\cdot), \beta(\cdot)$ be the leftmost characteristics emanating from $\alpha,\beta$, respectively (for general case see the end of this section). 

{\bf Choose ${\bf T>0}$} so small that 
\begin{itemize}
\item 
\begin{equation}
\label{Eq_es1}
\int_{[\alpha,\beta]\backslash I_T} u_x^2(0,\zeta)d\zeta < \epsilon,
\end{equation}
\item
\begin{equation}
\label{Eq_es111}
T \le  \frac {\pi}{8 \sqrt{LC}} 
\end{equation}
\item
\begin{equation}
\label{Eq_es11}
T \le \epsilon \frac {\sqrt{LC}}{2} (( \sup (u^2) + \sup(P))^{-1} 
\end{equation}
\item $$\Omega(T) < - 4 ((\sup u)^2 + \sup(P))^2,$$
\end{itemize}
where $\Omega(T)$ and $I_T$ are defined in Definition \ref{Def_OI} and the suprema are taken over the compact set confined by the characteristics $\alpha(\cdot), \beta(\cdot)$ and lines $t=0$ and $t=T$   (see Fig. \ref{Fig1}).
\begin{figure}[h]
\center
\includegraphics[width=5.5cm]{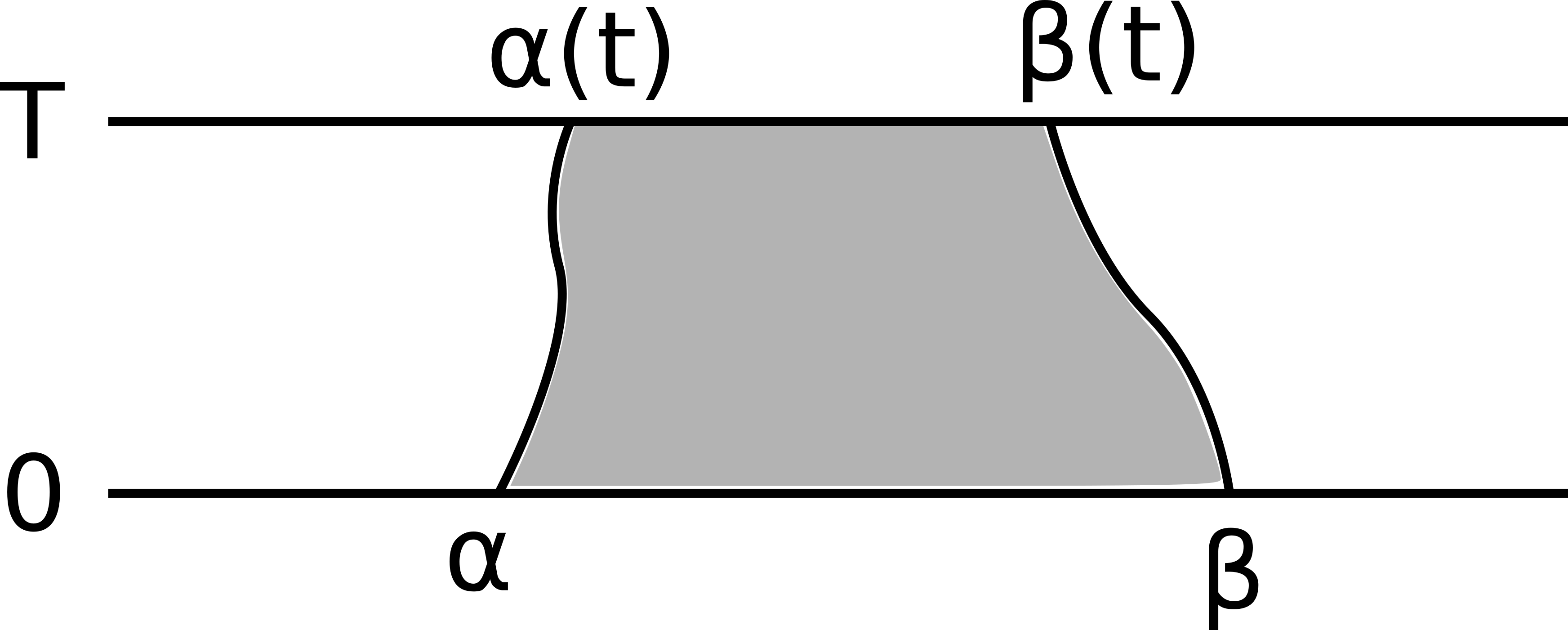}
\caption{Schematic drawing of the area considered in the proof of Theorem \ref{Th_continuity}. The area is confined by characteristics $\alpha(\cdot), \beta(\cdot)$ and straight lines $\{t=0\}, \{t=T\}$.}
\label{Fig1}
\end{figure}

Note that under these assumptions we have $$\int_{[a,b]\backslash I_t} u_x^2(0,\zeta)d\zeta < \epsilon$$ for $t \in [0,T]$ since $I_T \subset I_t$ for $t \in [0,T]$. 

\noindent{\bf Next, choose ${\bf N}$} so that:
\begin{itemize}
\item $$N > \max(1,(2((\sup u)^2 + \sup(P))^2),$$ 
\item  
\begin{equation}
\label{Eq_es12}
N \ge T (\sup(u)^2 + \sup(P)) + \sqrt{LC}
\end{equation}

\item $N$ is so large that (using Lemma \ref{Lem5})
\begin{equation}
\label{Eq_es2}
\int_{I_T \backslash I_T^{unique,N}} u_x^2(0,\zeta)d\zeta < \epsilon
\end{equation}
and, consequently, $\int_{I_T \backslash L_t^{unique,N}} u_x^2(0,\zeta)d\zeta < \epsilon$  and $\int_{I_T \backslash D_t^{unique,N}} u_x^2(0,\zeta)d\zeta < \epsilon$ for every $t \in [0,T]$ (due to inclusions $I_T^{unique,N} \subset L_T^{unique,N} \subset L_t^{unique,N} \subset D_t^{unique,N}$),

\item 
\begin{equation}
\label{Eq_NLC}
N \ge \sqrt{LC} \max( \tan (3\pi / 8) , \tan(\sqrt{LC}T)).
\end{equation}

\end{itemize}
{\bf Finally, choose ${\bf t_\epsilon<T}$} so small that 
\begin{itemize}
\item 
\begin{equation}
\label{Eq_eNt}
|e^{Nt_\epsilon} - 1| < \epsilon, \qquad
|e^{-Nt_\epsilon} - 1| < \epsilon,
\end{equation}
\item 
\begin{equation}
\label{Eq_v2}
t_\epsilon < \frac \epsilon {2N (\sup(u)^2 + \sup(P)+ \frac 1 2 N^2)}.
\end{equation}
\end{itemize} 
Now, {\bf for fixed ${\bf t \in [0,t_\epsilon]}$}, decompose 
\begin{eqnarray*}
[\alpha,\beta] &=& (D_t^{unique,N} \cap [\alpha,\beta])\cup ((I_T  \backslash D_t^{unique,N} )\cap [\alpha,\beta]) \cup ([\alpha,\beta] \backslash I_T), \\ 
\ [\alpha(t),\beta(t)]&=&M_t(D_t^{unique,N} \cap [\alpha,\beta]) \cup M_t((L_t^{unique} \backslash D_t^{unique,N}) \cap [\alpha,\beta]) \cup ([\alpha(t),\beta(t)] \backslash M_t(L_t^{unique}))
\end{eqnarray*}
 and estimate:
\begin{eqnarray*}
&&\left| \int_{[\alpha(t),\beta(t)]} (u_x^-(t,z))^2 dz   - \int_{[\alpha,\beta]} (u_x^-(0,\zeta))^2 d\zeta \right| \\
&\le& \left| \int_{M_t(D_t^{unique,N} \cap [\alpha,\beta])} (u_x^- (t,z))^2 dz - \int_{D_t^{unique,N}\cap [\alpha,\beta]} (u_x^-(0,\zeta))^2 d\zeta \right|\\
&& + \int_{[\alpha,\beta] \backslash I_T} (u_x^-(0,\zeta))^2 d\zeta\\
&& + \int_{(I_T  \backslash D_t^{unique,N} )\cap [\alpha,\beta]} (u_x^-(0,\zeta))^2 d\zeta\\ 
&& + \int_{M_t((L_t^{unique} \backslash D_t^{unique,N}) \cap [\alpha,\beta])} (u_x^- (t,z))^2 dz \\
&& + \int_{[\alpha(t),\beta(t)] \backslash M_t(L_t^{unique})} (u_x^- (t,z))^2 dz  = {\bf I_1} + {\bf I_2} + {\bf I_3} + {\bf I_4} + {\bf I_5}.
\end{eqnarray*}

\noindent{\bf Term ${\bf I_1}$}

\noindent First, observe that for $\zeta \in D_t^{unique,N}$ we have 
\begin{equation}
\label{Eq_absvdot}
|\dot{v}| \le \sup(u)^2 + \sup(P)+ \frac 1 2 N^2.
\end{equation}
Hence, by $|v| \le N$ and \eqref{Eq_absvdot} $$|v^2(t) - v^2(s)| \le |v(t)-v(s)||v(t)+v(s)| \le 2tN (\sup(u)^2 + \sup(P)+ \frac 1 2 N^2),$$
which for $0 \le s<t<t_{\epsilon}$ gives, by \eqref{Eq_v2} 
\begin{equation}
\label{Eq_v2epsilon}
|v^2(t) - v^2(s)| \le \epsilon
\end{equation}
and
\begin{equation}
\label{Eq_vepsilon}
|v(t) - v(s)| \le \frac {\epsilon}{2N}.
\end{equation}
To estimate $I_1$ we calculate:
\begin{eqnarray*} 
I_1&=&\left|\int_{M_t(D_t^{unique,N}\cap [\alpha,\beta])} (u_x^-(t,z))^2 dz  - \int_{D_t^{unique,N}\cap [\alpha,\beta]} (u_x^-(0,\zeta))^2d\zeta \right| \\ 
&\le& \left|\int_{M_t(D_t^{unique,N}\cap [\alpha,\beta]\cap \{u_x(0,\cdot)<0\})} u_x^2(t,z)dz  - \int_{D_t^{unique,N}\cap [\alpha,\beta]\cap \{u_x(0,\cdot)<0\})} u_x^2(0,\zeta)d\zeta \right|\\
&&+\int_{M_t(D_t^{unique,N}\cap [\alpha,\beta]\cap \{u_x(0,\cdot)<0\})} (u_x^+(t,z))^2dz + \int_{M_t(D_t^{unique,N}\cap [\alpha,\beta]\cap \{u_x(0,\cdot)\ge 0\})} (u_x^-(t,z))^2dz \\
&\le& \left|\int_{M_t(D_t^{unique,N}\cap [\alpha,\beta]\cap \{u_x(0,\cdot)<0\})} u_x^2(t,z)dz  - \int_{D_t^{unique,N}\cap [\alpha,\beta]\cap \{u_x(0,\cdot)<0\})} u_x^2(0,\zeta)d\zeta \right| \\
&& + \epsilon(\beta(t) - \alpha(t)) + \epsilon(\beta(t) - \alpha(t)).
\end{eqnarray*}
The last estimate above follows by \eqref{Eq_v2epsilon} and observation that if $u_x$ changes sign along a characteristic (as is the case in two last terms) then there exists $s$ such that $v(s) = 0$. 
Using the change of variables formula from Section \ref{Sec_ContPrel} we estimate further
\begin{eqnarray*}
I_1 &\le&\left| \int_{D_t^{unique,N}\cap [\alpha,\beta]\cap \{u_x(0,\cdot)<0\}} u_x^2(t,M_t(\zeta)M_t'(\zeta)d\zeta - \int_{D_t^{unique,N}\cap [\alpha,\beta]\cap \{u_x(0,\cdot)<0\}} u_x^2(0,\zeta)d\zeta \right| \\ &&+ 2 \epsilon(\beta(t)-\alpha(t)) \\
&\le& \int_{D_t^{unique,N}\cap [\alpha,\beta]\cap \{u_x(0,\cdot)<0\}} |u_x^2(t,M_t(\zeta))M_t'(\zeta) - u_x^2(0,\zeta)|d\zeta + 2 \epsilon(\beta(t)-\alpha(t))\\
&\le& \int_{D_t^{unique,N}\cap [\alpha,\beta]\cap \{u_x(0,\cdot)<0\}} |u_x^2(t,M_t(\zeta))||M_t'(\zeta)-1| d\zeta\\
 &&+ \int_{D_t^{unique,N}\cap [\alpha,\beta]\cap \{u_x(0,\cdot)<0\}} |u_x^2(t,M_t(\zeta)) - u_x^2(0,\zeta)| d\zeta +  2 \epsilon(\beta(t)-\alpha(t)) \\
&\le& \epsilon \int_{D_t^{unique,N}\cap [\alpha,\beta]\cap \{u_x(0,\cdot)<0\}} |u_x^2(t,M_t(\zeta))| d\zeta + \epsilon|\beta-\alpha| + 2 \epsilon(\beta(t)-\alpha(t))\\
&\le& 
\epsilon e^{t_{\epsilon}N} \int_{\mathbb{R}} |u_x^2(t,z)| dz + \epsilon|\beta-\alpha| + 2 \epsilon(\beta(t)-\alpha(t)),
\end{eqnarray*}
where in the last but one estimate we used \eqref{Eq_eNt}, \eqref{Eq_v2epsilon} and in the last estimate we used  the estimate $e^{-Nt} \le M_t'(\zeta)$  and the change of variables formula from Section \ref{Sec_ContPrel}, which lead to 
\begin{equation*}
e^{-Nt} \int_{D_t^{unique,N}\cap [\alpha,\beta] \cap \{u_x(0,\cdot)<0\} } u_x^2(t,M_t(\zeta))) d\zeta \le  \int_{M_t\left(D_t^{unique,N}\cap [\alpha,\beta]\cap  \{u_x(0,\cdot)<0\}\right)} u_x^2(t,z)dz. 
\end{equation*}

Hence,
$$I_1 \le
2 \epsilon \int_{\mathbb{R}} |u_x^2(t,z)| dz + \epsilon|\beta-\alpha| + 2 \epsilon(\beta(t)-\alpha(t)).$$

\noindent{\bf Term ${\bf I_2}$}
$$ 
I_2 \le \int_{[\alpha,\beta]\backslash I_T} u_x^2(0,\zeta)d\zeta < \epsilon
$$
by \eqref{Eq_es1}.

\noindent{\bf Term ${\bf I_3}$}

$$ 
I_3 \le \int_{(I_T \backslash D_t^{unique,N}) \cap [\alpha,\beta]} u_x^2(0,\zeta)d\zeta < \epsilon.
$$
by \eqref{Eq_es2}.

\noindent{\bf Term ${\bf I_5}$}

To begin, let us observe that by Lemma \ref{Lem_bwd} one can define characteristics starting from $t$ going backwards. In particular, for every $\gamma \in [\alpha(t),\beta(t)]$ there exists (in general nonunique) $\zeta_{\gamma}$ such that $\zeta_{\gamma}(t) = \gamma$.
We will show that  almost every $\gamma \in [\alpha(t),\beta(t)]$ satisfying $u_x(t,\gamma)<0$ belongs to $M_t(L_t^{unique})$. This implies directly that $I_5 = 0$. 

\begin{lemma}
\label{Lem_lem}
Let $u_x(t,\gamma)<0$ be a Lebesgue point of $u_x(t,\cdot)$ (difference quotients converge). Take $\kappa \neq \gamma$ so close to $\gamma$ that 
$(u(t,\kappa) - u(t,\gamma))/(\kappa - \gamma) < 0$. Let $\zeta(\cdot) := \zeta_{\gamma}(\cdot)$ and $\eta(\cdot):= \zeta_{\kappa}(\cdot)$. Then

\begin{equation*}
\forall_{s \in [0,t]} \omega(s) < \sqrt{LC}\tan(\sqrt{LC}t).
\end{equation*} 
In particular, $\omega(s)$ is bounded from above along characteristic $\zeta_{\gamma}(s)$ for $s \in [0,t]$ and $\zeta_{\gamma}(\cdot)$ does not branch.
\end{lemma}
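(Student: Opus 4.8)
The plan is to reduce the whole statement to a single one–sided bound on $\omega$: it suffices to prove that $\omega(s) < \sqrt{LC}\tan(\sqrt{LC}t)$ for every $s \in [0,t]$, the only input being that, by the choice of $\kappa$, we have $\omega(t) = (u(t,\kappa)-u(t,\gamma))/(\kappa-\gamma) < 0$. Throughout I may use that $\sqrt{LC}\,t \le \sqrt{LC}\,T \le \pi/8 < \pi/2$, so that all the tangents below lie on the monotone branch of $\tan$ and no singularity is crossed.

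For the bound itself I would argue by contradiction using the comparison estimate \eqref{Eq_2} of Lemma \ref{Lem2}. Suppose $\omega(s_0) \ge \sqrt{LC}\tan(\sqrt{LC}t)$ for some $s_0 \in [0,t]$. Applying \eqref{Eq_2} forward on $[s_0,t]$ gives $\omega(t) \ge \sqrt{LC}\tan\big(-\sqrt{LC}(t-s_0) + \arctan(\omega(s_0)/\sqrt{LC})\big)$. Since $\omega(s_0)/\sqrt{LC} \ge \tan(\sqrt{LC}t)$ and $\arctan$ is increasing, $\arctan(\omega(s_0)/\sqrt{LC}) \ge \sqrt{LC}t$, so the argument of the tangent is at least $-\sqrt{LC}(t-s_0) + \sqrt{LC}t = \sqrt{LC}\,s_0 \ge 0$ and, being a sum of a nonpositive term and something strictly below $\pi/2$, it stays in $[0,\pi/2)$. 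Hence the tangent is nonnegative and $\omega(t) \ge 0$, contradicting $\omega(t) < 0$. This yields the displayed inequality, and in particular the uniform upper bound on $\omega$ along $\zeta_\gamma$ for all admissible $\kappa$ near $\gamma$, since the bound $\sqrt{LC}\tan(\sqrt{LC}t)$ does not depend on $\kappa$.

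For the non-branching claim I would invoke the time-reversal symmetry of Lemma \ref{Lem_bwd}. The function $u^{bt}(s,x) := -u(t-s,x)$ is again a weak solution of (C-H) with the same constants $L$ and $C$, and under $s \mapsto t-s$ the backward characteristics of $u$ through $(t,\gamma)$ become the forward characteristics of $u^{bt}$ through $(0,\gamma)$, the corresponding difference quotient satisfying $\hat\omega(s) = -\omega(t-s)$. In particular $u^{bt}_x(0,\gamma) = -u_x(t,\gamma) > 0$, and since $t < \pi/(2\sqrt{LC})$ forces $\Omega(t) < 0$ (see Definition \ref{Def_OI}) while $\gamma$ is by hypothesis a point where difference quotients converge, we get $\gamma \in I_t$ computed for $u^{bt}$. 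Proposition \ref{Prop_It}(b) applied to $u^{bt}$ then says that the forward characteristic of $u^{bt}$ from $(0,\gamma)$ is unique backward, which, translated back through the symmetry, is exactly the assertion that $\zeta_\gamma$ is unique forward, i.e. does not branch. I note in passing that the displayed inequality itself also drops out of this symmetry: applying \eqref{Eq_2} to $u^{bt}$ with $\hat\omega(0) = -\omega(t) > 0$ gives $\hat\omega(r) > -\sqrt{LC}\tan(\sqrt{LC}t)$ on $[0,t]$, equivalently $\omega(s) < \sqrt{LC}\tan(\sqrt{LC}t)$.

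The main point to watch is the legitimacy of applying \eqref{Eq_2} across $[s_0,t]$: the comparison underlying Lemma \ref{Lem2} presupposes that $\omega$ stays finite, i.e. that $\zeta_\gamma$ and $\zeta_\kappa$ do not collide ($h \neq 0$) on the interval. I would handle this by working on the maximal subinterval ending at the first collision after $s_0$; there the comparison forces $\omega \ge \tilde\omega \ge 0$ (the tangent argument computed above being nonnegative throughout), and then $\dot h = \omega h$ shows $h$ cannot decrease to $0$, so in fact no collision occurs before $t$ and the estimate propagates all the way to $t$. Where convenient one may also fix the leftmost/rightmost representatives of $\zeta_\gamma$ and $\zeta_\kappa$ to keep their relative order under control. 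This separation-of-characteristics bookkeeping is the only genuinely delicate step; the analytic core is just the monotone tangent comparison already packaged in \eqref{Eq_2}.
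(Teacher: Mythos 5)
Your proof of the displayed bound is essentially the paper's argument run in contrapositive form: the paper takes \eqref{Eq_2} on $[s,t]$, applies $\arctan$ to both sides and rearranges, using $\omega(t)<0$ to obtain $\arctan\left(\omega(s)/\sqrt{LC}\right)\le \sqrt{LC}\,t$, i.e. $\omega(s)\le\sqrt{LC}\tan(\sqrt{LC}\,t)$; you instead assume $\omega(s_0)\ge\sqrt{LC}\tan(\sqrt{LC}\,t)$ and push \eqref{Eq_2} forward to force $\omega(t)\ge 0$, a contradiction. Same key lemma, same interval, so the analytic core is identical (your version even delivers the strict inequality claimed in the statement, where the paper's conclusion is only $\le$). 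Where you genuinely go beyond the paper is twofold. First, you flag and resolve the tacit hypothesis behind \eqref{Eq_2}, namely that $h\neq 0$ (no collision of $\zeta_\gamma$ and $\zeta_\kappa$) on the interval where the comparison is invoked; your maximal-interval argument ($\omega\ge\tilde\omega\ge 0$ there, hence $|h|$ nondecreasing, hence no collision before $t$) closes a step the paper's proof silently passes over. Second, the paper's proof says nothing at all about the ``does not branch'' conclusion -- it is left as an implicit consequence of the upper bound on $\omega$, via the exponential-separation argument in the style of Proposition \ref{Prop_It}(c)--(d); you instead derive it from the time-reversal symmetry of Lemma \ref{Lem_bwd} together with Proposition \ref{Prop_It}(b) applied to $u^{bt}$, using $u^{bt}_x(0,\gamma)=-u_x(t,\gamma)>0>\Omega(t)$ to place $\gamma\in I_t$ for the reversed solution. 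Both routes are sound; yours has the advantage of quoting an already-proved statement rather than redoing the separation estimate, at the modest cost of verifying that forward uniqueness for $u$ is exactly backward uniqueness for $u^{bt}$ under $s\mapsto t-s$, which you do correctly, including the sign relation $\hat\omega(s)=-\omega(t-s)$.
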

\begin{proof}
By \eqref{Eq_2} $$\omega(t) \ge \sqrt{LC} \tan\left(-\sqrt{LC}(t-s) + \arctan\left(\frac {\omega(s)}{\sqrt{LC}}\right)\right).$$ Thus,
$$\arctan\left(\frac {\omega(t)}{\sqrt{LC}}\right) \ge -\sqrt{LC} (t-s) + \arctan\left(\frac {\omega(s)}{\sqrt{LC}}\right)$$ and
$$ \arctan\left(\frac {\omega(s)}{\sqrt{LC}}\right) \le \arctan\left(\frac {\omega(t)}{\sqrt{LC}}\right) + \sqrt{LC} t \le \sqrt{LC} t,$$
where we used $\omega(t)<0$. Hence,
$$\omega(s) \le \sqrt{LC} \tan(\sqrt{LC}t) \le \sqrt{LC},$$
the last inequality by \eqref{Eq_es111}.
\end{proof}

\begin{proposition}
If $u_x(t,\gamma)<0$ then $\zeta_{\gamma}$ is unique forwards and backwards for almost every $\gamma$. 
\end{proposition}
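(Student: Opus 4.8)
The plan is to treat forward and backward uniqueness separately, in both cases for almost every $\gamma$ with $u_x(t,\gamma)<0$. First I would discard a Lebesgue-null set of bad $\gamma$: since $u(t,\cdot)\in H^1(\mathbb{R})$ is absolutely continuous, at almost every $\gamma$ the difference quotients $(u(t,\kappa)-u(t,\gamma))/(\kappa-\gamma)$ converge to $u_x(t,\gamma)$, so it suffices to establish the claim for those $\gamma$ that are such difference-quotient Lebesgue points of $u_x(t,\cdot)$ and satisfy $u_x(t,\gamma)<0$. Throughout I use that in the setting of Theorem \ref{Th_continuity} we have $t\le t_\epsilon<T\le T_{max}$, so the whole apparatus of Section \ref{Sec_Boundedness} is available on $[0,t]$.

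Forward uniqueness is then immediate from the lemma just proved: for a Lebesgue point $\gamma$ with $u_x(t,\gamma)<0$, Lemma \ref{Lem_lem} asserts precisely that $\zeta_\gamma(\cdot)$ does not branch, which is the definition of unique forwards.

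For backward uniqueness I would invoke the time-reversal symmetry of Lemma \ref{Lem_bwd}. Setting $\tilde u(s,x):=-u(t-s,x)$, which is again a weak solution of (C-H), the substitution $s\mapsto t-s$ turns a backward characteristic of $u$ through $(t,\gamma)$ into a forward characteristic $\xi_\gamma$ of $\tilde u$ emanating from $\gamma$ at time $0$; moreover \emph{unique forwards} for $\tilde u$ corresponds exactly to \emph{unique backwards} for $u$, and vice versa. Now $\tilde u_x(0,\gamma)=-u_x(t,\gamma)>0>\Omega(t)$, and $\gamma$ is a difference-quotient Lebesgue point, so $\gamma\in I_t(\tilde u)$ in the sense of Definition \ref{Def_OI} applied to $\tilde u$. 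By Lemma \ref{Lem5} for $\tilde u$ (legitimate since $t<T_{max}$), the set $I_t(\tilde u)\setminus\bigcup_N I_t^{unique,N}(\tilde u)$ is Lebesgue-null, so for almost every such $\gamma$ there is an $N$ with $\gamma\in I_t^{unique,N}(\tilde u)$. Proposition \ref{Prop_It}d then yields that $\xi_\gamma$ is unique forwards and backwards for $\tilde u$, which, translating back through the symmetry, means that $\zeta_\gamma$ is unique backwards (and forwards) for $u$. Intersecting the two null sets discarded above completes the argument.

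The main obstacle is bookkeeping rather than a new estimate: one must check carefully that the time reversal interchanges the two notions of Definition \ref{Def_UniqueChar} (it is easy to confuse which direction maps to which), and that the two exceptional sets, the non-Lebesgue points of $u_x(t,\cdot)$ and the Lemma \ref{Lem5} defect set for $\tilde u$, are both null in the variable $\gamma$, so that their union remains negligible. I would also remark that the symmetry argument in fact already delivers forward uniqueness as well, so the appeal to Lemma \ref{Lem_lem} is only a convenient shortcut for that half of the statement.
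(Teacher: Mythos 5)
Your proof is correct at the paper's level of rigor, and its forward-uniqueness half is identical to the paper's (both quote Lemma \ref{Lem_lem}); the backward-uniqueness half, however, follows a genuinely different route. The paper argues softly and geometrically: granted forward uniqueness, a failure of backward uniqueness at $\gamma$ produces a whole cone of characteristics terminating at $(t,\gamma)$, this cone sweeps a set of positive area, and forward uniqueness makes the cones attached to distinct $\gamma$ pairwise disjoint, so at most countably many $\gamma$ can be bad --- a conclusion (countability) even stronger than the almost-everywhere statement, obtained without any machinery beyond Lemma \ref{Lem_lem} and elementary measure theory. You instead pass to $\tilde u(s,x)=-u(t-s,x)$ via Lemma \ref{Lem_bwd}, place a.e. admissible $\gamma$ into the set $I_t$ associated with $\tilde u$, and invoke Lemma \ref{Lem5} together with part d) of Proposition \ref{Prop_It}; your bookkeeping of how time reversal swaps the two notions of Definition \ref{Def_UniqueChar} is accurate. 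Note that this is precisely the argument the paper itself runs in the paragraph immediately following the proposition, where it proves the stronger fact that $\zeta_\gamma\in L_t^{unique}$ for a.e.\ $\gamma$ (which is what actually yields $I_5=0$); so your proof in effect merges the proposition with that subsequent step and, as you observe, renders the appeal to Lemma \ref{Lem_lem} redundant. The trade-off: the paper's cone argument is more elementary and self-contained, whereas yours is heavier (it imports the Vitali-covering and energy estimates hidden in Lemma \ref{Lem5}) but quantitative, delivering two-sided bounds on the difference quotients near $\gamma$ rather than bare uniqueness. One caveat you inherit from the paper rather than introduce: Lemma \ref{Lem5} is stated for weak solutions defined on $[0,\infty)$ and its proof evaluates the energy at a time strictly later than $\tau$, while $\tilde u$ lives only on $[0,t]$ and you take $\tau=t$; making this rigorous requires a mild extension of Lemma \ref{Lem5} or an exhaustion argument in $\tau$, and avoiding exactly this point is an advantage of the paper's cone argument for the proposition itself.
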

\begin{proof}
We present a sketch of the proof and refer the reader to \cite[Section 3]{TCGJ} for a rigorous proof in an analogous setting.
First observe that $\zeta_{\gamma} (\cdot)$ are unique forwards by Lemma \ref{Lem_lem}.
Secondly, if $\zeta^1 < \zeta^2$ and $\zeta^1(t)=\gamma = \zeta^2(t)$ then for every $\zeta^1 < \zeta < \zeta^2$ we have $\zeta(t)=\gamma$ by forwards uniqueness of characteristics. Thus for every $\gamma$ there exists a cone of characteristics $\zeta^{\lambda}_{\gamma}(\cdot)$ such that $\zeta^{\lambda}_{\gamma}(t) = \gamma$. If the cone of characteristics contains more than one characteristic, then it sweeps positive area, i.e. $|\bigcup_{\lambda}\bigcup_{s \in [0,t]} \{(s,\zeta_{\gamma^{\lambda}}(s))\}|>0$. Due to uniqueness forwards, cones for different $\gamma$ are disjoint. There can be at most countably many $\gamma$ with cones of positive area. Thus only for countably many $\gamma$ there exists more than one $\zeta_{\gamma}$ such that $\zeta_{\gamma}(t)=\gamma$.
\end{proof}
Consider now the solution $u^{bt}$ starting from $t$ backwards, see Lemma \ref{Lem_bwd}. Then $u^{bt}_x(0,\gamma)>0$ and $\omega^{bt}$ corresponding to the (nonunique) backward characteristic $\gamma^{bt}(\cdot)$, emanating from $\gamma$, is bounded from below by \eqref{Eq_2}. This implies that $\gamma^{bt}$ does not collide with any other backward characteristic, as in Figure \ref{Fig2} left. By Lemma \ref{Lem5} applied to the solution $u^{bt}$
 we obtain that for almost every such $\gamma$ the $\omega^{bt}$ corresponding to $\gamma^{bt}$ is bounded from \emph{above} along the characteristic for every $s \in [0,t]$ for almost every $\gamma$. This corresponds in the original solution to the fact that $\omega(s)$ is bounded from \emph{below} for almost every $\gamma$. 

We conclude that for almost every $\gamma$ the $\omega$ corresponding to $\zeta_{\gamma}$ is bounded from both above and below (for $\eta$ sufficiently close to $\zeta$). Hence, $\zeta_{\gamma} \in L_t^{unique}$. Consequently, $I_5=0$.

\noindent{\bf Term ${\bf I_4}$}

If $\zeta \in L_t^{unique} \backslash D_t^{unique,N}$ then there exists $\tau \in [0,t]$ such that $v(\tau)>N$ or $v(\tau)< -N$.
In the former case passing to the limit $\eta \to \zeta$ in \eqref{Eq_2} we obtain
\begin{equation}
\label{Eq_VTtau}
v(t) \ge \sqrt{LC} \tan \left( -\sqrt{LC} (t-\tau) + \arctan\left(\frac {v(\tau)}{\sqrt{LC}}\right)\right) > 0
\end{equation}
where the last inequality follows by assumption \eqref{Eq_NLC}. Consequently, such $\zeta$ gives no contribution to $I_4$.

In the latter case  
there exists $\tau \in [0,t]$ such that  $v(\tau)<-N$. Denote this set $Z$, i.e. $Z:= \{\zeta \in L_t^{unique} \cap [\alpha,\beta]: \exists_{\tau \in [0,t]} v(\tau) < -N \}$.
We will show that there exists $V_1>0$ such that $v(\sigma) < -V_1$ for all $\sigma \in [0,T]$.

To this end consider first $\sigma \in [0,\tau]$. Then, as in \eqref{Eq_VTtau}
\begin{eqnarray*}
v(\tau) &\ge& \sqrt{LC} \tan \left( -\sqrt{LC} (\tau-\sigma) + \arctan\left(\frac {v(\sigma)}{\sqrt{LC}}\right)\right) 
\end{eqnarray*}
and hence, extracting $v(\sigma)$ and using $v(\tau) \le -N$,
\begin{eqnarray*}
v(\sigma) &\le& \sqrt{LC} \tan \left( \sqrt{LC} (\tau- \sigma) + \arctan\left(\frac {-N}{\sqrt{LC}}\right)\right).
\end{eqnarray*}
By choice of $T$, see \eqref{Eq_es111} and $N$, see \eqref{Eq_NLC} we obtain
$$v(\sigma) \le -\sqrt{LC}.$$
On the other hand, for $\sigma \in [\tau,t]$ we obtain by equation \eqref{Eq_dotv}
\begin{equation}
\dot{v} = u^2 - \frac 1 2 v^2 - P \le \sup(u^2) + \sup(P).
\end{equation}
Hence, $v(\sigma) = v(\tau) + \int_{\sigma}^{\tau}\dot{v} \le -N + T(\sup(u^2) + \sup(P))$ and by \eqref{Eq_es12} we obtain $v(\sigma) \le - \sqrt{LC}$. 

To estimate $I_4$ set $V_1 := \sqrt{LC}$ and use Proposition \ref{Prop_doublebound} to obtain 

\begin{equation*}
e^{-\frac {2t}{V_1}((\sup u)^2 + \sup(P))} \le \frac {v^2(t) M_t'(\zeta)}{v^2(0) M_0'(\zeta)} \le e^{\frac {2t}{V_1}((\sup u)^2 + \sup(P))} 
\end{equation*}
This, in view of condition \eqref{Eq_es11}, gives
\begin{equation*}
e^{-\epsilon} \le \frac {v^2(t) M_t'(\zeta)}{v^2(0) M_0'(\zeta)}  \le e^{\epsilon},
\end{equation*}
which leads, for fixed $K>0$, to the estimate (we use the change of variables formula, see beginning of Section \ref{Sec_ContPrel}) 
\begin{eqnarray*}
&&\int_{M_t([\alpha,\beta] \cap L_t^{unique,K} \backslash D_t^{unique,N})} (u_x^-(t,z))^2 dz\\
&= &\int_{M_t(Z \cap L_t^{unique,K} \backslash D_t^{unique,N})} (u_x(t,z))^2 dz\\
&=& \int_{Z \cap L_t^{unique,K}\backslash D_t^{unique,N}} u_x^2(t,M_t(\zeta))dM_t(\zeta)\\
&=& \int_{Z \cap L_t^{unique,K}\backslash D_t^{unique,N}} u_x^2(t,M_t(\zeta))M_t'(\zeta) d\zeta \\
&\le& e^\epsilon \int_{Z \cap L_t^{unique,K}\backslash D_t^{unique,N}} u_x^2(0,\zeta) d\zeta.
\end{eqnarray*}

Passing to the limit $K \to \infty$ we obtain 
\begin{eqnarray*}
\int_{M_t([\alpha,\beta] \cap L_t^{unique} \backslash D_t^{unique,N})} (u_x^-(t,z))^2 dz &\le& e^{\epsilon} \int_{Z \cap L_t^{unique} \backslash D_t^{unique,N}} u_x^2(0,\zeta)d\zeta \\
&\le& e^{\epsilon} \int_{[\alpha,\beta] \backslash D_t^{unique,N}} u_x^2(0,\zeta)d\zeta \\
&\le& e^{\epsilon} \int_{[\alpha,\beta] \backslash I_T } u_x^2(0,\zeta)d\zeta +  e^{\epsilon} \int_{I_T \backslash D_t^{unique,N} } u_x^2(0,\zeta)d\zeta\\
&\le& 2\epsilon e^{\epsilon},
\end{eqnarray*}
where we used \eqref{Eq_es1} and \eqref{Eq_es2}.

Combining all the estimates, we obtain
\begin{eqnarray*}
&&\left| \int_{[\alpha(t),\beta(t)]} (u_x^-(t,z))^2 dz   - \int_{[\alpha,\beta]} (u_x^-(0,\zeta))^2 d\zeta \right| \\
&\le&{\bf I_1} + {\bf I_2} + {\bf I_3} + {\bf I_4} + {\bf I_5}\\
&\le& 2 \epsilon \int_{\mathbb{R}} |u_x^2(t,z)| dz + \epsilon|\beta-\alpha| + 2 \epsilon(\beta(t)-\alpha(t)) + \epsilon + \epsilon + 2\epsilon e^{\epsilon} + 0,
\end{eqnarray*}
which converges to $0$ as $\epsilon \to 0$ and $t \in [0,t_{\epsilon}]$. 

This proves Theorem \ref{Th_continuity} for $\alpha(t), \beta(t)$ -- leftmost characteristics. To obtain a proof for arbitrary characteristics, it is enough to show that for every $\zeta \in \mathbb{R}$ 
$$ \lim_{t \to 0} \int_{[\zeta^l(t),\zeta^r(t)]} (u_x^-(t,x))^2 dx  = 0,$$ where $\zeta^l$ and $\zeta^r$ are, respectively, the leftmost and rightmost characteristic. This, however, can be obtained by  passing $\beta \to \alpha^+$ in 
$$0= I_ 5 = \int_{[\alpha(t),\beta(t)] \backslash M_t(L_t^{unique})} (u_x^- (t,z))^2 dz.$$

\begin{remark}
\rm
One could ask whether it is possible to prove right continuity of $$\int_{[\alpha(t),\beta(t)]}u_x^+(t,z)^2dz$$ following the lines of proof of Theorem \ref{Th_continuity}. The answer is {\emph no} since even though the estimates for $I_1,I_2,I_3,I_4$ carry over, the term $I_5$ heavily depends on the fact that \emph{negative} part of derivative is considered. And indeed, $\int_{[\alpha(t),\beta(t)]}(u_x^+(t,z))^2dz$ is \emph{not} right continuous (see the peakon-antipeakon interaction in Introduction). Nevertheless, it is possible to prove existence of the limit $\lim_{t \to 0^+} \int_{[\alpha(t),\beta(t)] }u_x^+(t,x)^2dx$, see Section \ref{Sec_Th9}. 
\end{remark}

\section{Proof of Theorem \ref{Th_limit}}
\label{Sec_Th9}
\begin{lemma}
\label{Lem_limit}
Let $f: [0,\infty) \to (-\infty,0]$ be continuous, nonincreasing and such that $f(0)=0$. Let $g: [0,\infty) \to \mathbb{R}$ be bounded and satisfy $$g(t_2) - g(t_1) \ge f(t_2 - t_1)$$ for all $t_1,t_2$ such that  $0 \le t_1 < t_2$. Then there exists the limit
\begin{equation}
\label{Eq_glim}
\lim_{t \to 0^+} g(t).
\end{equation}
If, additionally, $f$ is Lipschitz continuous then $g$ has locally bounded variation.
\end{lemma}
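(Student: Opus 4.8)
The plan is to reinterpret the hypothesis as a one-directional control on the jumps of $g$. Rewriting the assumption for $t_1 < t_2$ as $g(t_1) \le g(t_2) - f(t_2-t_1)$ and recalling $f \le 0$, we see that $g$ at an \emph{earlier} time can exceed $g$ at a slightly later time by at most $-f(t_2-t_1) = |f(t_2-t_1)|$, a quantity that tends to $0$ as $t_2-t_1 \to 0$ by continuity of $f$ at $0$ together with $f(0)=0$. In other words, large downward movements of $g$ as $t$ increases are forbidden on short time-scales, while upward jumps are unrestricted. I will use this asymmetry to force the upper and lower limits at $0^+$ to coincide.

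For the existence of the limit, set $M := \limsup_{t\to 0^+} g(t)$ and $m := \liminf_{t\to 0^+} g(t)$, both finite since $g$ is bounded. Suppose toward a contradiction that $M > m$ and put $\epsilon := (M-m)/4 > 0$. By continuity of $f$ and $f(0)=0$, choose $r>0$ with $|f(s)| < \epsilon$ for $0 \le s < r$. Pick a sequence $q_n \to 0^+$ with $g(q_n) \to M$. For any fixed $t \in (0,r)$ and all $n$ large enough that $q_n < t$ and $g(q_n) > M - \epsilon$, the hypothesis gives
\[
g(t) \ge g(q_n) + f(t - q_n) \ge g(q_n) - \epsilon > M - 2\epsilon,
\]
since $0 \le t - q_n < r$. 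As $t \in (0,r)$ was arbitrary, $g > M - 2\epsilon$ on all of $(0,r)$, whence $m \ge M - 2\epsilon = (M+m)/2$, forcing $m \ge M$, a contradiction. Thus $M = m$ and the limit \eqref{Eq_glim} exists. The crucial point — and the step I expect to require the most care — is the ordering in this covering argument: one must place the near-maximal points $q_n$ \emph{before} the arbitrary point $t$ and then let $q_n \to 0$ so that the intervals $(q_n, r)$ exhaust $(0,r)$; exploiting the control in the wrong time-direction yields no contradiction.

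For the bounded-variation claim under the extra hypothesis that $f$ is Lipschitz, let $K$ be a Lipschitz constant of $f$; since $f(0)=0$ this gives $f(s) \ge -Ks$ for $s \ge 0$. I would then define $\tilde g(t) := g(t) + Kt$, so that for $t_1 < t_2$,
\[
\tilde g(t_2) - \tilde g(t_1) = \big(g(t_2)-g(t_1)\big) + K(t_2-t_1) \ge f(t_2-t_1) + K(t_2-t_1) \ge 0,
\]
which shows $\tilde g$ is nondecreasing and hence of locally bounded variation (a bounded monotone function has finite variation on each $[0,T]$). Writing $g(t) = \tilde g(t) - Kt$ as the difference of two $BV_{loc}$ functions yields $g \in BV_{loc}$, completing the proof. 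Note that this monotone-correction argument also re-proves the existence of the one-sided limit in the Lipschitz case, but the separate argument above is needed for merely continuous $f$.
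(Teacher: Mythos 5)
Your proposal is correct, and its second half (the $BV_{loc}$ claim) coincides with the paper's own argument: both add the linear correction $Kt$ (the paper calls it $\kappa t$) to produce a nondecreasing function and conclude. The first half, however, takes a genuinely different route to the existence of the limit. The paper argues by contradiction from the failure of the Cauchy criterion: it extracts an interlaced sequence $h > t_1 > s_1 > t_2 > s_2 > \dots$ of $\epsilon$-oscillations, uses the hypothesis to bound each downward move from below by $-\epsilon/2$, and telescopes to get $g(t_1) - g(s_n) \ge n\epsilon/2 \to \infty$, contradicting boundedness of $g$. You instead compare $M = \limsup_{t\to 0^+} g$ and $m = \liminf_{t\to 0^+} g$ directly: placing near-maximal points $q_n$ \emph{before} an arbitrary $t$ in a small window and invoking the one-sided control $g(t) \ge g(q_n) + f(t-q_n)$ forces $g > M - 2\epsilon$ on the whole window, hence $m \ge M$. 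Both proofs exploit exactly the same asymmetry (downward motion is forbidden on short time-scales, upward jumps are free), but your version avoids the recursive construction of the oscillating sequence and is arguably more economical; the paper's telescoping argument has the mild advantage of exhibiting quantitatively how forbidden oscillations would accumulate to unboundedness, which is closer in spirit to a variation estimate. Your remark that the monotone-correction trick independently re-proves the limit in the Lipschitz case is also accurate, and correctly identifies why the separate argument is needed only for merely continuous $f$.
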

\begin{proof}
Suppose the limit \eqref{Eq_glim} does not exist. Then there exists $\epsilon > 0$ such that  for every $\delta > 0$ there exist $0<t_1<t_2<\delta$ such that $|g(t_2) - g(t_1)| \ge \epsilon$. Let $h$ be so small that $f(h) \ge  -\epsilon/2$. Then for every $\delta < h$ there exist $0<t_1<t_2<\delta$ such that $g(t_2) - g(t_1) \ge \epsilon$, since $g(t_2)-g(t_1)\le -\epsilon$ is impossible due to the estimate $g(t_2) - g(t_1) \ge f(t_2-t_1) \ge f(\delta) \ge f(h) \ge -\epsilon/2$. Consequently, one can find a sequence $$h>t_1 > s_1 > t_2 > s_2 > \dots > t_n > s_n > \dots > 0$$ such that for $i=1,2, \dots$
\begin{eqnarray*}
g(t_i) - g(s_i) &\ge& \epsilon, \\
g(s_i) - g(t_{i+1}) &\ge& -\epsilon/2.
\end{eqnarray*}
This gives, for every $n>0$, the estimate
$$
g(t_1) - g(s_n) = \sum_{i=1}^{n} (g(t_i) - g(s_i))  +  \sum_{i=1}^{n-1} (g(s_i) - g(t_{i+1})) \ge n \epsilon + (n-1)(-\epsilon/2) \ge n \epsilon/2.
$$
Sending $n \to \infty$ we obtain a contradiction with boundedness of $g$ and thus existence of the limit \eqref{Eq_glim}.
If, additionally, $f$ is Lipschitz continuous, then there exists a constant $\kappa \ge 0$ such that $f(t_2 - t_1) \ge -\kappa (t_2 - t_1)$. Then function $\tilde{g}(t):= g(t)+ \kappa t$ is increasing and thus of locally bounded variation. Consequently, $g$ has locally bounded variation.
\end{proof}

To prove Theorem \ref{Th_limit} it is enough, by Lemma \ref{Lem_limit}, to show that for $t_2>t_1>0$
\begin{equation*}
\int_{[\alpha(t_2),\beta(t_2)]} (u_x^+(t_2,x))^2 dx \ge \int_{[\alpha(t_1),\beta(t_1)]} (u_x^+(t_1,x))^2 dx + f(t_2 - t_1) 
\end{equation*}
for some function $f$ satisfying assumptions of Lemma \ref{Lem_limit}.
To this end, fix $T>0$, $V_4 := 4 (\sup(u)^2 + \sup(P))$ and 
find $\Delta t_{\epsilon} < \max(1, \pi /4)$ so small that 
\begin{eqnarray}
\sqrt{LC} \tan (-\sqrt{LC} \Delta t_{\epsilon} + \arctan(V_4/\sqrt{LC})) &\ge& V_4 /2, \label{Eq_kolko1}
\end{eqnarray}
Take now any $t_1,t_2$ such that $0< t_1 < t_2 \le \Delta t_{\epsilon}$. Before we proceed with the estimate, we
\begin{itemize}
\item Define $M_{t_1t_2}(\gamma) := \gamma^l (t_2)$ where $\gamma^l$ is the leftmost characteristic satisfying $\gamma(t_1)=\gamma$. Function $M_{t_1t_2}$ is an analog, for initial time $t_1$, of function $M_t$ defined in Definition \ref{Def_Mt}.
\item Define $L_{t_1 t_2}^{unique} := [L_{t_2 - t_1}^{unique}$ for shifted solution $u(\cdot -t_1,\cdot)$], where $L_t^{unique}$ is defined in Definition \ref{Def_Lt}. Thus $L_{t_1,t_2}^{unique}$ is the $L_{t_2 - t_1}^{unique}$ with initial condition at time $t_1$.
\item Observe that due to uniqueness we have $$M_{t_1t_2} (L_{t_1t_2}^{unique} \cap [\alpha(t_1),\beta(t_1)]) \subset [\alpha(t_2),\beta(t_2)]$$ even though not necessarily $M_{t_1t_2}([\alpha(t_1),\beta(t_1)]) \subset [\alpha(t_2),\beta(t_2)]$.
\item Observe that by \eqref{Eq_2} combined with \eqref{Eq_kolko1}, if $u_x(t_1,\gamma) \ge V_4$ then $u_x(t_2,M_{t_1t_2}(\gamma)) \ge V_4/2$ and, consequently, by \eqref{Eq_BigN}
\begin{equation}
\frac {v^2(t_2) M_{t_1 t_2}'(\gamma)}{v^2(t_1)}  \ge e^{-\frac {2(t_2 - t_1)}{V_4/ 2}((\sup u)^2 + \sup(P))} = e^{- (t_2 - t_1)} \ge 1 - (t_2 - t_1). \label{Eq_kwadrat}
\end{equation}
\item Observe that if $V_4 \ge v(t_1) \ge 0$ then  for every $t \in [t_1,t_2]$ we have, by \eqref{Eq_2},
\begin{eqnarray*}
v(t) &\ge& \sqrt{LC} \tan(-\sqrt{LC} (t_2 - t_1)) \ge - \sqrt{LC}, \\
v(t) &\le& V_4 +  (\Delta t_\epsilon) \sup(u^2) \le V_4 + \sup(u^2).
\end{eqnarray*}
and hence $$\sup_{[t_1,t_2]} |v(t)| \le \tilde{V} := \max(\sqrt{LC}, V_4 + \sup(u^2)).$$
\end{itemize}

Using these observations and the change of variables described at the beginning of Section \ref{Sec_ContPrel} we obtain, for fixed $V_3 > V_4$,

\begin{eqnarray*}
\int_{[\alpha(t_2),\beta(t_2)]} (u_x^+ (t_2,z))^2 dz &\ge& \int_{M_{t_1 t_2} (L_{t_1 t_2}^{unique} \cap \{V_3 \ge u_x(t_1,\cdot) \ge V_4\} \cap [\alpha(t_1),\beta(t_1)])} (u_x (t_2,z))^2  dz\\
&& + \int_{M_{t_1 t_2} (L_{t_1 t_2}^{unique} \cap \{V_4 > u_x(t_1,\cdot) \ge 0\} \cap [\alpha(t_1),\beta(t_1)])} (u_x (t_2,z))^2  dz\\  &=& T_1 + T_2.
\end{eqnarray*}

\begin{eqnarray*}
T_1 &=& \int_{L_{t_1 t_2}^{unique} \cap \{V_3 \ge u_x(t_1,\cdot) \ge V_4\} \cap [\alpha(t_1),\beta(t_1)]} u_x (t_2,M_{t_1t_2}(\gamma))^2  dM_{t_1t_2}(\gamma)\\
&=& \int_{L_{t_1 t_2}^{unique} \cap \{V_3 \ge u_x(t_1,\cdot) \ge V_4\} \cap [\alpha(t_1),\beta(t_1)]} u_x (t_2,M_{t_1t_2}(\gamma))^2  M_{t_1t_2}'(\gamma) d\gamma \\
&\ge& \left(1- (t_2 - t_1)\right) \int_{L_{t_1 t_2}^{unique} \cap \{V_3 \ge u_x(t_1,\cdot) \ge V_4\} \cap [\alpha(t_1),\beta(t_1)]} u_x (t_1,\gamma)^2   d\gamma,\\
&&\mbox{the last inequality by  \eqref{Eq_kwadrat}},\\
T_2 &=& \int_{L_{t_1 t_2}^{unique} \cap \{V_4 > u_x(t_1,\cdot) \ge 0\} \cap [\alpha(t_1),\beta(t_1)]} u_x (t_2,M_{t_1t_2}(\gamma))^2  dM_{t_1t_2}(\gamma)\\
&=& \int_{L_{t_1 t_2}^{unique} \cap \{V_4 \ge u_x(t_1,\cdot) \ge 0\} \cap [\alpha(t_1),\beta(t_1)]} u_x (t_2,M_{t_1t_2}(\gamma))^2  M_{t_1t_2}'(\gamma) d\gamma \\
&\ge& \int_{L_{t_1 t_2}^{unique} \cap \{V_4 \ge u_x(t_1,\cdot) \ge 0\} \cap [\alpha(t_1),\beta(t_1)]} u_x (t_1,\gamma)^2 d\gamma   \\
&& - (t_2 - t_1) \int_{L_{t_1 t_2}^{unique} \cap \{V_4 \ge u_x(t_1,\cdot) \ge 0\} \cap [\alpha(t_1),\beta(t_1)]}  \sup_{t \in [t_1,t_2]} \left| \frac {d}{dt} (v^2 M_{t_1t} ') \right| d\gamma \\
&\ge& \int_{L_{t_1 t_2}^{unique} \cap \{V_4 \ge u_x(t_1,\cdot) \ge 0\} \cap [\alpha(t_1),\beta(t_1)]} u_x (t_1,\gamma)^2 d\gamma   \\
&& - (t_2 - t_1) \int_{L_{t_1 t_2}^{unique} \cap \{V_4 \ge u_x(t_1,\cdot) \ge 0\} \cap [\alpha(t_1),\beta(t_1)]} \sup_{t \in [t_1,t_2]} |2v(u^2 - P) M_{t_1 t}'|   d\gamma  \\
&\ge& \int_{L_{t_1 t_2}^{unique} \cap \{V_4 \ge u_x(t_1,\cdot) \ge 0\} \cap [\alpha(t_1),\beta(t_1)]} u_x (t_1,\gamma)^2 d\gamma   \\
&& - 2 \tilde{V} (\sup(u^2) + \sup(P)) e^{\Delta t_{\epsilon} \tilde{V}} (t_2 - t_1)  \int_{L_{t_1 t_2}^{unique} \cap \{V_4 \ge u_x(t_1,\cdot) \ge 0\} \cap [\alpha(t_1),\beta(t_1)]}  d\gamma  
\end{eqnarray*}

Thus, 
\begin{eqnarray*}
\int_{[\alpha(t_2),\beta(t_2)]} (u_x^+ (t_2,z))^2 dz &\ge& T_1 + T_2\\
 &\ge& \int_{L_{t_1 t_2}^{unique} \cap \{V_3 \ge u_x(t_1,\cdot) \ge 0\} \cap [\alpha(t_1),\beta(t_1)]} u_x (t_1,\gamma)^2   d\gamma \\
   &&- (t_2 - t_1) (2 \tilde{V} (\sup(u^2) + \sup(P)) e^{\Delta t_{\epsilon} \tilde{V}} ) (\beta(t_1) - \alpha(t_1))  \\
  && - (t_2 - t_1) \int_{[\alpha(t_1),\beta(t_1)]} u_x (t_1,\gamma)^2   d\gamma.
\end{eqnarray*}

Passing to the limit $V_3 \to \infty$ we end up with:
\begin{eqnarray}
\label{Eq_minus2eps}
\int_{[\alpha(t_2),\beta(t_2)]} u_x^+ (t_2,z)^2 dz &\ge& \int_{[\alpha(t_1),\beta(t_1)] }  u_x^+(t_1,\gamma)^2 d\gamma \\
   &&- (t_2 - t_1) (2 \tilde{V} (\sup(u^2) + \sup(P)) e^{\Delta t_{\epsilon} \tilde{V}} ) (\beta(t_1) - \alpha(t_1))   \nonumber\\
  && - (t_2 - t_1) \int_{[\alpha(t_1),\beta(t_1)]} u_x (t_1,\gamma)^2   d\gamma,\nonumber
\end{eqnarray}
where we used the identity $\int_{L_{t_1t_2}^{unique} \cap [\alpha(t_1),\beta(t_1)]}  u_x^+(t_1,\gamma)^2 d\gamma = \int_{[\alpha(t_1),\beta(t_1)]}  u_x^+(t_1,\gamma)^2 d\gamma$, which follows by the same argumentation as the proof of $I_5 = 0$ in Section \ref{Sec_ProofCont}.

Using Lemma \ref{Lem_limit} we conclude the proof of Theorem \ref{Th_limit}. 

\section{Proof of Theorem \ref{Th_general}}
   \label{Sec_ProofThGeneral}
Properties \eqref{EqTh81} and \eqref{EqTh83} as well as $BV$ regularity of  $t \mapsto \int_{[\alpha(t),\beta(t)]} (u_x^+(t,x))^2dx$ are simple consequences of Theorems \ref{Th_continuity} and  \ref{Th_limit}, respectively, applied to function $\tilde{u}(t,x) := u(t-t_0,x)$. To prove \eqref{EqTh82} and \eqref{EqTh84} as well as $BV$ regularity of $t \mapsto \int_{[\alpha(t),\beta(t)]} (u_x^-(t,x))^2dx$, it is enough to apply Theorems \ref{Th_continuity} and \ref{Th_limit} to function $\tilde{u} (t,x) := -u(t_0 - t, x)$, which is a solution of the Camassa-Holm equation by Lemma \ref{Lem_bwd}. Indeed, to prove \eqref{EqTh82}, we denote $\tilde{\alpha}(t) := \alpha(t_0-t), \tilde{\beta}(t) := \beta(t_0-t)$ and obtain, by Theorem \ref{Th_limit}:
\begin{eqnarray*}
\lim_{t \to 0^+} \int_{[\tilde{\alpha}(t),\tilde{\beta}(t)]} (\tilde{u}_x^-(t,x))^2 dx = \int_{[\alpha(t_0),\beta(t_0)]} (\tilde{u}_x^-(0,x))^2 dx,
\end{eqnarray*}
which is equivalent to
\begin{eqnarray*}
\lim_{t \to 0^+} \int_{[\tilde{\alpha}(t),\tilde{\beta}(t)]} ((-{u}_x)^-(t_0-t,x))^2 dx = \int_{[\alpha(t_0),\beta(t_0)]} ((-{u}_x)^-(t_0,x))^2 dx
\end{eqnarray*}
and 
\begin{eqnarray*}
\lim_{t \to 0^+} \int_{[\tilde{\alpha}(t),\tilde{\beta}(t)]} ({u}_x^+(t_0-t,x))^2 dx = \int_{[\alpha(t_0),\beta(t_0)]} ({u}_x^+(t_0,x))^2 dx.
\end{eqnarray*}
Changing variables $t \to t_0 - t$ we conclude. Similarly, to prove $BV$ regularity of $t \mapsto \int_{[\alpha(t),\beta(t)]} (u_x^-(t,x))^2 dx$ we observe that by Theorem \ref{Th_limit}
function $$t \mapsto \int_{[\tilde{\alpha}(t),\tilde{\beta}(t)]} (\tilde{u}_x^+(t,x))^2 dx = \int_{[\tilde{\alpha}(t),\tilde{\beta}(t)]} (u_x^-(t_0-t,x))^2 dx$$ is locally of bounded variation.
Change of the time variable $t \to t_0 - t$ concludes.

\section{Proof of Theorem \ref{Th_cadlag}}
\label{Sec_Th10}
Fix a smooth compactly supported function $\phi: \mathbb{R} \to [0,\infty)$. We need to prove that, for fixed time $t_0$,
\begin{eqnarray*}
\lim_{t \to t_0^+} \int_{\mathbb{R}} \phi(x)(u_x^-(t,x))^2 dx &=& \int_{\mathbb{R}} \phi(x)(u_x^-(t_0,x))^2 dx,\\
\lim_{t \to t_0^-} \int_{\mathbb{R}} \phi(x) (u_x^+(t,x))^2 dx &=& \int_{\mathbb{R}} \phi(x)(u_x^+(t_0,x))^2 dx,
\end{eqnarray*}
existence and linearity (with respect to $\phi$) of limits
\begin{eqnarray*} 
\lim_{t \to t_0^+} \int_{\mathbb{R}} \phi(x)(u_x^+(t,x))^2 dx,\\
\lim_{t \to t_0^-} \int_{\mathbb{R}} \phi(x)(u_x^-(t,x))^2 dx,
\end{eqnarray*}
as well as that 
\begin{eqnarray*}
t &\mapsto& \int_{\mathbb{R}} \phi(x)(u_x^-(t,x))^2 dx, \\
t &\mapsto& \int_{\mathbb{R}} \phi(x)(u_x^+(t,x))^2 dx
\end{eqnarray*}
 have locally bounded variation on $[0,\infty)$.

As a matter of fact, however, it is enough to prove the $BV_{loc}$ property of $$t \mapsto \int_{\mathbb{R}} \phi(x)(u_x^+(t,x))^2 dx,$$ which guarantees existence of both left and right limits and to determine $$\lim_{t \to t_0^+} \int_{\mathbb{R}} \phi(x)(u_x^-(t,x))^2 dx.$$ The other properties follow then by duality.

Let us first consider the limit ${\lim_{t \to t_0^+} \int_{\mathbb{R}} \phi(x)(u_x^-(t,x))^2 dx}$. Fix $\epsilon > 0$ and approximate $\phi$ by
\begin{equation*}
\phi^{\epsilon}:= \sum_{i=0}^{I-1} c_i\bold{1}_{[a_i,a_{i+1})}
\end{equation*}
in such a way that $a_0<a_1 < \dots <a_I$, $c_i > 0$, $c_0<\epsilon$, $c_{I-1}<\epsilon$, $|c_i-c_{i+1}|<\epsilon$ and $$\sup_{x \in \mathbb{R}} |\phi^{\epsilon}(x) - \phi(x)| \le \epsilon.$$
Define for $t>t_0$
\begin{equation*}
\phi^{\epsilon} (t,x) := \sum_{i=0}^{I-1} c_i\bold{1}_{[a_i(t),a_{i+1}(t))} (x),
\end{equation*}
where $a_i(t)$ are the leftmost characteristics emanating from $a_i$ at time $t_0$. By Theorem \ref{Th_general} as well as finite speed of propagation of characteristics, we can find $\delta>0$ such that if $t_0 \le t \le t_0+\delta$  then 
\begin{itemize}
\item $$\sup_{x \in \mathbb{R}} |\phi^{\epsilon}(t,x) - \phi^{\epsilon}(x)| < \epsilon,$$
\item $$\left| \int_{[a_i(t),a_{i+1}(t)]} u_x^-(t,x)^2 dx - \int_{[a_i(t_0),a_{i+1}(t_0)]} u_x^-(t_0,x)^2 dx \right| < {\epsilon}\slash\sum_{j=0}^{I-1}c_j,$$
\end{itemize}
for $i=0,1,\dots,I-1$.
Using these estimates we calculate
\begin{eqnarray*}
\left| \int_{\mathbb{R}} \phi(x) (u_x^-(t,x))^2 dx -  \int_{\mathbb{R}} \phi(x) (u_x^-(t_0,x))^2 dx \right| \le \\
\int_{\mathbb{R}} |\phi(x) - \phi^\epsilon(t,x)| (u_x^-(t,x))^2 dx + \left| \int_{\mathbb{R}} \phi^{\epsilon}(t,x) u_x^-(t,x)^2 dx - \int_{\mathbb{R}} \phi^\epsilon(x)u_x^-(t_0,x)^2 dx  \right|\\ + \int_{\mathbb{R}} |\phi^\epsilon(x) - \phi(x)| u_x^-(t_0,x)^2 dx = I_1 + I_2 + I_3.
\end{eqnarray*}
Now,
$$I_1 = \int_{\mathbb{R}} |\phi(x) - \phi^\epsilon(t,x)| (u_x^-(t,x))^2 dx \le 2\epsilon \sup_{t} \int_{\mathbb{R}} u_x^2(t,x)dx, $$
\begin{eqnarray*}
I_2 &=&  \left| \sum_{i=0}^{I-1} c_i \left(\int_{[a_i(t),a_{i+1}(t))} u_x^-(t,x)^2 dx - \int_{[a_i(t_0),a_{i+1}(t_0))} u_x^-(t_0,x)^2 dx \right) \right|\\
&\le& \sum_{i=0}^{I-1} c_i \left|\int_{[a_i(t),a_{i+1}(t))} u_x^-(t,x)^2 dx - \int_{[a_i(t_0),a_{i+1}(t_0))} u_x^-(t_0,x)^2 dx \right| \\
&=& \sum_{i=0}^{I-1} c_i \left|\int_{[a_i(t),a_{i+1}(t)]} u_x^-(t,x)^2 dx - \int_{[a_i(t_0),a_{i+1}(t_0)]} u_x^-(t_0,x)^2 dx \right| \le \epsilon
\end{eqnarray*}
and
$$I_3 \le \epsilon \sup_{t} \int_{\mathbb{R}} u_x^2(t,x)dx. $$
We conclude that 
\begin{equation*}
\lim_{t \to t_0^+} \left| \int_{\mathbb{R}} \phi(x) (u_x^-(t,x))^2 dx -  \int_{\mathbb{R}} \phi(x) (u_x^-(0,x))^2 dx \right|  = 0.
\end{equation*}

\noindent Next, to prove that  $$t \mapsto \int_{\mathbb{R}} \phi(x)(u_x^-(t,x))^2 dx$$  has locally bounded variation we recall that, by \eqref{Eq_minus2eps}, 
\begin{eqnarray}
\int_{[\alpha(t_2),\beta(t_2)]} u_x^+ (t_2,z)^2 dz &\ge& \int_{[\alpha(t_1),\beta(t_1)] }  u_x^+(t_1,\gamma)^2 d\gamma \label{Eq46} \\
   &&- K (t_2 - t_1) (\beta(t_1) - \alpha(t_1))   \nonumber\\
  && - (t_2 - t_1) \int_{[\alpha(t_1),\beta(t_1)]} u_x (t_1,\gamma)^2   d\gamma.\nonumber
\end{eqnarray}
Take any bounded compactly supported test function $\phi: \mathbb{R} \to [0,\infty)$.
Fix $\epsilon > 0$ and approximate $\phi$, as before, by
\begin{equation*}
\phi^{\epsilon}:= \sum_{i=0}^{I-1} c_i\bold{1}_{[a_i,a_{i+1})}
\end{equation*}
in such a way that $a_0<a_1 < \dots <a_I$, $c_i > 0$ and $$\sup_{x \in \mathbb{R}} |\phi^{\epsilon}(x) - \phi(x)| \le \epsilon.$$
Let $a_i(t)$ be the leftmost characteristics emanating from $a_i$. 
Then, by \eqref{Eq46}
\begin{eqnarray*}
\sum_{i=0}^{I-1} c_i \int_{[a_i(t_2),a_{i+1}(t_2)]} u_x^+(t_2,z)^2 dz &\ge& \sum_{i=0}^{I-1} c_i \int_{[a_i(t_1),a_{i+1}(t_1)]} u_x^+ (t_1,\gamma)^2 d\gamma  \\
 && -(t_2 - t_1) \left(K(a_I(t_1) - a_0(t_1)) + \int_{[a_0(t_1),a_I(t_1)]} u_x(t_1,\gamma)^2 d\gamma \right) \\
 &\ge& \int_{\mathbb{R}} \phi u_x^+(t_1,\gamma)^2 d\gamma  - \epsilon \int_{\mathbb{R}} u_x^2 (t_1,\gamma) d \gamma  \\
 && -(t_2 - t_1) \left(K(a_I(t_1) - a_0(t_1)) + \int_{[a_0(t_1),a_I(t_1)]} u_x(t_1,\gamma)^2 d\gamma \right).\\
\end{eqnarray*}
On the other hand,
\begin{eqnarray*}
\sum_{i=0}^{I-1} c_i \int_{[a_i(t_2),a_{i+1}(t_2)]} u_x^+(t_2,z)^2 dz &\le& \int_{\mathbb{R}} \phi u_x^+ (t_2,z)^2 dz  +  \int_{\mathbb{R}} \left|\phi - \sum_{i=0}^{I-1} c_i \bold{1}_{[a_i(t_2), a_{i+1}(t_2))} (z)  \right| u_x^2(t_2,z)dz.
\end{eqnarray*}
Now, denoting $M_{t_1t_2} \phi := \phi(M_{t_1 t_2} ^{-1})$ the pushforward of the function $\phi$ with $M_{t_1t_2}$, where $M_{t_1 t_2} ^{-1}$ is the right-continuous inverse, we obtain
\begin{eqnarray*}
\left|\phi - \sum_{i=0}^{I-1} c_i \bold{1}_{[a_i(t_2), a_{i+1}(t_2))}  \right| &\le& |\phi - M_{t_1t_2} \phi| + \left|M_{t_1t_2} \phi - \sum_{i=0}^{I-1} c_i \bold{1}_{[a_i(t_2), a_{i+1}(t_2))}\right| \\
&\le&  \sup_x \sup_{|\xi| \le \sup(u)(t_2-t_1)}|\phi(x) - \phi(x+ \xi)| + \left|M_{t_1t_2} \left(\phi - \sum_{i=0}^{I-1} c_i \bold{1}_{[a_i(t_1), a_{i+1}(t_1))}\right)\right| \\
&\le& Lip(\phi) \sup(u) (t_2 - t_1)  + \epsilon.
\end{eqnarray*}
Combining these estimates we obtain 
\begin{eqnarray*}
\int_{\mathbb{R}} \phi u_x^+ (t_2,z)^2 dz  &\ge&  \int_{\mathbb{R}} \phi u_x^+(t_1,\gamma)^2 d\gamma  - \epsilon \int_{\mathbb{R}} u_x^2 (t_1,\gamma) d \gamma  \\
 && -(t_2 - t_1) \left(K(a_I(t_1) - a_0(t_1)) + \int_{[a_0(t_1),a_I(t_1)]} u_x(t_1,\gamma)^2 d\gamma \right)  \\
 && - (Lip(\phi) \sup(u) (t_2 - t_1)  + \epsilon ) \int_{{\rm supp}(\phi) + [-(t_2-t_1)\sup(u),(t_2-t_1)\sup(u)]} u_x^2(t_2,z)dz.
 \end{eqnarray*}
Passing to the limit $\epsilon \to 0$ we are left with
\begin{eqnarray*}
\int_{\mathbb{R}} \phi u_x^+ (t_2,z)^2 dz  &\ge&  \int_{\mathbb{R}} \phi u_x^+(t_1,\gamma)^2 d\gamma\\
 && -(t_2 - t_1) \left(K(a_I(t_1) - a_0(t_1)) + \int_{[a_0(t_1),a_I(t_1)]} u_x(t_1,\gamma)^2 d\gamma \right)  \\
 && - (Lip(\phi) \sup(u) (t_2 - t_1)   ) \int_{\mathbb{R}} u_x^2(t_2,z)dz,
 \end{eqnarray*}
 where $K$ is defined in Section \ref{Sec_Th9}, see inequalities \eqref{Eq_minus2eps} and \eqref{Eq46}. 
This allows us to use Lemma \ref{Lem_limit} to conclude that function 
$t \mapsto \int_{\mathbb{R}} \phi u_x^+(t,z)^2 dz$ has locally bounded variation and, as a consequence, the existence of its right limits.

\end{document}